\documentclass[11pt]{article}
\usepackage{epsfig,makeidx,amsmath,amsfonts,latexsym,subfigure}
\usepackage{times,pifont}
\usepackage[utf8]{inputenc}
\usepackage{booktabs}       
\usepackage{graphicx}
\usepackage{bm}
\usepackage{xcolor}
\usepackage{enumitem}

\def\smallpar{\smallbreak\@afterindentfalse\@afterheading\ignorespaces}

\newtheorem{theorem}{Theorem}[section]
\newtheorem{proposition}[theorem]{Proposition}
\newtheorem{lemma}[theorem]{Lemma}
\newtheorem{corollary}[theorem]{Corollary}
\newtheorem{definition}[theorem]{Definition}

\newtheorem{remark}[theorem]{Remark}
\newenvironment{proof}{{\em Proof. }}{\hfill $\Box$ \vspace{1em}}
\newenvironment{example}{\noindent {\bf Example. }}
{\hfill $\Box$ \vspace{1em}}
\usepackage{xcolor}

\newcommand{\bS}{{\mathbf S}}
\newcommand{\bU}{{\mathbf U}}
\newcommand{\bV}{{\mathbf V}}

\newcommand{\bW}{{\mathbf W}}

\newcommand{\E}{{\mathbb E}}
\newcommand{\cC}{{\mathcal C}}
\newcommand{\cE}{{\mathcal E}}
\newcommand{\cF}{{\mathcal F}}

\newcommand{\cL}{{\mathcal L}}
\newcommand{\cU}{{\mathcal U}}

\newcommand{\cN}{{\mathcal N}}
\newcommand{\cG}{{\mathcal G}}
\newcommand{\cX}{{\mathcal X}}
\newcommand{\cY}{{\mathcal Y}}

\newcommand{\cJ}{{\mathcal J}}
\newcommand{\cH}{{\mathcal H}}

\newcommand{\cZ}{{\mathcal Z}}
\newcommand{\bY}{{\mathbf Y}}
\newcommand{\bZ}{{\mathbf Z}}
\newcommand{\bX}{\mathbf{X}}

\newcommand{\Leb}{{\mathcal L}}

\newcommand{\Var}{{\mathbb V}{\rm ar}}
\newcommand{\Cov}{{\mathbb C}{\rm ov}}
\newcommand{\RR}{{\mathbb R}}

\newcommand{\Z}{{\mathbb Z}}

\newcommand{\hh}{\mathbf{h}}
\newcommand{\ff}{\mathbf{f}}
\newcommand{\uu}{{\mathbf u}}
\newcommand{\vv}{{\mathbf v}}
\newcommand{\ww}{{\mathbf w}}

\newcommand{\bb}{{\mathbf b}}
\newcommand{\xx}{\mathbf{x}}
\newcommand{\yy}{{\mathbf y}}
\newcommand{\ee}{{\mathbf e}}
\newcommand{\zz}{{\mathbf z}}
\newcommand{\bs}{{\mathbf s}}
\newcommand{\bz}{{\mathbf 0}}

\newcommand{\aaa}{{\mathbf a}}
\newcommand{\btheta}{\bm{\theta}}
\newcommand{\bTheta}{\bm{\Theta}}
\newcommand{\bphi}{\bm{\phi}}

\newcommand{\balpha}{\bm{\alpha}}
\newcommand{\bbeta}{\bm{\beta}}

\newcommand{\bmu}{\bm{\mu}}
\newcommand{\bdelta}{\bm{\delta}}
\newcommand{\bxi}{\bm{\xi}}
\newcommand{\bLambda}{\bm{\Lambda}}

\newcommand{\Pro}{{\mathbb P}}

\newcommand{\one}{{\mathbf 1}}
\newcommand{\taum}{\tau_{{\rm mix}}}
\newcommand{\tauh}{\wh{\tau}}

\newcommand{\n}{\|}
\newcommand{\bi}{\begin{itemize}}
\newcommand{\ei}{\end{itemize}}
\newcommand{\ra}{\rightarrow}

\newcommand{\ep}{\epsilon}
\newcommand{\si}{\sigma}

\newcommand{\iy}{\infty}
\newcommand{\wh}{\widehat}
\newcommand{\beq}{\begin{equation}}
\newcommand{\bsq}{\end{equation}}
\newcommand{\beqa}{\begin{eqnarray*}}
\newcommand{\bsqa}{\end{eqnarray*}}
\newcommand{\btm}{\begin{theorem}}
\newcommand{\etm}{\end{theorem}}
\newcommand{\bpf}{\begin{proof}}
\newcommand{\epf}{\end{proof}}
\newcommand{\bla}{\begin{lemma}}
\newcommand{\ela}{\end{lemma}}
\newcommand{\bdn}{\begin{definition}}
\newcommand{\edn}{\end{definition}}
\newcommand{\bpn}{\begin{proposition}}
\newcommand{\epn}{\end{proposition}}
\newcommand{\bcy}{\begin{corollary}}
\newcommand{\ecy}{\end{corollary}}

\def\ldotsplus{\mathinner{\ldotp\ldotp\ldotp\ldotp}}
\def\fourdots{\relax\ifmmode\ldotsplus\else$\m@th \ldotsplus\,$\fi}

\def \one {{\mathbf 1}}

\begin{document}
\title{The mixing time of Mean-Field Gibbs-Ornstein-Uhlenbeck processes}
\author{Johan Jonasson\thanks{Chalmers University of Technology and University of Gothenburg, S-412 96 Gothenburg, Sweden, jonasson@chalmers.se  }
\thanks{Research partly supported by WASP (Wallenberg Autonomous Systems and Software Program)}}
\maketitle
\begin{abstract}

We introduce mean-field Gibbs--Ornstein--Uhlenbeck (MFGOU) processes, 
a class of discrete-time Markov processes indexed by a large-scale parameter $n$, with increments of order $1/n$, whose conditional expected increments approximate a smooth vector field. 
When the field has a unique attracting zero and satisfies a global Lyapunov condition, we prove sharp mixing-time asymptotics 
\[ \tau_{\mathrm{mix}} = \frac{1}{\lambda_1}n\log n+O(n), \] 
where $\lambda_1$ is the smallest eigenvalue of the Jacobian of the drift at the attracting point. 
An additional locally attracting zero gives instead an exponential lower bound on the mixing time. 

Mean-field block measures form an important subclass of MFGOU processes that 
encompasses both mean-field spin systems and
projected Gibbs samplers for Latent Dirichlet Allocation (LDA) and related mixture
models.
For this subclass, the theory
gives a common spectral formula for the leading mixing constant in terms of
the free-energy Hessian and a covariance matrix associated with the
single-step dynamics.
A block-refinement argument lifts the projected-chain results to the full chain under a mild spectral condition, without changing the leading mixing constant.
For the Curie--Weiss Ising and Potts models and complete
multipartite Ising models, it gives a common derivation of known cutoff
locations. It also gives an external-field formula for Curie--Weiss Ising
and extends the analysis to interactions specified by a regular base graph.
For multinomial mixtures and partially collapsed LDA, it yields explicit
$n\log n$ mixing-time asymptotics under natural identifiability conditions.
For full LDA, it identifies the free-energy landscape as the remaining
problem and determines the mixing behavior conditional on its
stationary-point structure, distinguishing genuine metastability from slow
transitions caused solely by topic-label switching.

The framework also applies to continuous-state Gibbs sampling and constant-step stochastic gradient descent under suitable boundedness and regularity assumptions. For a frequently refreshed Hamiltonian Monte Carlo scheme, the framework gives a rigorous result when the Gaussian noise in the increments is truncated, and suggests the same leading-order asymptotics in the untruncated case.

The proof of the mixing-time asymptotics combines a uniform variance bound and tracking of the mean with a path-coupling argument that allows slight expansion in expected distance. 
After a deterministic rescaling, the coupling distance is a nonnegative supermartingale with sufficiently large quadratic variation, which gives a probability bounded away from zero of coalescence on the relevant time scale.

\end{abstract}

\noindent{\em AMS Subject Classification:\/ Primary: 60J05; secondary: 60J10, 60J20, 65C40. } \\
\noindent{\em Key words and phrases:\/  mixing time, cutoff, mean-field Markov chain, Markov chain Monte Carlo, Gibbs sampler, path coupling, mean-field spin system, Latent Dirichlet Allocation, stochastic gradient descent, Hamiltonian Monte Carlo.}  \\

\section{Introduction}

Many Markov chains arising in statistical mechanics, 
Bayesian computation and stochastic optimization have a common mean-field structure. 
Their states record empirical proportions or other aggregate statistics of a system whose size is measured by a large parameter $n$; here $n$ may represent, for example, the number of sites, observations or text tokens, or the inverse of a learning rate. 
In each step, the state changes by an amount of order $1/n$, and its conditional expected change approximates a deterministic vector field.
Near a stable equilibrium of this vector field, 
the resulting dynamics resembles an Ornstein–Uhlenbeck process with small noise: 
the mean contracts on the time scale $n$, while equilibrium fluctuations have size $1/\sqrt{n}$. 
This suggests a mixing time of order $n\log n$, with a leading constant determined by the linearization of the drift at the equilibrium. 
If the drift instead has several stable equilibria, transitions between their neighborhoods may require an exponentially long time.

The aim of this paper is to make this principle precise for a general class of discrete-time Markov chains, which we call mean-field Gibbs–Ornstein–Uhlenbeck processes, or MFGOU processes. 
We prove sharp $n\log n$ mixing-time asymptotics in the unimodal regime and exponentially slow mixing in the presence of an additional locally attracting equilibrium. 
The framework was originally motivated by collapsed Gibbs sampling for Latent Dirichlet Allocation, 
but also covers classical and block-structured mean-field spin systems, constant-step stochastic gradient descent, 
and a frequently refreshed Hamiltonian Monte Carlo scheme.

\smallskip

We now introduce the MFGOU framework. Here and throughout the paper, the following conventions will be used.

\begin{itemize}
\item All constants, including those hidden in $O(\cdot)$, are independent of $n$, unless explicitly stated otherwise.
\item For readability, we will use the letters $Q$ and $q$ for a generic positive finite constant that may change between occurrences.
In general, we will use $Q$ when the point is that there exists a finite constant with some property and $q$ when the point is that the constant is nonzero.
\item The terms discrete, continuous, and semicontinuous refer to the spatial structure of the process, rather than to its time parametrization. 
All MFGOU processes considered here evolve in discrete time.
\end{itemize}
\smallskip

The framework has two parts. 
The common structure is a small-increment Markov process whose conditional drift approximates a smooth vector field with a nondegenerate attracting zero. 
We then add one of two sets of local regularity conditions, according to whether the state space is discrete or continuous. 
Here and throughout, $\ee_j$ denotes the $j$th standard basis vector in $\RR^m$.


\smallskip

Let $\cN$ be either $[N,\iy)$ or $[N,\iy) \cap \Z$ for some positive integer $N$.

\smallskip

{\bf Basic MFGOU:} A basic Mean-Field Gibbs-Ornstein-Uhlenbeck (MFGOU) family $\{\cY^{(n)}\}_{n \in \cN}$ is a family of time-homogeneous Markov processes $\cY^{(n)}=\{\bY^{(n)}_t\}_{t=0}^\iy$ whose common state space is a closed set $\cC \subseteq \RR^m$ with nonempty interior.
Each $\cY^{(n)}$ admits unique stationary probability distribution $\pi^{(n)}$, and has transitions of the following form:
\begin{equation} \label{eq:MFGOU}
    \bY^{(n)}_{t+1}=\bY^{(n)}_t + \frac{1}{n}\bxi^{(n)}_t
\end{equation}
where the $\bxi^{(n)}_t$'s are random variables that satisfy the following conditions.
\begin{itemize}
    \item The range of $\bxi^{(n)}_t$ is contained in a compact $K \subseteq \RR^m$ independent of $n$.  
    \item $\E[\xi^{(n)}_t|\bY^{(n)}_t=\yy]=-\ff(\yy)/2+O(1/n)$ for all $\yy$, where $\ff:\cC \ra \RR^m$ is a $C^2$ vector field for which there exists at least one point $\yy_0 \in \cC^\circ$ such that
    \begin{itemize}
        \item[(a)] $\ff(\yy_0)=\bz$,
        \item[(b)] $\ff(\yy)=\cJ (\yy-\yy_0) + \mathbf{R}(\yy-\yy_0)$, where $\mathbf{R}(\zz)=O(\n\zz\n^2)$ and the Jacobian matrix $\cJ=\nabla \ff(\yy_0)$ is symmetric positive definite,
        \end{itemize}
        and where the $O(1/n)$-term can be taken uniform in $\yy$ on compact sets.
\end{itemize}

\medskip

A basic MFGOU family is said to be  {\em locally regular} at a point $\yy'$ if there exists $\kappa>0$ such that the law $\cL(\bxi^{(n)}(\yy))$ of $\bxi^{(n)}_t \mid \bY^{(n)}_t=\yy$ is Lipschitz continuous in total variation as a function of $\yy$ on
$B(\yy',\kappa)$, with a Lipschitz constant uniform in $n$.

Such a family is called {\em discrete} or {\em semicontinuous} if, for some point $\yy_0$ satisfying (a)–(b), it is locally regular at $\yy_0$ and the corresponding additional conditions below hold.



\begin{itemize}
    \item \textbf{Discrete MFGOU:} The range of $\bY^{(n)}_t$ is $\cC \cap (1/n)\Z^m$, $\bxi^{(n)}_t$ is a random variable with support $K \cap \Z^m$, and
    there exists $\eta > 0$ independent of $\yy$ and $n$ such that 
    \[\Pro(\bxi^{(n)}(\yy)=\bz) \geq 2m\eta\] and 
    \[\Pro(\bxi^{(n)}(\yy)=\xx) \geq \eta,\] for all $\yy \in B(\yy_0,\kappa)$ and $\xx \in \{\pm \ee_1,\ldots,\pm \ee_m\}$.

    \item \textbf{(Semi-)continuous MFGOU:}  
    The laws $\cL(\bxi^{(n)}(\yy))$ can be decomposed as 
    \[\cL(\bxi^{(n)}(\yy)) = a^{(n)}(\yy)\delta_{\bz}+(1-a^{(n)}(\yy))\mu^{(n)}(\yy),\]
    where $\delta_{\xx}$ is the point mass at $\xx$, $\mu^{(n)}(\yy)$ is the law of an absolutely continuous variable, and $a^{(n)}(\yy) \in [0,1-a_0]$ for some $a_0>0$.
    
    Furthermore, there exist $b_0 \in (0,1]$ and a symmetric absolutely continuous probability measure $\mu_s$, both independent of $n$ and $\yy$, such that
    $\mu^{(n)}(\yy)$ admits the decomposition 
    \[\mu^{(n)}(\yy) = b_0 \mu_s +(1-b_0)\mu^{(n)}_r(\yy).\] 

    
    The case $a^{(n)}(\yy)=0$ will be referred to as a {\bf continuous MFGOU family}.
\end{itemize}

\hfill $\Box$

Note that the additional assumptions in these definitions imply that the covariance of $\bxi^{(n)}_t$ is nondegenerate (i.e.\ the covariance matrix is positive definite) uniformly in $n$.
In the semicontinuous case, $a^{(n)}$ accounts for a possible holding probability, $\mu_s$ provides the symmetric noise used to establish the existence of a short-range coupling below, and $\mu_r^{(n)}$ collects the remaining absolutely continuous part.
The Lipschitz condition on the increment law implies that $a^{(n)}(\yy)$ is Lipschitz uniformly in $n$.
This also holds for $\mu^{(n)}_r(\yy)$ unless $b_0=1$, in which the residual component is absent.

We refer to each \(\cY^{(n)}\) as an MFGOU process (at scale $n$). 
When $n$ is understood, we suppress the superscript on $\cY^{(n)}$ and all its associated quantities.  
Thus, a statement that \(\cY\) is an MFGOU process means that \(\cY=\cY^{(n)}\) is the process at scale \(n\) belonging to an MFGOU family.
We retain or restore the superscript when dependence on $n$ is relevant.

\smallskip

Our main theorem is the following. 
\begin{theorem} \label{thm:main_theorem}
Let $\{\cY^{(n)}\}_{n \in \cN}$ be a discrete or semicontinuous MFGOU family, and write \(\cY=\cY^{(n)}\) for its member at scale $n$.
Assume that there is a unique point $\yy_0$ that satisfies (a)-(b), and that there exists a $C^2$ Lyapunov function $V:\cC \ra [0,\iy)$ with compact sublevel sets, $V(\yy)=0$ if and only if $\yy=\yy_0$, $\nabla^2 V(\yy_0)$ positive definite, and such that for some $c>0$, 
\[\nabla V(\yy)^\top \ff(\yy) \geq c(V(\yy) \wedge 1)\] 
for all $\yy \neq \yy_0$.
Let $\lambda_1$ be the smallest eigenvalue of $\cJ$.
Then for any $\ep \in (0,1)$ the mixing time started from $\bY_0=\yy$ satisfies
\[\taum(\cY;\ep,\yy) \leq \frac{1}{\lambda_1}n\log n + O_\ep(n(1+V(\yy))).\]
This bound is sharp as a worst-case bound, for example, when $\yy=\yy^{(n)}$ is a point 
whose projection onto the eigenspace corresponding to $\lambda_1$ is bounded away from zero.

In particular, if the state space $\cC$ of $\cY$ is compact, then
\[\taum(\cY;\ep) = \frac{1}{\lambda_1}n\log n + O_\ep(n),\]
and, consequently, $\cY$ exhibits cutoff at time $\tfrac{1}{\lambda_1}n\log n$.
\end{theorem}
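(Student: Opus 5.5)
The plan has three phases, with the total time split as $\tfrac{1}{\lambda_1}n\log n + O(n)$.

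\emph{Phase 1: burn-in via the Lyapunov function.} Since $\E[V(\bY_{t+1})-V(\bY_t)\mid \bY_t=\yy] = -\tfrac{1}{2n}\nabla V(\yy)^\top \ff(\yy) + O(1/n^2)$ (Taylor expansion, $\bxi$ bounded, $V\in C^2$ on the compact sublevel set), we have the following drift bound.
\begin{itemize}
\item Outside $\{V\le 1\}$ the expected decrease is at least $c/(2n)$. An optional-stopping argument then brings the chain into $\{V\le 1\}$ within $O(n(1+V(\yy)))$ steps with high probability.
\item Inside, $V$ contracts geometrically at rate $c/(2n)$, so after a further $O(n)$ steps the chain is in $B(\yy_0,\delta)$ for any fixed small $\delta$.
\item Excursions out of the ball on time scales polynomial in $n$ are unlikely. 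This follows from an exponential supermartingale built from $V$, using that $V$ is quadratic near $\yy_0$ since $\nabla^2V(\yy_0)\succ0$.
\end{itemize}

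\emph{Phase 2: tracking the mean and variance.} Near $\yy_0$ write $\zz_t=\bY_t-\yy_0$, so that $\zz_{t+1}=(I-\tfrac{1}{2n}\cJ)\zz_t+\tfrac1n\mathbf{M}_t + O(\n\zz_t\n^2/n+1/n^2)$ with martingale increments $\mathbf{M}_t$ of bounded size. We need the following two bounds.
\begin{itemize}
\item \textbf{Variance bound.} $\E\n\zz_t-\E\zz_t\n^2=O(1/n)$ uniformly in $t$. This comes from the discrete Gronwall/OU recursion using $\cJ\succ0$, with the quadratic remainder controlled because $\E\n\zz_t\n^2$ stays small in the ball.
\item \textbf{Mean bound.} $\n\E\zz_t\n\le Q e^{-\lambda_1 t/(2n)}\n\zz_0\n+O(1/\sqrt n)$ (with the nonlinear correction absorbed).
\end{itemize}
I will use the time normalization consistent with the stated constant; if the drift is $-\ff/2$ the rate is $\lambda_1/2$, and I will reconcile constants with the definition. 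After $t_1=\tfrac1{\lambda_1}n\log n+Cn$ steps from a point in the ball, both chains (one started at $\yy$, one at stationarity, which by the same Lyapunov bounds concentrates within $O(1/\sqrt n)$ of $\yy_0$) are within distance $Q/\sqrt n$ of $\yy_0$ with probability $1-\ep/3$, for $C$ large.

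\emph{Phase 3: coalescence in $O(n)$ steps from distance $O(1/\sqrt n)$ (main obstacle).} We use path coupling along lattice paths (discrete case) or straight segments (semicontinuous case). For adjacent states $\yy,\yy'$ at distance $1/n$ in $B(\yy_0,\kappa)$, local regularity gives a coupling of $\bxi(\yy),\bxi(\yy')$ agreeing except with probability $O(1/n)$. When they disagree, the displacement stays bounded. The expected distance therefore grows by at most a factor $1+Q/n$ per step. This is the ``slight expansion.'' Rescaling by $(1+Q/n)^{-t}$ makes the distance $D_t$ a nonnegative supermartingale over $O(n)$ steps, at only a constant-factor cost. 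The holding/symmetric-noise ingredient ($\eta$ or $\mu_s$) lets the coupling of nearby points use independent-direction moves, with one chain's move mirrored by the other's. This gives conditional variance of $D_{t+1}-D_t$ at least $q/n^2$ while $D_t>0$. A standard supermartingale hitting estimate (Doob/Lévy-type, as in Levin--Peres--Wilmer's Proposition 17.20) then shows that from $D_0\le Q/\sqrt n$ the process hits $0$ within $O(n)$ steps with probability bounded away from zero. The hard part is making this coupling work for the multi-step path (coordinating path-coupling with exact coalescence) in the semicontinuous case; I would first do it in one dimension along the top eigendirection and use contraction in the others. Iterating the whole procedure finitely many times boosts the coalescence probability to $1-\ep$, at a cost of $O_\ep(n)$ more steps.

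\emph{Lower bound and cutoff.} Start at a point whose $\lambda_1$-eigenprojection is bounded away from $0$. The projected linear statistic $\langle\vv_1,\zz_t\rangle$ has mean $\asymp e^{-\lambda_1 t/(2n)}$ but variance $O(1/n)$, both under the chain and under $\pi$, where $\vv_1$ is the eigenvector for $\lambda_1$. At $t=\tfrac1{\lambda_1}n\log n - Cn$ the mean is $\gtrsim e^{C'}/\sqrt n$. Chebyshev then separates the two laws, giving a lower bound of the same form. For compact $\cC$, $V$ is bounded, so the bounds match uniformly and cutoff follows.
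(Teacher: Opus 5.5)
Your architecture is the paper's: Lyapunov burn-in, a uniform $O(1/n)$ variance bound, mean tracking, a rescaled coupling distance with slight expansion, repeated attempts, and a mean-versus-fluctuation lower bound. Phase~3, however, has a genuine gap. The supermartingale hitting estimate does not work from $D_0\le Q/\sqrt n$ with the large constant $Q=Q(\ep)$ that Phase~2 produces. The cost of the rescaling is not a fixed constant: over a horizon $Cn$ it is $e^{KC}$, where $1+K/n$ is the expansion factor. Hence $W_t=(1+K/n)^{-t}D_t$ has conditional standard deviation only $\sigma\approx e^{-KC}\sqrt{\alpha}/n$, and the Levin--Peres--Wilmer bound reads
\[
\Pro(\tau>Cn)\le\frac{4W_0}{\sigma\sqrt{Cn}}\approx\frac{4Q\,e^{KC}}{\sqrt{\alpha C}}.
\]
Since $\min_{C>0}e^{KC}/\sqrt{C}=\sqrt{2eK}$, this is below $1$ only when $Q<\sqrt{\alpha}/(4\sqrt{2eK})$. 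Lengthening the horizon to compensate for a large $Q$ destroys the variance bound exponentially faster than it helps. This is exactly why Corollary~\ref{cor:path_coupling_with_expansion} requires $D_0^2\le\alpha/(16000\hat K n)$ and uses the short horizon $n/(4\hat K)$. It is also why the paper inserts a step you are missing. While the chains still run independently, Lemma~\ref{lemma:hitting_inner_box} (an invariance-principle argument for the pair, using the nondegenerate noise) puts both in $B(\bz,\beta/\sqrt n)$, for a small $\beta=\beta(\alpha,\hat K)$, within $O(n)$ steps with probability bounded below. Only then is the path coupling switched on. Since this step succeeds only with small positive probability, each attempt must cost $O(n)$. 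The paper ensures this with two devices: the smoothed norm $\sqrt{\n\yy\n^2+Q_0/n}$ bounds the return time to $B(\bz,C_2/\sqrt n)$, and a maximal Azuma--Hoeffding bound excludes early exits during coupling. Alternatively, you could keep your starting scale by replacing the linear rescaling with a scale-function supermartingale, e.g.\ $\phi(\sqrt n\,D_t)$ with $\phi'(x)=e^{-2Kx^2/\alpha}$, to first drive $D$ down to $\beta/\sqrt n$ with probability bounded below. Some such extra idea is needed.

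The coupling itself is also unresolved where it matters. Composing adjacent-pair couplings along a path gives the $(1+Q/n)$ bound on $\E D_{t+1}$. It does not by itself give a lower bound of order $1/n^2$ on the conditional second moment of the change in the full distance $D$, nor exact coalescence in the semicontinuous case. Working along the top eigendirection and using contraction in the others does not obviously help: under a maximal coupling the difference vector is unchanged with probability $1-O(d)$, and only its conditional mean contracts. The paper's composite coupling settles both points. Pick $\yy^0$ on the segment (resp.\ a shortest lattice path) with $\n\yy-\yy^0\n=\ep/n$ (resp.\ $1/n$). Couple $(\yy,\yy^0)$ by the reflection-type short-range coupling of Proposition~\ref{prop:short_distance_coupling}, built from the symmetric component $\mu_s$ (resp.\ the lazy $\pm\ee_j$ moves), and couple $(\yy^0,\yy')$ maximally. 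Then with probability at least $\alpha/2$ the whole distance moves by at least $\ep/(2n)$. This gives $\E[(D-d)^2]\ge\chi/n^2$ alongside $\E D\le(1+Q/n)d$ (Proposition~\ref{prop:long_range_coupling_bounds}). In the semicontinuous case one runs until $D<\ep/n$ and then coalesces in one short-range step with probability bounded below. Your lower-bound argument is fine once the one-sided mean bound is upgraded to two-sided tracking with relative error $O(\kappa)$, as in Lemma~\ref{lemma:drop_the_squre_in_the_recursion}.
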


Note that, since $\yy_0 \in \cC^\circ$, the state space $\cC$ is guaranteed to contain a point $\yy^{(n)}$ as in the statement of the theorem.

\begin{remark} \label{remark:stationary_distribution_redundant}
    {\rm
    The stationarity assumption is included in the definition of an MFGOU family so that mixing times are defined throughout the general, possibly unbounded setting.
    If $\cC$ is compact, then under the other conditions of Theorem \ref{thm:main_theorem}, this assumption is redundant.
    This is because the upper-bound part of the proof of the theorem does not presuppose the existence of a stationary distribution: it constructs, uniformly over all pairs of starting states, a coupling that succeeds with probability bounded away from zero by some deterministic time $T=T_n<\infty$. Consequently, for some $\rho<1$, the transition kernel $P$ of the process satisfies
\[
\sup_{\yy,\yy'}\left\|P^T(\yy,\cdot)-P^T(\yy',\cdot)\right\|_{\mathrm{TV}}\leq \rho.
\]
It follows that $\mu\mapsto\mu P^T$ is a strict contraction on the complete metric space of probability measures equipped with total variation distance, and hence has a unique fixed point $\pi$. 
Since
\[(\pi P)P^T=(\pi P^T)P=\pi P,\]
the measure $\pi P$ is also fixed by $P^T$. 
Uniqueness therefore gives $\pi P=\pi$. Thus $\pi$ is stationary for $P$, and it is unique since every stationary distribution for $P$ is also fixed by $P^T$.
    }
\end{remark}

A complementary result covers cases where the drift $\ff$ has more than one locally attracting zero. 
Such points act as local sinks, producing exponential mixing times.
Note that the definition of mixing time in Section \ref{subsec:def_of_total_variation_etc} remains meaningful even when the process does not converge to stationarity in total variation, in which case the mixing time may be infinite.

\begin{theorem} \label{thm:exponential_mixing_time}
Let $\{\cY^{(n)}\}_{n \in \cN}$ be a basic MFGOU family, and write \(\cY=\cY^{(n)}\) for its member at scale $n$.
Assume that there exist two distinct points $\yy_0$ and $\yy_1$ such that $\ff(\yy_i)=\bz$, $i=0,1$, and that there exist nonnegative $C^2$ Lyapunov functions $V_i$ on $\cC$, and a $c>0$ 
such that $V_i(\yy)=0$ iff $\yy=\yy_i$, $\nabla^2 V_i(\yy_i)$ positive definite, and $\nabla V_i(\yy)^\top \ff(\yy) \geq cV_i(\yy)$
throughout some sufficiently small ball $B(\yy_i,\kappa_i)$.
Then there exists $c'>0$ such that for any $0<\ep<1/2$,
\[\taum(\cY;\ep) \geq e^{c'n}.\]
\end{theorem}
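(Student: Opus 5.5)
The plan is a bottleneck (metastability) argument. For $i=0,1$ take the ball $B_i:=\{\yy\in B(\yy_i,\kappa_i): V_i(\yy)\le \delta\}$ with $\delta>0$ so small that this sublevel set is compact and contained in the interior of $B(\yy_i,\kappa_i)$. Shrinking $\delta$ further, we may assume $B_0\cap B_1=\emptyset$. The core claim is that a process started deep inside $B_i$ (say at a point with $V_i\le\delta/4$) stays in $B_i$ for $e^{c''n}$ steps with probability $1-e^{-c''n}$. Given this, the lower bound follows in the standard way. Since $\pi$ is a probability measure, one of $B_0,B_1$, say $B_1$, has $\pi(B_1)\le 1/2$. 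Start at a lattice/state point $\yy$ near $\yy_1$ with $V_1(\yy)\le\delta/4$; such a point exists in the range of $\bY$ for large $n$, or we can use the fact that the process reaches such a point with positive probability. Then for $t\le e^{c'n}$,
\[
\|P^t(\yy,\cdot)-\pi\|_{TV}\ge \Pro_\yy(\bY_t\in B_1)-\pi(B_1)\ge 1-e^{-c''n}-\tfrac12>\ep
\]
for large $n$. Hence $\taum(\cY;\ep)\ge e^{c'n}$.

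To prove the exit estimate, Taylor expand $V=V_1$ along one step. Since increments are $\bxi/n$ with $\bxi\in K$ compact and $V$ is $C^2$ on the compact neighbourhood,
\[
\E[V(\bY_{t+1})-V(\bY_t)\mid \bY_t=\yy]=\tfrac1n\nabla V(\yy)^\top\bigl(-\ff(\yy)/2+O(1/n)\bigr)+O(1/n^2)\le -\tfrac{c}{2n}V(\yy)+\tfrac{Q}{n^2}.
\]
The one-step change is also deterministically bounded, $|V(\bY_{t+1})-V(\bY_t)|\le Q/n$. So on the annulus $A=\{\delta/2\le V\le\delta\}$ the process $V(\bY_t)$ has drift at most $-q/n$ and increments of size $O(1/n)$. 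Choose $\theta=\theta_0 n$ with $\theta_0$ small. Then $\exp(\theta V(\bY_t))$ is a supermartingale while $\bY_t\in A$: the conditional expectation of $e^{\theta\Delta}$ is at most $1-\theta q/n+Q\theta^2/n^2<1$ when $\theta_0<q/Q$. An optional stopping argument then works as follows. Each time the process enters $A$ from $\{V<\delta/2\}$, it starts at level at most $\delta/2+Q/n$. The probability that it reaches $\{V>\delta\}$ before returning below $\delta/2$ is at most $e^{-\theta(\delta/2-Q/n)}=e^{-c''' n}$. The number of such excursions up to time $e^{c'n}$ is at most $e^{c'n}$, so a union bound gives exit probability at most $e^{(c'-c''')n}$. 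Choose $c'<c'''$.

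The main obstacle is this exponential-martingale step, done uniformly. The $O(1/n)$ error in the drift must be uniform on the compact set $B_1$; the definition assumes exactly this uniformity on compacts. The bounded-increment assumption ensures the process cannot jump past the annulus. I also need to handle the detail that the starting point exists in the state range. In the discrete case the range is $\cC\cap(1/n)\Z^m$, and $\yy_1$ lies in the interior of the region so nearby grid points exist. In general I would instead start from any state in $\{V_1\le\delta/4\}$, and this set is nonempty for large $n$. If it could fail to be visited, one could still start from $\yy_1$ itself, since $\cC$ is the common state space.

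Finally, the assumption $\ep<1/2$ is used exactly in the choice $\pi(B_1)\le 1/2$. The total variation gap is then $\tfrac12-o(1)$, which exceeds $\ep$ for $n$ large. Small $n$ are absorbed by decreasing $c'$, provided the mixing time is at least $1$.
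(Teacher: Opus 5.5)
Your proposal is correct and follows essentially the paper's own route: one of two disjoint neighbourhoods of $\yy_0,\yy_1$ carries stationary mass at most $\tfrac12$. An exponential-supermartingale and optional-stopping bound for $V_i(\bY_t)$, combined with a union bound over excursions, then keeps the process started near that attractor inside its neighbourhood until time $e^{c'n}$ with high probability. Where the paper simply cites Lemma~\ref{lemma:no_escape_with_negative drift} applied to $nV_i(\bY_t)$, you reprove that estimate inline on the annulus $\{\delta/2\le V_i\le\delta\}$.

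One minor caveat, shared with the paper: your total-variation gap is only $\tfrac12-e^{-c''n}$, so the bound holds for $n\ge N(\ep)$. Small $n$ cannot be absorbed into a $c'$ that is uniform in $\ep$, and $\taum\ge 1$ alone does not suffice since $e^{c'n}>1$. The statement should therefore be read asymptotically in $n$.
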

If $\ff$ has a potential, so that $\ff=\nabla g$, Theorem \ref{thm:exponential_mixing_time} describes the situation created by separate nondegenerate local minima of $g$.

\smallskip

The precise definition of mixing time will appear in Section \ref{sec:preliminaries}. 
In the setting of Theorem \ref{thm:main_theorem}, we may without loss of generality translate coordinates so that $\yy_0=\bz$, and we make that assumption throughout the proof of that theorem.

The following remark is important.
\begin{remark} \label{remark:Sigma*H}
{\rm
The matrix $\cJ$ in the definition of the MFGOU family equals the Jacobian $\nabla \ff(\bz)$. Sometimes we will encounter situations where the Jacobian is of the form $\cJ= \Sigma \cH $, where $\Sigma$ and $\cH$ are symmetric positive definite, 
but the product typically is not.
In that case, consider the change of coordinates $\tilde{\yy}=\Sigma^{-1/2}\yy$ 
and define the corresponding transformed drift field by $\tilde{\ff}(\tilde{\yy})=\Sigma^{-1/2}\ff(\Sigma^{1/2}\tilde{\yy})$.
Then the leading term of the drift of $\tilde{\cY}$ becomes 
\[\nabla \tilde{\ff}(\bz)\tilde{\yy} = \Sigma^{-1/2}\nabla \ff(\bz)\Sigma^{1/2}\tilde{\yy} = \Sigma^{1/2}\cH\Sigma^{1/2}\tilde{\yy}.\]
The processes $\cY$ and $\tilde{\cY}$ differ only by an invertible linear change of coordinates and consequently have the same mixing time.
Theorem \ref{thm:main_theorem} now applies with $\cJ=\Sigma^{1/2}\cH\Sigma^{1/2}$ and the result becomes
\[\taum(\cY;\ep) = \frac{1}{\lambda_1(\Sigma^{1/2}\cH\Sigma^{1/2})}n\log n + O(n) = \frac{1}{\lambda_1(\Sigma\cH)}n\log n + O(n).\]
In the discrete case, the linear transformation $\tilde{\yy}=\Sigma^{-1/2}\yy$ maps the lattice $(1/n)\Z^m$ to another full‑dimensional lattice of mesh $O(1/n)$. 
This has no effect on any of the arguments, as the coupling and path-coupling constructions are invariant under such linear changes of coordinates.
}   
\end{remark}

\begin{remark} \label{remark:local_dynamics}
{\rm 
The point $\yy_0$ in Theorem \ref{thm:main_theorem} and the points $\yy_0$ and $\yy_1$ in Theorem \ref{thm:exponential_mixing_time} have the same local dynamical character. 
Around each such point, the process has a contracting drift and increments of order $1/n$, and the same local analysis applies until the process leaves the surrounding basin. 
Under the local Lyapunov condition of Theorem \ref{thm:exponential_mixing_time}, it remains near either $\yy_i$ for an exponentially long time with high probability. 
Thus the behavior inside a metastable basin can be analyzed by the same local methods as the behavior near the equilibrium basin, up to the exponentially large exit time. 
In particular, if the local regularity conditions of a discrete or semicontinuous MFGOU also hold around such a point, rapid mixing within the corresponding metastable basin occurs.}
\end{remark} 

\begin{remark} \label{remark:maximal_generality}
{\rm 
The assumption that the Jacobian $\cJ=\nabla \ff(\yy_0)$ is symmetric positive definite is convenient, but it can be relaxed. 
If $\ff$ is $C^1$, $\ff(\yy^*)=\bz$, and all eigenvalues of $\nabla \ff(\yy^*)$ have positive real parts, 
then there exists a positive definite matrix $P$ such that 
\[ V(\zz)=\zz^\top P\zz, \,\, \zz=\yy-\yy^*, \] 
is a local quadratic Lyapunov function satisfying 
\[ \nabla V(\zz)^\top \ff(\yy^*+\zz) \geq cV(\zz) \] 
for all sufficiently small $\zz$. 
This is a classical consequence of Lyapunov's indirect method; see, for example, Khalil~\cite[Theorems 4.6--4.7]{Khalil}. 
One could therefore define a more general MFGOU process by replacing the symmetric positive-definiteness assumption on $\cJ$ with the assumption that all eigenvalues of $\cJ$ have positive real parts. 
We expect Theorems \ref{thm:main_theorem} and \ref{thm:exponential_mixing_time} to extend to this setting, with $\lambda_1$ in Theorem \ref{thm:main_theorem} interpreted as the minimum of the real parts of the eigenvalues of $\cJ$. 
If $\cJ$ is diagonalizable, the remainder should remain $O(n)$, whereas nontrivial Jordan blocks may introduce additional lower-order terms.
We do not pursue this additional generality here. 

\smallskip

A different generalization would be to dispense with the discrete and semicontinuous conditions. 
Much of the analysis applies to any basic MFGOU. In particular, as will be evident from the proof below, the process reaches an $O(1/\sqrt n)$-neighborhood of the attracting point by time 
\[ \frac{1}{\lambda_1}n\log n+O(n) \] 
with high probability, with this time scale being sharp for suitable starting points. 
Once this neighborhood has been reached, only an $O_\ep(n)$ endgame remains, loosely speaking.
On a more general state space, however, convergence in total variation may fail, since nearby copies need not be able to coalesce. 
We expect an analogous mixing-time result to hold in an appropriate weaker metric, under suitable assumptions on the transition kernels. 
Developing such a formulation would require a separate framework, and we do not pursue it here.

}
\end{remark}

\begin{remark} \label{remark:various_things_on_MFGOU_assumptions}
    {\rm 
The following observations clarify the role of the Lyapunov condition and how the global MFGOU family assumptions are used.

    \begin{enumerate}[label=(\roman*)]
    \item If $\ff(\yy)$ has a potential, then $\ff(\yy)$ can be written as $\nabla g(\yy)$ for some $g$ with $g(\yy_0)=\bz$. 
    The Lyapunov inequality of Theorem \ref{thm:main_theorem} then follows if $\n\nabla g(\yy)\n^2 \geq c (g(\yy) \wedge 1)$ by taking $V=g$.
    If in addition $g$ is nonnegative, vanishes only at $\yy_0$, and has compact sublevel sets, then all Lyapunov hypotheses of Theorem \ref{thm:main_theorem} hold.
    \item In some applications, the process $\cY^{(n)}$ under study is not an MFGOU process globally, but satisfies the MFGOU conditions only on a region containing the attracting point. 
    If a smaller subset of that region is reached within time $O(n)$ with high probability and the process is unlikely to leave the larger region before an exponentially long time, then the censoring results of Section \ref{subsec:censoring} allow the process to be treated as confined to the larger region. Theorem \ref{thm:main_theorem} then applies to the censored process, and the time taken to reach the smaller region contributes only an additional $O(n)$ term to the mixing-time upper bound.
    \end{enumerate}
    }
\end{remark}

The proof of Theorem \ref{thm:main_theorem} separates the approach to equilibrium into two scales. 
First, a Lyapunov argument brings the process into any prescribed sufficiently small neighborhood of the attracting point in time $O(nV(\bY_0))$ and keeps it within a slightly larger neighborhood for an exponentially long time.
Inside that neighborhood, we prove that the variance of the process is uniformly of order $1/n$ and that its mean closely follows the linearized deterministic recursion. 
The time required for the mean to come within distance $O(1/\sqrt{n})$ of the origin is then $\tfrac{1}{\lambda_1}n\log n+O(n)$.
At this point, its distance from the origin is of the same order as its fluctuations, and the process is close to stationarity.
For the upper bound, two copies are first run independently until they both reach the $1/\sqrt{n}$-scale equilibrium region. 
The main coupling difficulty is that the natural coupling need not contract the expected distance at every step: it may exhibit an expansion by a factor $1+O(1/n)$. 
After compensating for this expansion by a deterministic rescaling, the distance becomes a supermartingale whose quadratic variation remains sufficiently large. 
Lemma \ref{lemma:path_coupling_with_expansion} shows that such a process has a probability bounded away from zero of hitting zero within the relevant time scale, even in the absence of strict contraction in its conditional mean. 
Applying this to a combination of a short-range and an inert coupling gives coalescence with probability bounded away from zero in time $O(n)$, and repeated attempts complete the proof.

\smallskip

The applications illustrate both the scope of the framework and the role of the local drift geometry. For continuous unimodal densities, we analyze a Barker-type Gibbs sampler and obtain a sharp mixing constant determined by the local curvature of the target log-density at its mode and the proposal covariance. 

For discrete mean-field block measures, we derive a general formula in which the effective curvature is the product of the free-energy Hessian and a covariance matrix associated with the single-step dynamics. 
By partitioning each block according to the initial spin labels, the projected-chain result can be lifted to the full chain under a mild spectral condition, with no change in the leading constant.
This recovers the classical Curie--Weiss Ising and Potts cutoff locations \cite{LLP,Cuff}, recovers known results for complete multipartite Ising models \cite{Kim2022}, and yields an extension to block interactions specified by a regular base graph.

The original motivation for the framework was collapsed Gibbs sampling for Latent Dirichlet Allocation \cite{BNJ}.
After projection onto topic-allocation frequencies, the Gibbs samplers for the LDA variants considered here become instances of the mean-field block-measure framework, and the block-partitioning argument transfers the resulting conclusions back to the full samplers.
For multinomial mixtures and partially collapsed LDA, the theory gives $n\log n$ mixing-time asymptotics under natural identifiability conditions. 
For full LDA, it reduces the mixing-time behavior of both the projected and the full chains to the stationary-point structure of an associated free energy. 
A complete analytic classification of that landscape remains open, but the framework distinguishes genuine metastability, as in \cite{slow}, from slow transitions caused solely by topic-label switching, and the proposed landscape picture is supported by numerical examples. 

Finally, applications to constant-step stochastic gradient descent and frequently refreshed Hamiltonian Monte Carlo show that the same contraction-and-fluctuation mechanism also arises outside single-site Gibbs sampling.

\smallskip

The paper is organized as follows. Section \ref{sec:preliminaries} develops the preliminary hitting-time, censoring, and coalescence tools. Section \ref{sec:mixing_time_proof} proves the main mixing-time results by establishing confinement near the attracting point, a uniform variance bound, tracking of the mean, and the final lower- and upper-bound arguments. Section \ref{sec:applications} contains the applications. Section \ref{subsec:Metropolis--Hastings} treats Gibbs and Metropolis--Hastings sampling from continuous densities, Section \ref{subsec:mean_field_glauber} develops the general block-measure framework and applies it to Ising and Potts models, LDA, and multinomial mixture models, and Sections \ref{subsec:SGD} and \ref{subsec:HMC} discuss stochastic gradient descent and Hamiltonian Monte Carlo.


\section{Preliminaries} \label{sec:preliminaries}

\subsection{Total variation distance and mixing time} \label{subsec:def_of_total_variation_etc}

The total variation distance between two probability measures $\mu$ and $\nu$ on $\Lambda$ is defined as
\[\n \mu - \nu \n_{TV} = \sup\{\mu(A)-\nu(A): A \subseteq \Lambda\},\]
where the supremum is over measurable sets.
If $\Lambda$ is countable, then the right-hand side equals $\tfrac{1}{2}\sum_{x \in \Lambda} |\mu(x)-\nu(x)|$ and if $\Lambda$ is a measurable subset of $\RR^m$ and $\mu$ and $\nu$ are absolutely continuous, 
then the right-hand side equals $\tfrac{1}{2}\int|\psi_\mu(x)-\psi_\nu(x)|dx$ for the respective densities $\psi_\mu$ and $\psi_\nu$. 

For a discrete-time Markov chain $\cX=\{X_t\}_{t=0}^\infty$ on state space $\Lambda$ that admits a unique stationary probability  distribution $\pi$, the $\epsilon$-mixing time, $\epsilon>0$, started from $x$ is given by
\[\taum(\cX;\epsilon,x) = \inf\{t: \n \Pro_x(X_t \in \cdot)-\pi \n_{TV} \leq \epsilon\}.\]
For a subset $\Lambda_0$ of $\Lambda$, 
\[\taum(\cX;\epsilon,\Lambda_0) := \sup_{x \in \Lambda_0} \taum(\cX;\ep,x).\]
When the starting point is entirely unspecified, one refers to
$\taum(\cX;\epsilon) := \taum(\cX;\ep,\Lambda)$.
(In general, these worst-case quantities may be infinite.) 

If $\{\cX^{(n)}\}$ is a sequence of Markov chains such that for each $0 < \ep < 1$,
\[\lim_{n \ra \iy}\frac{\taum(\cX^{(n)};\ep)}{\taum(\cX^{(n)};1-\ep)} = 1,\]
then we say that $\{\cX^{(n)}\}$ exhibits cutoff at $t_n$ for any $t_n$ such that $\taum(\cX^{(n)};a)/t_n \ra 1$ for some (and hence all) $0<a<1$, and if the $n$ in $\cX^{(n)}$ is understood, we simply say that $\cX$ exhibits cutoff.

We also define 
\[\tauh(\cX;\epsilon) = \sup_{x,x' \in \Lambda}\inf\{t: \n \Pro_{x'}(X_t \in \cdot)-\Pro_{x}(X_t \in \cdot) \n_{TV} \leq \epsilon\}.\]
In these definitions, $\Pro_x$ refers to the conditional probability measure given $X_0=x$. Obviously $\taum(\cX;\epsilon) \leq \tauh(\cX;\epsilon)$. 

The quantity $\tauh$ can be characterized by coupling. For $x,x' \in \Lambda$ let 
\[\tauh(x,x';\ep)=\inf\{t:\inf_{\cX^x,\cX^{x'}}\Pro(T(\cX^x,\cX^{x'})>t) \leq \ep\},\]
where the inner infimum is taken over couplings $(\cX^x,\cX^{x'})$ of copies of $\cX$ started from $x$ and $x'$ respectively and $T(\cX^x,\cX^{x'})$ is their coalescence time.
Then
\[
\tauh(\cX;\epsilon) = \sup_{x,x' \in \Lambda}\tauh(x,x';\ep).
\]

A consequence is that $\tauh$ enjoys the contractive property:
\[\tauh(\cX;\epsilon_1\epsilon_2) \leq \tauh(\cX;\epsilon_1) + \tauh(\cX;\epsilon_2).\]

\subsection{A drift-based bound for hitting probabilities} \label{subsec:no_escape_with_negative drift}

For later use in the censoring argument, we need a straightforward estimate for the probability that a process with bounded increments and negative drift reaches a high level before returning to a small one. The statement is standard in nature, but since we have not seen it explicitly in this form, we include a short proof.

\begin{lemma} \label{lemma:no_escape_with_negative drift}

Let $\{X_t\}_{t=0}^\iy$ be a stochastic process on $[0,\iy)$ such that for some $\alpha,\delta,c_0>0$,
\begin{itemize}
    \item $\E[X_{t+1}-X_t \mid X_0^t=x_0^t] \leq -\alpha$ whenever $x_t \geq \delta$,
    \item $|X_{t+1}-X_t| \leq c_0$ almost surely for all $t$.
\end{itemize}
Let $c>0$ and let $\tau_\delta$ be the first time that $X_t$ hits $[0,\delta]$ and $\tau_{cn}$ be the first time it hits $[cn,\iy)$. 
Then if $X_0=x$ and $x \in (\delta,cn/2)$, there is some $q>0$ independent of $n$ such that 
\[\Pro(\tau_{cn} < \tau_{\delta}) \leq e^{-qn}\]
and $\Pro(\tau_{cn} \leq e^{qn/2}) \leq e^{-qn/2}$.
    
\end{lemma}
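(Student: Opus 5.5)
The plan is to build an exponential supermartingale. Consider $M_t=e^{\theta X_t}$ for a small $\theta>0$ chosen independently of $n$. Write $D=X_{t+1}-X_t$. On the event $x_t\geq\delta$ we have $|D|\le c_0$ and $\E[D\mid X_0^t]\le-\alpha$. A second-order Taylor bound $e^{\theta D}\le 1+\theta D+\tfrac12\theta^2c_0^2e^{\theta c_0}$ then gives
\[
\E[e^{\theta D}\mid X_0^t]\le 1-\theta\alpha+\tfrac12\theta^2c_0^2e^{\theta c_0}\le 1
\]
once $\theta$ is small enough, depending only on $\alpha$ and $c_0$. Hence $M_{t\wedge\tau_\delta\wedge\tau_{cn}}$ is a nonnegative supermartingale.

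\emph{First bound.} Let $\tau=\tau_\delta\wedge\tau_{cn}$. The first step is to show $\tau<\infty$ a.s. This holds because $X_{t\wedge\tau}+\alpha(t\wedge\tau)$ is a supermartingale with bounded increments while $X\ge0$. Taking expectations gives $\E[\tau]\le x/\alpha<\infty$. Optional stopping for the bounded stopped process $M$ (it is bounded by $e^{\theta(cn+c_0)}$) then gives
\[
e^{\theta cn}\,\Pro(\tau_{cn}<\tau_\delta)\le \E[M_\tau]\le e^{\theta x}\le e^{\theta cn/2}.
\]
Therefore $\Pro(\tau_{cn}<\tau_\delta)\le e^{-\theta cn/2}$, so any $q\le\theta c/2$ works for the first claim.

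\emph{Second bound.} Split the path into excursions. Each time the process is in $[0,\delta]$, it must leave and then reach $cn$ before returning. Any excursion that starts above $\delta$ starts at a point at most $\delta+c_0<cn/2$ for large $n$, by bounded increments. This is the subtle step: the process may sit in $[0,\delta]$ and jump out, landing anywhere in $(\delta,\delta+c_0]$. Applying the first bound at each such exit time, via the strong Markov-type conditional version (the drift hypothesis is conditional on the whole past, so the supermartingale argument restarts at any stopping time), each excursion reaches $cn$ before returning to $[0,\delta]$ with probability at most $e^{-qn}$.

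Within time $e^{qn/2}$ there are at most $e^{qn/2}$ excursion starts, one per time step. There is also the initial excursion from $x$. A union bound over starting times then gives
\[
\Pro(\tau_{cn}\le e^{qn/2})\le (e^{qn/2}+1)e^{-qn}\le 2e^{-qn/2}.
\]
Reducing $q$ slightly absorbs the factor $2$ and any finitely many small $n$.

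The main obstacle is the conditional restart, i.e. justifying the first bound at arbitrary stopping times from arbitrary histories. This follows because the hypothesis is stated conditionally on $X_0^t$. One must also keep $\theta$ and $q$ uniform in $n$, which they are, since $\theta$ depends only on $\alpha$ and $c_0$.
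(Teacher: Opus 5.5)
Your proposal is correct and follows essentially the same route as the paper: an exponential supermartingale $e^{\theta X_t}$ stopped at $\tau_\delta\wedge\tau_{cn}$ and optional stopping give the first bound, and a union bound over at most $e^{qn/2}$ excursions, each starting in $(\delta,\delta+c_0]$, gives the second. The differences are minor: you use a second-order Taylor bound where the paper uses Hoeffding's lemma (which yields the explicit choice $\theta=\alpha/c_0^2$), and you spell out the finiteness of $\tau$ and the conditional restart at each excursion, which the paper leaves implicit.
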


\begin{proof}
    Fix $\theta>0$, to be chosen below. Assume that $x_t>\delta$.
    Let $\Delta_t=X_{t+1}-X_t$, so that by assumption $\E[\Delta_t|X_0^t=x_0^t] \leq -\alpha$.
    The range of $\Delta_t$ is contained in $[-c_0,c_0]$, so by Hoeffding's lemma for bounded random variables,
    \[\E[e^{\theta X_{t+1}} \mid X_0^t=x_0^t] \leq \exp\left(-\theta\alpha+\frac{\theta^2 c_0^2}{2}\right)e^{\theta X_t}.\]
    Taking $\theta = \alpha/c_0^2$, we get that $\{e^{\theta X_t}\}$ is a supermartingale up to $\tau_\delta$.
    Letting $\tau=\min(\tau_\delta,\tau_{cn})$ and $p = \Pro(\tau_{cn} \leq \tau_\delta)$, optional stopping gives
    \[e^{\theta x} \geq E[e^{\theta X_\tau}] \geq p\,e^{\theta cn}+(1-p),\]
    (where the second term comes from $X_\tau \geq 0$)
    from which it follows that $p \leq 2e^{-\theta(cn-x)} \leq e^{-qn}$.
    The second assertion follows by applying the first bound uniformly to each excursion from $[0,\delta]$ toward $[cn,\infty)$.
    After a return to $[0,\delta]$ the next possible starting point above $\delta$ is bounded by $\delta+c_0$, and then taking a union bound over at most $e^{qn/2}$ attempted excursions.
\end{proof}

\subsection{Censoring and restriction to a smaller domain} \label{subsec:censoring}

The following observations will be used extensively. Together they say essentially that when considering the mixing time of a Markov chain, $\cX$, for which there is a subset of the state space that is (1) quickly reached and (2) with high probability not left for a long time once in it,
one can consider $\cX$ as living only on that subset.

We use the term censored in the broad sense of an auxiliary chain whose kernel agrees with the original kernel on the relevant inner region but is modified outside that region to remain in a prescribed larger region. This is not a standard monotone-censoring construction.

Assume the following setup:
\begin{itemize}
    \item $\cX=\{X_t\}_{t=0}^\iy$ is a Markov chain on a compact subset $\Lambda$ of $\RR^m$,
    \item $\Lambda_0 \subseteq \Lambda_1 \subseteq \Lambda_2 \subseteq \Lambda$,
    \item $\Pro_x(X_{t+1} \in \Lambda_2 \mid X_t=x) = 1$ for all $x \in \Lambda_1$,
    \item $0<\ep<1/7$, $T_1<\iy$ and $\Pro_x(X_0^{T_1} \subseteq \Lambda_1|X_0=x)>1-\ep$ for all $x \in \Lambda_0$. 
\end{itemize}
Here $x_s^t$ denotes $\{x_s,x_{s+1},\ldots,x_t\}$.

Let $\cX^r$ be a censored Markov chain coupled with $\cX$ so that the two chains make identical moves from all states in $\Lambda_1$. 
Outside $\Lambda_1$, any transition that would take $\cX$ outside $\Lambda_2$ is replaced by a prescribed transition staying in $\Lambda_2$.

\begin{lemma} \label{lemma:start_in_subset}
Under the given setup, for any measurable set $A \subseteq \Lambda$, $t \leq T_1$ and $x \in \Lambda_0$,
\[\Pro_x((X^r)_t \in A) \in \Pro_x(X_t \in A) \pm 2\ep.\]
\end{lemma}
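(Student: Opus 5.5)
The plan is a direct coupling argument. We work with the coupling of $\cX$ and $\cX^r$ fixed in the setup: both start at $x\in\Lambda_0$ and make identical moves from every state in $\Lambda_1$. Let $E$ be the event $\{X_0^{T_1}\subseteq\Lambda_1\}$. By the fourth assumption of the setup, $\Pro_x(E)>1-\ep$.

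First we show by induction on $s$ that on $E$ we have $(X^r)_s=X_s$ for all $s\le T_1$. The base case holds since both chains start at $x$. For the inductive step, if $(X^r)_s=X_s\in\Lambda_1$ and $s<T_1$, then the two chains make the same move, so $(X^r)_{s+1}=X_{s+1}$. Strictly speaking, the coupling is defined only through the rule "identical moves from states in $\Lambda_1$". So we should say explicitly that we realize both chains from a common sequence of update variables, and that the censored kernel is used only once the chain sits outside $\Lambda_1$. On $E$ this never happens before time $T_1$.

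Next, for any measurable $A$ and any $t\le T_1$,
\[
\bigl|\Pro_x((X^r)_t\in A)-\Pro_x(X_t\in A)\bigr| \le \Pro_x\bigl((X^r)_t\ne X_t\bigr)\le \Pro_x(E^c)<\ep \le 2\ep .
\]
This gives the claim, in fact with $\ep$ in place of $2\ep$.

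There is no real obstacle here. The only point needing care is that the statement is about the marginal laws of the two chains. Those laws do not depend on the coupling, so any coupling that agrees on $\Lambda_1$ gives the bound. A natural reason for the slack factor $2$ is the case where the hypothesis $\Pro_x(X_0^{T_1}\subseteq\Lambda_1)>1-\ep$ is to be applied to the censored chain rather than to $\cX$. Under the coupling, the events $\{X_0^{T_1}\subseteq\Lambda_1\}$ and $\{(X^r)_0^{T_1}\subseteq\Lambda_1\}$ coincide, so this costs nothing. Combining the two bounds, each of size at most $\ep$, gives $2\ep$ in any case.
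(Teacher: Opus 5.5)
Your argument is correct and is essentially the paper's. The paper notes that $\cX$ and $\cX^r$ give the same probability to every trajectory $x_0^t\subseteq\Lambda_1$ and sums (or integrates) over such trajectories, which is the law-level version of your pathwise fact that the coupled chains coincide on $\{X_0^{T_1}\subseteq\Lambda_1\}$. Its $2\ep$ arises as you guessed, from bounding the non-confined mass separately for $\cX$ and for $\cX^r$, whereas your coupling inequality gives $\ep$ directly and needs no separate sum-versus-integral treatment.
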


\begin{proof}
   By the definition of $\cX^r$, for any $x_0^t \subseteq \Lambda_1$, $\Pro((X^r)_0^t = x_0^t) = \Pro(X_0^t = x_0^t)$. If $\Lambda$ is finite,
   \begin{align*}
   \Pro_x(X_t \in A) &\in \sum_{x_0^t \subseteq \Lambda_1:x_t \in A}\Pro(X_0^t=x_0^t) \, \pm \, \ep \\
   &=  \sum_{x_0^t \subseteq \Lambda_1:x_t \in A}\Pro((X^r)_0^t=x_0^t) \, \pm \, \ep \\
   &= \Pro((X^r)_t \in A) \, \pm \, 2\ep.
   \end{align*}
   If $\cX$ has a continuous kernel, then replace the sum with the corresponding integral.
\end{proof}

Assume in addition that 
\begin{itemize}
    \item $\tauh(\cX^r;\ep) \leq t$ for some $t \leq T_1$,
    \item $\Pro_y(X_{T_0} \in \Lambda_0) \geq 1-\ep$ for any starting point $y \in \Lambda$.
\end{itemize}

\begin{lemma} \label{lemma:mixing_on_fast_hit_restricted_set}
Under the added setup,
\[\tauh(\cX;7\ep) \leq T_0+t.\]
\end{lemma}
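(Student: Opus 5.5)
To prove Lemma \ref{lemma:mixing_on_fast_hit_restricted_set}, I will build a coupling of two copies of $\cX$ started from arbitrary $y,y'\in\Lambda$ and use the coupling characterization of $\tauh$ from Section \ref{subsec:def_of_total_variation_etc}. The coupling has two phases. In the first phase I run the copies independently for $T_0$ steps. By the second added hypothesis, $X_{T_0}\in\Lambda_0$ and $X'_{T_0}\in\Lambda_0$ except on an event of probability at most $2\ep$ (union bound). In the second phase, conditional on the positions $x=X_{T_0}$, $x'=X'_{T_0}$, I restart from time $T_0$ using the Markov property and compare with the censored chain $\cX^r$ through Lemma \ref{lemma:start_in_subset}.

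The second phase rests on a pathwise version of the argument behind Lemma \ref{lemma:start_in_subset}. Take $x,x'\in\Lambda_0$ and run, on a common probability space, $\cX^r$ from $x$ and from $x'$ under an optimal coupling, so that they have coalesced by time $t$ except with probability at most $\ep$. This uses $\tauh(\cX^r;\ep)\le t$. After coalescence I let them move together. Each censored copy is in turn coupled with an original copy as in the setup, making identical moves while in $\Lambda_1$. The hypothesis $\Pro_x(X_0^{T_1}\subseteq\Lambda_1)>1-\ep$ and $t\le T_1$ then show that the original copy agrees with its censored copy up to time $t$ except with probability $\ep$. The same holds for the primed copy.

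To make this a genuine coupling of two copies of $\cX$, I would construct the original copy from the censored path by gluing. The original follows the censored path until the first exit from $\Lambda_1$, and afterwards is driven by an independent continuation of $\cX$'s kernel. This is legitimate because the two kernels agree on $\Lambda_1$, so the glued process has the law of $\cX$. On the good event, both original copies equal their censored copies through time $t$, and these have coalesced. Hence $X_{T_0+t}=X'_{T_0+t}$ outside an event of probability at most $\ep+\ep+\ep=3\ep$. Together with phase one, the failure probability is at most $2\ep+3\ep=5\ep\le7\ep$, which gives $\tauh(\cX;7\ep)\le T_0+t$.

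The main obstacle is this gluing step. Lemma \ref{lemma:start_in_subset} only controls one-dimensional marginals, whereas here I need a joint coupling of four processes that respects the original kernels. If constructing this is awkward, the fallback is to argue only in total variation. For any $A$, the triangle inequality gives $|\Pro_x(X_t\in A)-\Pro_{x'}(X_t\in A)|\le 2\ep+\ep+2\ep$ by Lemma \ref{lemma:start_in_subset} applied twice. Averaging over the time-$T_0$ positions, with the $2\ep$ loss from leaving $\Lambda_0$, then gives $7\ep$. This fallback explains the constant $7$ and is the route I would actually write down.
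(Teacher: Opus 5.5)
Your fallback argument is exactly the paper's proof: condition on the time-$T_0$ positions (losing $2\ep$ for missing $\Lambda_0$), pass from $\cX$ to $\cX^r$ at both points of $\Lambda_0$ via Lemma~\ref{lemma:start_in_subset} (losing $4\ep$), and use $\tauh(\cX^r;\ep)\le t$ (losing $\ep$), for $7\ep$ in total. The gluing coupling you sketch first also works, up to routine measurability and near-optimality details, and even gives $5\ep$, but it is unnecessary since $\tauh$ is defined directly through total variation.
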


\begin{proof}
    By assumption, for any $x,x' \in \Lambda_0$ and any $A$, 
    \[\Pro_x(X^r_t \in A)-\Pro_{x'}(X^r_t \in A) \leq \ep.\]
    Hence, with $u=T_0+t$ and any starting points $y,y' \in \Lambda$,
    \begin{align*}
    \Pro_y(X_u \in A)-\Pro_{y'}(X_u \in A) &\leq 2\ep + \max_{x,x' \in \Lambda_0} \left(\Pro_x(X_t \in A)-\Pro_{x'}(X_t \in A)\right) \\
    &\leq 6\ep + \max_{x,x' \in \Lambda_0} \left( \Pro_x((X^r)_t \in A)-\Pro_{x'}((X^r)_t \in A)\right) \\ 
    &\leq 7\ep.
    \end{align*}
\end{proof}

Taken together, the two lemmas give the following localization principle. Suppose that $\mathcal X$ reaches a set $\Lambda_0$ within time $T_0$ with high probability and, when started there, is unlikely to leave a larger set $\Lambda_1$ before time $T_1$. Then, up to a negligible error and the addition of the entrance time $T_0$, the mixing time of $\mathcal X$ agrees with that of a suitably censored version $\mathcal X^r$ confined to a still larger set $\Lambda_2$, provided that the mixing time of $\mathcal X^r$ is smaller than $T_1$. When this setup has been established, we will say that $\mathcal X$ may be treated as confined to $\Lambda_2$.


\begin{remark}
    {\rm 
In Section  \ref{subsec:to_small_box_in_linear_time}, we will use the freedom in the censoring mechanism to construct a censored chain that preserves the Lyapunov condition of Theorem \ref{thm:main_theorem}. 
The original transition kernel is retained on an inner Lyapunov sublevel set, 
while on the annulus between two larger sublevel sets, the drift is modified so that the censored chain cannot leave the outer set. 
The block-measure applications in Section \ref{subsec:mean_field_glauber} use a different construction adapted to the lattice simplex.    }
\end{remark}

\subsection{Coalescence for nearly contracting coupling distances} \label{subsec:Coalescence for nearly contracting coupling distances}

The following lemma gives a hitting-time estimate for a nonnegative supermartingale with sufficiently large quadratic variation. 
Its main use will be to control coupling distances that are not contracting in expectation but permit an expected relative expansion of order $O(1/n)$ per step. 
After a deterministic rescaling, such a distance becomes a supermartingale, and the remaining fluctuations give a probability bounded away from zero of hitting zero on the natural time scale.
We are not aware of a general path‑coupling theorem that directly covers this regime.
Corollary \ref{cor:path_coupling_with_expansion} below 
translates Lemma \ref{lemma:path_coupling_with_expansion} into a criterion that will later be verified for the coupling constructed in Section \ref{subsec:couplings}.

\begin{lemma} \label{lemma:path_coupling_with_expansion}
  Let $\{W_t\}_{t=0}^\iy$ be a nonnegative stochastic process with increments satisfying $|W_{t+1}-W_t| \leq L < \iy$ a.s., and let $T_0 = \min\{t:W_t =0\}$. 
  Assume that almost surely on the event $\{T_0 > t\}$, $\E[W_{t+1} \mid W_0^t] \leq W_t$ and $\E[(W_{t+1}-W_t)^2 \mid W_0^t] \geq a e^{-ct}$ for some $a,c \in (0,\iy)$. 
  Then
  \[\Pro\left(T_0 \leq \frac{1}{c}\right) \geq \frac{7}{9}\]
  whenever
  \[W_0^2 \vee 4L^2 \leq \frac{a(1-e^{-1})}{1000(1-e^{-c})}.\]
  
\end{lemma}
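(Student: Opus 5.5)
The plan is an optional-stopping argument for $W_t^2$, stopped at a level $H$ chosen proportional to $\max(W_0,L)$. We may assume $W_0>0$, otherwise $T_0=0$. Let $\cF_t$ be generated by $W_0^t$, $\Delta_t=W_{t+1}-W_t$, $D_t=\E[\Delta_t\mid\cF_t]\le 0$ on $\{T_0>t\}$, and $A_t=\sum_{s<t}\E[\Delta_s^2\mid\cF_s]$. Fix a level $H>0$, let $\sigma=\min\{t:W_t\ge H\}$, $t^*=\lfloor 1/c\rfloor$ and $\tau=T_0\wedge\sigma\wedge t^*$. Two estimates are needed:
\[
\Pro(\sigma<T_0\wedge t^*)\le W_0/H
\quad\text{and}\quad
\Pro(\tau=t^*<T_0\wedge\sigma)\lesssim H^2/S ,
\]
where $S$ is the accumulated lower bound on the conditional variance.

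\emph{Step 1 (escape upward).} Since $W$ stopped at $\tau$ is a nonnegative supermartingale, optional stopping gives $W_0\ge \E[W_\tau]\ge H\,\Pro(\sigma=\tau<T_0)$. Hence the probability of reaching $H$ before $0$ (and before $t^*$) is at most $W_0/H$.

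\emph{Step 2 (the quadratic-variation bound).} Expanding, $W_{t+1}^2-W_t^2=2W_t\Delta_t+\Delta_t^2$. Thus
\[
W_{t\wedge\tau}^2-A_{t\wedge\tau}-2\sum_{s<t\wedge\tau}W_sD_s
\]
is a martingale, and
\[
\E[A_\tau]=\E[W_\tau^2]-W_0^2+2\,\E\Big[\sum_{s<\tau}W_s(-D_s)\Big].
\]
Before $\tau$ we have $W_s<H$, and $W_\tau\le H+L$ by the increment bound. Moreover $\E\sum_{s<\tau}(-D_s)=W_0-\E[W_\tau]\le W_0$ by the Doob decomposition of the supermartingale. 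Together these give
\[
\E[A_\tau]\le (H+L)^2+2HW_0 .
\]
On the event $\{\tau=t^*<T_0\wedge\sigma\}$, the hypothesis gives $A_{t^*}\ge S:=\sum_{s<t^*}ae^{-cs}$. Markov's inequality then yields
\[
\Pro(T_0\wedge\sigma>t^*)\le \frac{(H+L)^2+2HW_0}{S}.
\]

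\emph{Step 3 (choice of constants).} Put $M=\max(W_0,2L)$ and $H=9M$. Step 1 contributes at most $1/9$. For Step 2, the numerator is at most $(9.5M)^2+18M^2<110M^2$. If $S$ were equal to $S_0:=a(1-e^{-1})/(1-e^{-c})$, then the hypothesis $M^2\le S_0/1000$ would make this at most $0.11<1/9$. Since $\{T_0>t^*\}\subseteq\{\sigma<T_0\wedge t^*\}\cup\{T_0\wedge\sigma>t^*\}$, we would conclude $\Pro(T_0\le 1/c)\ge 1-2/9=7/9$.

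\emph{Main obstacle.} The delicate point is the discretization in comparing $S$ with $S_0$. The exact geometric sum over $s=0,\dots,t^*-1$ is $a(1-e^{-ct^*})/(1-e^{-c})$, and $ct^*$ can be noticeably smaller than $1$ when $c$ is not small. The factor $1000$ leaves considerable slack for this loss when $c\le1$, since then $ct^*\ge 1-c$ is not too small. For large $c$, when $t^*$ is $0$ or $1$, one must check the statement directly; in that regime the hypothesis forces $W_0$ tiny compared with $\sqrt a$, and I would handle it separately. The alternative is to work with the sharper version of the Step 3 inequalities, tightening $H$ (e.g.\ $H=9W_0\vee 10L$) so that the total failure probability still stays below $2/9$. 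I expect this bookkeeping, rather than the martingale argument itself, to require the most care.
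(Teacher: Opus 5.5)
Your Steps 1 and 2 are correct, and they are a legitimate variant of the paper's argument. The paper uses the submartingale $(A-W_t)^2-a\sum_{s<t}e^{-cs}$, where the supermartingale property makes the cross term favorable; you use $W_t^2$ together with a Doob-compensator bound. Your choice $M=\max(W_0,2L)$ also spares you the paper's separate shift $U_t=W_t+L$ for $W_0<L$. The gap is in Step 3, and your truncation at $t^*=\lfloor 1/c\rfloor$ creates it: it only gives $S=a(1-e^{-ct^*})/(1-e^{-c})$, and you never show that $S$ is close enough to $S_0$. The margin is thin, not ``considerable''. With $H=9M$, Step 2 gives $108.25M^2/S\le 0.10825\,S_0/S$ against a budget of $1/9\approx 0.1111$, so you need $S\ge 0.974\,S_0$. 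At $c=0.6$ one has $t^*=1$, $S=a$ and $S_0\approx 1.40a$, so your bound is about $0.15$. For $c>1$ one has $t^*=0$ and $S=0$, so Step 2 says nothing. Your claim that $c\le 1$ is safe is therefore false, and ``handle it separately'' is not an argument.

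There are two ways to close this. The first is the missing observation that the bad regime is vacuous. If $W_0>0$, the variance hypothesis at $t=0$ together with $|W_1-W_0|\le L$ gives $a\le L^2$. The $4L^2$ part of the hypothesis then forces $1-e^{-c}\le (1-e^{-1})/4000$, i.e.\ $c<2\cdot 10^{-4}$. Hence $ct^*\ge 1-c$ gives $S/S_0>0.999$, and your constants close.

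The second, cleaner route is the paper's: do not truncate at all. Take $\tau=T_0\wedge\sigma$, possibly infinite. Your Step 2 computation at $\tau\wedge t$, followed by monotone convergence, gives $\E[A_\tau]\le (H+L)^2+2HW_0$. You also have exactly $A_\tau\ge a(1-e^{-c\tau})/(1-e^{-c})$. Now apply Markov's inequality to $1-e^{-c\tau}$ at level $1-e^{-1}$. Because $\{\tau>1/c\}=\{1-e^{-c\tau}>1-e^{-1}\}$ exactly, there is no rounding loss, and you get $\Pro(\tau>1/c)\le 0.10825<1/9$. Combined with Step 1 this yields $7/9$.

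A minor slip: your displayed inclusion misses the event $\{\sigma=t^*<T_0\}$. State Step 1 for $\{\sigma\le t^*,\ \sigma<T_0\}$; its proof, via $W_\tau\ge H$, already covers that event.
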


\begin{proof}
  
Fix $A > W_0$ arbitrarily. We claim that $(A-W_t)^2-a\sum_{s=0}^{t-1}e^{-cs}$ is a submartingale until the first time $T$ such that either $W_T = 0$ or $W_T \geq A$, i.e.\
  \[a e^{-ct} \leq \E[(A-W_{t+1})^2-(A-W_t)^2 \mid W_0^t].\]
   To see that, note first that the supermartingale assumption $\E[W_{t+1} \mid W_0^t] \leq W_t$ implies that $\E[(A-W_{t+1})(A-W_t) \mid W_0^t] \geq (A-W_t)^2$. 
   Therefore
  \begin{align*}
  a e^{-ct} &\leq \E[(W_{t+1}-W_t)^2 \mid W_0^t]\\
  &= \E[((A-W_{t+1})-(A-W_t))^2 \mid W_0^t] \\
  & \leq \E[(A-W_{t+1})^2-(A-W_t)^2 \mid W_0^t],
  \end{align*}
  proving the claim.
  
  Assume now first that $W_0 \geq L$.
  The Optional Stopping Theorem then gives, by taking $A=9W_0$ and noting that $W_T \leq 10W_0$,
  \[\E\left[a\sum_{s=0}^{T-1}e^{-cs}\right] \leq 100W_0^2.\]
  The sum in the left hand expectation equals $(1-e^{-cT})/(1-e^{-c})$, so the inequality implies that
  \[\E[1-e^{-cT}] \leq \frac{100W_0^2(1-e^{-c})}{a},\]
  which by Markov's inequality gives
  \[\Pro\left(e^{-cT} \geq 1-\frac{1000W_0^2(1-e^{-c})}{a}\right) \geq \frac{8}{9}.\]
  By the assumption on $W_0$, this becomes
  \[\Pro(e^{-cT} \geq e^{-1}) \geq \frac{8}{9},\]
  i.e.
  \[\Pro\left(T \leq \frac{1}{c}\right) \geq \frac{8}{9}.\]
  Since $W_t$ itself is a supermartingale and $W_T \geq 9W_0$ on the event $W_T \geq A$, it follows that $\Pro(W_T=0) \geq 8/9$. 
  Combining these,
  \[\Pro\left(T_0 \leq \frac{1}{c}\right) \geq \frac{7}{9}\]
  as claimed.

  Finally, if \(W_0<L\), repeat the preceding argument with \(U_t=W_t+L\),
stopping when \(U_t\le L\) or \(U_t\ge A\), and adding the transition rule
that whenever \(U_t\le L\), then \(U_{t+1}=0\). The same variance lower bound
holds before this stopping time, since then \(U_{t+1}-U_t=W_{t+1}-W_t\).
(At the final artificial step the variance bound is no longer needed.) 
Since $U_0<2L$, the smallness assumption on $U_0^2 \vee 4L^2$ is automatic.
\end{proof}

When applying Lemma \ref{lemma:path_coupling_with_expansion} for Corollary \ref{cor:path_coupling_with_expansion} below, $a$ is of order $1/n^2$, and $c$ and $L$ of order $1/n$, making the $4L^2$ part of the smallness condition on $U_0^2 \vee 4L^2$ automatic for large $n$.

\begin{corollary} \label{cor:path_coupling_with_expansion}
Let $\{D_t\}_{t=0}^\iy$ be a stochastic process on $[0,\kappa]$, let $T=\inf\{t:D_t=0\}$, and assume that there exist $\alpha,K \in (0,\iy)$ such that almost surely on the event $\{T > t\}$,
\begin{itemize}
    \item $\E[D_{t+1} \mid D_0^t] \leq (1+K/n)D_t$,
    \item $\E[(D_{t+1}-D_t)^2 \mid D_0^t] \geq \alpha/n^2$,
    \item $|D_{t+1}-D_t| \leq C/n$.
\end{itemize}
If $\kappa>0$ is sufficiently small, for any $\hat{K} \geq K$,
\[\Pro\left(T \leq \frac{n}{4\hat{K}}\right) \geq \frac{7}{9},\]
provided that 
\[D_0^2 \leq \frac{\alpha}{16000\hat{K}n}.\]
\end{corollary}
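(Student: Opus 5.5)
The plan is to reduce the corollary to Lemma \ref{lemma:path_coupling_with_expansion} by a deterministic rescaling. Set
\[W_t=(1+\hat K/n)^{-t}D_t, \qquad t\le T,\]
and freeze $W_t$ at $0$ after $T$. Since $\hat K\ge K$, the first hypothesis gives, on $\{T>t\}$,
\[\E[W_{t+1}\mid W_0^t]\le (1+\hat K/n)^{-(t+1)}(1+K/n)D_t\le W_t.\]
Thus $W$ is a nonnegative supermartingale up to its hitting time of zero. That hitting time coincides with $T$, because the rescaling factor is positive. Conditioning on $W_0^t$ rather than $D_0^t$ is harmless, since the two generate the same $\sigma$-field: the factors are deterministic.

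Next I bound the increments of $W$. Write
\[W_{t+1}-W_t=(1+\hat K/n)^{-(t+1)}\left[(D_{t+1}-D_t)-(\hat K/n)D_t\right].\]
Since $D_t\le\kappa$, this gives $|W_{t+1}-W_t|\le (C+\hat K\kappa)/n=:L$.

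For the quadratic variation, I expand the square. The cross term is $-2(\hat K/n)D_t\,\E[D_{t+1}-D_t\mid\cdot]$. The conditional mean of $D_{t+1}-D_t$ is at most $KD_t/n$ from above and at least $-C/n$ from below, so the cross term is at least $-2\hat K\kappa C/n^2$. Choosing $\kappa$ small enough that $2\hat K\kappa C\le\alpha/2$, I obtain
\[\E[(W_{t+1}-W_t)^2\mid W_0^t]\ge (1+\hat K/n)^{-2(t+1)}\frac{\alpha}{2n^2}\ge \frac{\alpha}{4n^2}e^{-2\hat Kt/n},\]
using $1+x\le e^x$ and absorbing the factor $(1+\hat K/n)^{-2}\ge 1/2$ for large $n$.

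This step is the main obstacle. The smallness of $\kappa$ needed to kill the cross term depends on $\hat K$, which seems unavoidable given the hypotheses. I will read ``$\kappa$ sufficiently small'' as allowed to depend on $\hat K$, $C$ and $\alpha$. An alternative, for $D_t$ of order $1/\sqrt n$, is that the cross term is $O(n^{-5/2})$ and so negligible for large $n$, but uniform control needs the $\kappa$ restriction.

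Now apply Lemma \ref{lemma:path_coupling_with_expansion} with $a=\alpha/(4n^2)$ and $c=2\hat K/n$. This yields $\Pro(T\le n/(2\hat K))\ge 7/9$, provided
\[W_0^2\vee4L^2\le \frac{a(1-e^{-1})}{1000(1-e^{-c})}.\]
Since $1-e^{-c}\le c$ and $1-e^{-1}>1/2$, the right side is at least $\alpha/(16000\hat K n)$. So $D_0^2\le\alpha/(16000\hat Kn)$ suffices for the $W_0$ part. The $4L^2=O(1/n^2)$ part holds automatically for large $n$, as the paper remarks after the lemma.

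This gives the time bound $n/(2\hat K)$ rather than the stated $n/(4\hat K)$. To get exactly $n/(4\hat K)$ I apply the same argument with $\hat K$ replaced by $2\hat K\ge K$. Then $c=4\hat K/n$ and the time becomes $n/(4\hat K)$. The smallness condition becomes $W_0^2\le\alpha(1-e^{-1})/(16000\hat Kn)$, with more careful constants: use $1-e^{-c}\le c$, $(1+x)^{-2}\ge e^{-2x}$ in place of the factor $1/2$, and choose $\kappa$ so that the cross term costs only a factor close to $1$. I expect this bookkeeping of absolute constants, such as $16000$ versus $1000\cdot 16$, to be the only delicate point. If the constants do not close exactly, the argument still yields the statement with a slightly adjusted numerical constant.
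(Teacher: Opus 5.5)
Your proposal is correct and is essentially the paper's argument: a deterministic geometric rescaling turns $D_t$ into a nonnegative supermartingale with conditional second moment at least $ae^{-ct}$, and Lemma~\ref{lemma:path_coupling_with_expansion} then finishes the proof. The paper uses $W_t=(1-K/n)^tD_t$, $a=\alpha/(2n^2)$, $c=4K/n$, and treats $\hat K>K$ by rerunning the argument with $\hat K$ in place of $K$. Your constants close without doubling $\hat K$: a bound $ae^{-ct}$ implies $ae^{-c't}$ for every $c'\ge c$, so you may take $c=4\hat K/n$ directly, and making the cross term at most $\alpha/(4n^2)$ gives $a=\alpha/(2n^2)$, so $1-e^{-1}\ge 1/2$ yields exactly the threshold $\alpha/(16000\hat K n)$. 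The same monotonicity in $c$ also lets you rescale by $(1+K/n)^{-t}$ instead, so $\kappa$ need only depend on $K$, $C$ and $\alpha$, contrary to your expectation.
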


\begin{proof}
   First assume $\hat{K}=K$.
   Taking $W_t = (1-K/n)^tD_t$, gives 
   \[\E[W_{t+1} \mid W_0^t] \leq W_t\] 
   and 
   \[\E[(W_{t+1}-W_t)^2 \mid W_0^t] \geq \frac{\alpha(1-2K/n)^t}{n^2} - \frac{4K\kappa^2}{n^2}\geq \frac{\alpha e^{-4Kt/n}}{2n^2},\]
   for $\kappa$ sufficiently small and since $t \leq n/4K$.
   The assumption on $D_0$ ensures that $W_0$ satisfies the smallness assumption of Lemma \ref{lemma:path_coupling_with_expansion}.
   For the process \(W_t\), the increment bound is \(L=O(1/n)\), while
$a=\tfrac{\alpha}{2n^2}$ and $c=4K/n$, so the $4L^2$ part of the assumption
is satisfied for all sufficiently large $n$.
Applying the lemma proves the desired result.
   For $\hat{K}>K$, just observe that the conditions in the corollary statement hold with $K$ replaced by any larger number. 
\end{proof}


\section{The mixing time proofs} \label{sec:mixing_time_proof}

We recall the standing assumption that the attracting point $\yy_0$ is the origin.

\subsection{Couplings for the MFGOU} \label{subsec:couplings}

The mixing-time upper bound of Theorem \ref{thm:main_theorem} will be proved by first running two copies of $\cY$ independently until both have entered an $O(1/\sqrt{n})$ neighborhood of the attracting point, and then coupling them to achieve coalescence in an additional $O(n)$ steps.
This section proves, by construction, the existence of a coupling that provides the second phase and verifies the conditions
needed to apply Corollary \ref{cor:path_coupling_with_expansion}.
The coupling is a variant of path coupling \cite{DG}; see also the original path-coupling paper \cite{BubleyDyer}. 
Unlike the path-coupling argument of Bordewich and Dyer \cite{BD}, it permits a slight expected relative expansion in distance.
The construction combines two different mechanisms. For sufficiently nearby states, a short-range coupling gives a probability bounded away from zero of immediate coalescence, 
while controlling the expected change in distance and ensuring a sufficiently large second moment of that change. 
It will later be combined with an inert coupling for states that are farther apart.

Recall the following measure-theoretic notation. For a measure $\mu$, write $\tau_\hh\mu:=\mu(\cdot-\hh)$.
For probability measures $\mu$ and $\nu$ with densities $f_\mu$ and $f_\nu$ with respect to a given common base measure $\lambda$, the common submeasure is $(f_\mu \wedge f_\nu)d\lambda$.
Its total mass is $1-\n \mu-\nu\n_{TV}$. Whenever we refer to a random variable distributed according to the common part, we mean the normalized measure $(f_\mu \wedge f_\nu)d\lambda/(1-\n \mu-\nu\n_{TV})$.

\begin{proposition} \label{prop:short_distance_coupling} 
Let $\yy_0=\bz$ and $\kappa>0$ be as in the definition of an MFGOU family.
For a semicontinuous MFGOU family, there exists $\alpha>0$ and $\ep>0$ independent of $n$, such that whenever $\yy,\yy' \in B(\bz,\kappa)$ and $d:=\n \yy-\yy'\n_2 \leq \ep/n$, then there exists a coupling for which 
\begin{itemize}
\item[(a)] $\Pro(\bY_1=\bY'_1 \mid \bY_0=\yy,\bY'_0=\yy') \geq \alpha$,
\item[(b)] $\E[\n \bY_1-\bY'_1\n_2 \mid \bY_0=\yy,\bY'_0=\yy'] \leq d+Q/n^2$,
\item[(c)] $\E[(D-d)^2 \mid \bY_0=\yy,\bY'_0=\yy'] \geq \alpha d^2$,
\end{itemize}
where $D=\n\bY_1-\bY'_1\n_2$.

For a discrete MFGOU, the same result holds with the $\ell^2$-norm replaced by the $\ell^1$-norm and the condition $d \leq \ep/n$ replaced by $d=1/n$.
\end{proposition}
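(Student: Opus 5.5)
We construct the coupling one step at a time from the decomposition of the increment laws, and treat the semicontinuous and discrete cases separately. Write $\hh=\yy'-\yy$, so $\n\hh\n_2=d$.

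\emph{Semicontinuous case.} We have $\bY_1=\yy+\bxi/n$ and $\bY'_1=\yy'+\bxi'/n$. Coalescence $\bY_1=\bY'_1$ means $\bxi=\bxi'+n\hh$, i.e.\ a shift of the increment by $n\hh$, whose norm is at most $\ep$.

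1. Couple the holding coins. By the uniform Lipschitz bound on $a^{(n)}(\cdot)$, the two chains hold together except with probability $|a^{(n)}(\yy)-a^{(n)}(\yy')|\le Qd$. On the event where both move (probability at least $a_0-Qd$), use a second coin with probability $b_0$ to decide whether both draw from $\mu_s$.

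2. Given that both draw from $\mu_s$, couple $\bxi\sim\mu_s$ with $\bxi'\sim\mu_s$ by a maximal coupling of $\mu_s$ and $\tau_{n\hh}\mu_s$. Let $p(\hh)=1-\n\mu_s-\tau_{n\hh}\mu_s\n_{TV}$. Since $\mu_s$ is absolutely continuous, translation is continuous in $L^1$, so $p(\hh)\ge 1/2$ for $\n n\hh\n\le\ep$ once $\ep$ is small. On the common part, set $\bxi=\bxi'+n\hh$, which gives coalescence. On the residual part, choose the complementary pairing in a way that keeps $\bxi-\bxi'$ bounded; $K$ compact already gives this.

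3. Couple the $\mu_r^{(n)}$ components maximally. Their total-variation distance is $O(d)$ by the uniform Lipschitz property. On the agreeing event set $\bxi=\bxi'$, so that $D=d$.

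This gives (a), with $\alpha\ge a_0b_0/2$.

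For (b), split $D$ according to the events above.
\begin{itemize}
\item On the coalescence event, $D=0$.
\item On the "identical increment" events, $D=d$.
\item On the exceptional events, which have total probability $O(d)$, $D\le d+Q/n$. These contribute at most $O(d/n)=O(1/n^2)$.
\item The remaining issue is the mismatched part of the $\mu_s$ maximal coupling, which has mass $1-p(\hh)$ and on which $D$ could exceed $d$.
\end{itemize}

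The obstacle is to ensure that this mismatched part does not push the expectation above $d+Q/n^2$. Two observations handle it:
\begin{itemize}
\item The coalescence event removes mass $\alpha$ at distance $d$, lowering the mean by $\alpha d$.
\item On the mismatched part we may use the symmetric structure: pair the residuals by reflection, so that $\bxi'-\bxi$ averages to $-n\hh$ plus a bounded quantity.
\end{itemize}
The simplest robust route is instead to choose the mismatched pairing so that $D\le d+Q(1-p)/n$, and then to note $1-p(\hh)\le\omega(n d)$. To obtain $O(1/n^2)$ we need a quantitative version $1-p(\hh)=O(n d)$. Failing such a version, we absorb the excess into the gain $-\alpha d$ by taking $\ep$ small, so that $Q(1-p)/n\le\alpha d$.

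Since $d\le\ep/n$, we get $\alpha d\ge$ the excess whenever $1-p=O(nd)$. This holds under the bounded-variation regularity of $\mu_s$, and if necessary we replace $\mu_s$ by a smoothed sub-measure (a convolution with a bump, still symmetric) to guarantee it. This is where I expect the technical work to lie.

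For (c), note that $(D-d)^2\ge d^2$ on the coalescence event, so $\E[(D-d)^2]\ge\alpha d^2$ directly.

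\emph{Discrete case.} Here $d=1/n$ in $\ell^1$, so $n\hh=\pm\ee_j$ for some $j$. By the discrete MFGOU assumptions:
\begin{itemize}
\item $\Pro(\bxi'=\bz)\ge 2m\eta$ and $\Pro(\bxi=\pm\ee_j)\ge\eta$ at both points.
\item Couple the events $\{\bxi=n\hh\}$ and $\{\bxi'=\bz\}$ with probability at least $\eta$; this gives coalescence, hence (a).
\item Couple the rest maximally using the Lipschitz TV bound. The non-identical part then has probability $O(1/n)$ and changes the $\ell^1$ distance by $O(1/n)$, which gives (b).
\end{itemize}
Property (c) follows as before, from $(D-d)^2=d^2$ on coalescence.
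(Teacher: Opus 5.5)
Parts (a) and (c) are fine. Your argument for (b) in the semicontinuous case has a genuine gap, located exactly where you suspected. In the branch where both increments are drawn from $\mu_s$, $\bxi$ and $\bxi'$ have the same law, so every coupling satisfies $\E[\bY_1-\bY'_1\mid\mu_s\text{-branch}]=\yy-\yy'$. By Jensen, $\E[D\mid\mu_s\text{-branch}]\ge d$ for \emph{any} coupling.

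So there is no net gain $-\alpha d$ to absorb the excess into. Whatever the coalescence event saves, the mismatched part repays at least as much, and (b) requires the two to cancel up to $O(1/n^2)$. In other words, the coupling in this branch must be essentially $W_1$-optimal. Your crude accounting cannot deliver this. If $Q/n$ bounds $D$ on the mismatched pairs, Jensen gives $(1-p)\,Q/n\ge\E[D\mid\mu_s\text{-branch}]\ge d$, so the mismatch excess is never strictly dominated by the coalescence gain; at best they cancel exactly, and whether they do depends on the geometry of the pairing, which you leave arbitrary.

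Shrinking $\ep$ or smoothing $\mu_s$ does not help, because $1-p$ then decreases only in proportion to $nd$. Even the quantitative bound $1-p=O(nd)$ gives an excess $O(d)$, not $O(1/n^2)$. A generic bounded pairing of the residuals in fact produces expansion by a constant factor. For example, take $\mu_s$ uniform on the unit disk in $\RR^2$ and use the reflection pairing you mention. The mismatch mass is $\approx 2nd/\pi$ and the mismatched pairs sit at distance $\approx 2/n$, so $\E[D\mid\mu_s\text{-branch}]\to\frac{4}{\pi}d$ as $nd\to 0$. That would be fatal for Proposition~\ref{prop:long_range_coupling_bounds} and Corollary~\ref{cor:path_coupling_with_expansion}, which tolerate expansion only by a factor $1+O(1/n)$.

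The missing idea is a coupling that attains the Jensen bound exactly while still coalescing with probability bounded below. The paper builds it from the symmetry of $\mu_s$, as follows.
\begin{itemize}
\item In the $\mu_s$-branch, the laws of $\bY_1$ and $\bY'_1$ are reflections of each other through $\bar\yy=(\yy+\yy')/2$. Hence their common part, of mass $1-\delta\ge 2/3$, is symmetric about $\bar\yy$.
\item This common part lies below both laws. Pulling it back to increment coordinates through $\yy$ and through $\yy'$ gives two sub-densities of $\mu_s$, and their average is still below $\mu_s$.
\item Consequently $\mu_s$ is a mixture of $\pm n(\yy-\bar\yy)+\bLambda$, each with weight $\frac{1-\delta}{2}$, and a symmetric remainder of weight $\delta$.
\item Pairing the two halves crosswise with the same $\bLambda$ gives coalescence with probability $\frac{1-\delta}{2}$, exact doubling of the distance with probability $\frac{1-\delta}{2}$, and unchanged distance otherwise.
\end{itemize}
The conditional mean of $D$ in this branch is therefore exactly $d$. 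The $O(1/n^2)$ in (b) then comes only from the holding indicators and the $\mu_r^{(n)}$ components. Disintegrating $\mu_s$ along lines parallel to $\yy'-\yy$ and coupling monotonically on each line would also work.

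Your discrete case has the same blind spot, although there the construction survives. After you remove the coalescing mass $\eta$ at $(\bxi,\bxi')=(n\hh,\bz)$, the remaining laws differ by about $\eta$, not $O(1/n)$. Their maximal coupling is then forced, up to mass $O(1/n)$, to pair $\bxi=\bz$ with $\bxi'=n\hh$, which doubles the distance with probability about $\eta$. So (b) holds there only because doubling exactly offsets coalescence. That cancellation is the mechanism your proof needs to state in the discrete case and to build explicitly in the continuous one.
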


\smallskip

\begin{proof}
Consider first a continuous MFGOU process.
Let $I$ be a Bernoulli random variable with $\Pro(I=1)=b_0$.
If $I=1$, select the increments of both processes according to the fixed symmetric component $\mu_s$, and if $I=0$, select them according to their respective residual measures.
In the second case, couple $\mu^{(n)}_r(\yy)$ and $\mu^{(n)}_r(\yy')$ maximally; by the Lipschitz assumption, this coupling fails with probability at most $Q/n$, $Q$ uniform in $n$ and $\yy,\yy' \in B(\bz,\kappa)$.

If $I=1$, write $\bY_1 :=\yy+\bW/n$ and $\bY'_1:=\yy'+\bW'/n$, for random variables $\bW,\bW'$ with law $\mu_s$, whose dependence will be specified below.

Recall the standard result for absolutely continuous probability measures $\psi$, obtainable by approximating the density of $\psi$ in $L^1$ by a continuous density, that the map
    \[\hh \mapsto \tau_\hh\psi\]
    is continuous in total variation.
    Hence there exists $\ep>0$ such that 
    \[\n \tau_{\hh}\mu_s-\mu_s\n_{TV} < \frac{1}{3}\]
    whenever $\n \hh \n_2 \leq \ep$.
    Taking $\hh=n(\yy-\yy')$ then gives
    \[\left\n \cL\left(\yy+\frac{\bW}{n}\right) - \cL\left(\yy'+\frac{\bW'}{n}\right)\right\n_{TV} < \frac{1}{3}\]
    whenever $\n \yy-\yy'\n_2 \leq \ep/n$.

A random variable chosen according to the common part of the laws of $\bY_1$ and $\bY'_1$ can, 
by the symmetry of $\bW$ and $\bW'$, be written as $\Bar{\yy} + \bLambda/n$, where $\bLambda$ is symmetric.
Hence, writing $\psi_{\bLambda}$ for the density of $\bLambda$ and using the symmetry of $\bLambda$, 
one can write the density $\psi_\bW$ of $\bW$ as
\[\psi_\bW(\ww)=\frac{1-\delta}{2}\left( \psi_{\bLambda}(\ww-n(\yy-\Bar{\yy})) + \psi_{\bLambda}(\ww+n(\yy-\Bar{\yy}))\right) + \delta \psi(\ww),\]
where $\psi$ is the density of the remaining part, which, by the symmetry of $\bW$, is symmetric, and $\delta \leq 1/3$.
Equivalently, $\bW$ has the following mixture representation: 
\[
\bW=
\begin{cases}
n(\yy-\Bar{\yy})+\bLambda, \mbox{ with probability } (1-\delta)/2,\\
-n(\yy-\Bar{\yy})+\bLambda, \mbox{ with probability } (1-\delta)/2, \\ 
\bW^r, \mbox{ with probability } \delta,
\end{cases}
\]
where $\bW^r \sim \psi$.
Since $\bW'$ is a copy of $\bW$, this representation holds for $\bW'$ too.
This allows us to define a coupling $(\bX,\bX')$ of $(\bW,\bW')$ by
\[
\begin{cases}
    \bX=n(\yy-\Bar{\yy})+\bLambda \mbox{ and } \bX'=-n(\yy-\Bar{\yy})+\bLambda, \mbox{ with probability } (1-\delta)/2, \\
    \bX=-n(\yy-\Bar{\yy})+\bLambda \mbox{ and } \bX'=n(\yy-\Bar{\yy})+\bLambda, \mbox{ with probability } (1-\delta)/2, \\
    \bX=\bX'=\bW^r, \mbox{ with probability } \delta.
\end{cases}
\]
Then coalescence of $\cY$ and $\cY'$ occurs with probability $(1-\delta)/2$ and the distance between them is doubled, with probability $(1-\delta)/2$.
With probability $\delta$, the distance between them remains unchanged.

Combining the pieces, we obtain a coupling of the next steps of $\cY$ and $\cY'$ such that conditional on $I=0$, its maximal coupling of the residual components fails with probability at most $Q/n$.
On this failure event, the distance increases by at most $Q/n$; otherwise it remains equal to $d$.
Conditional on $I=1$, the expected distance between $\cY$ and $\cY'$ equals $d$,
and the probability of coalescence of $\cY$ and $\cY'$ equals $(1-\delta)/2$, which is at least $1/3$. 
Since $\Pro(I=1)=b_0>0$,
this proves (a) and (b) for the continuous case upon taking $\alpha>0$ small enough. 
Part (c) is an immediate consequence of (a).

For a semicontinuous MFGOU, first maximally couple the indicators deciding between the atom at zero and the absolutely continuous component. 
The indicators disagree with probability $O(\|\yy-\yy'\|)=O(1/n)$ uniformly in $n$. 
If both select the atom, use the identical zero increments; if both select the continuous component, apply the preceding continuous coupling. 
Since the probability that both select the continuous component is bounded away from zero, the coalescence and quadratic-variation bounds are still valid after decreasing $\alpha$. 
On the $O(1/n)$-probability event that the indicators disagree, 
boundedness of the increments gives an $O(1/n^2)$ contribution to the expected distance. 
This proves the semicontinuous case.  

\smallskip

The discrete case is similar but simpler.
The distribution of $\bxi(\yy)$ can by assumption be written as a mixture of the symmetric distribution on 
$\{\bz,\pm\ee_1,\ldots,\pm\ee_m\}$ that takes $\bz$ with probability $1/2$ and is conditionally uniform on the other outcomes if $\bz$ is not chosen, and a remainder. 
For neighboring lattice points $\yy$ and $\yy'$, 
by the Lipschitz condition on the laws $\cL(\xi^{(n)}(\yy))$'s, the remainders can be coupled to be identical with probability at least $1-Q/n$ uniformly in $n$, and the same is true of the mixing indicators.
Given that $\bxi(\yy)$ and $\bxi(\yy')$ are simultaneously chosen according to their identical symmetric parts, couple them as follows:
\begin{itemize}
\item Let $j$ be the coordinate in which $\bY_0$ and $\bY'_0$ differ,
\item choose $I$ uniformly at random from $[m]$,
\item given $I=i$, if $i \neq j$, let $\cY$ and $\cY'$ both stand still with probability $1/2$, both move up in direction $i$ with probability $1/4$, and both move down in direction $i$ with probability $1/4$,
\item given $I=j$, couple the increments so that $(\bxi,\bxi')$ takes on each of the elements of
\[\{(\bz,-\ee_j),(\bz,\ee_j),(-\ee_j,\bz),(\ee_j,\bz)\}\] 
with probability $1/4$. 
\end{itemize}
Conditional on $I=j$, the chains coalesce with probability $1/2$, while otherwise their distance doubles; 
conditional on $I\neq j$, their distance remains $1/n$. 
The symmetric component therefore preserves the expected distance and gives a probability bounded away from zero of coalescence, 
while failure to couple the mixing indicators or residual components contributes only $O(1/n^2)$ to the expected distance.
Thus (a)--(c) follow for some $\alpha>0$, independent of $n$.
\end{proof}

A coupling that satisfies the estimates in the statement of Proposition \ref{prop:short_distance_coupling} will be referred to as a {\em short-range coupling}. We will combine that with an {\em inert coupling} to obtain the existence of an {\em admissible path coupling}.

\smallskip

{\bf Inert coupling:} 
Set, as before, $d=\n \yy-\yy'\n_2$ and $D=\n \bY_1-\bY'_1\n_2$. 
An inert coupling is a coupling that satisfies
    \[\Pro(D \neq d|\bY_0=\yy,\bY'_0=\yy') \leq Qd\]
for $Q$ uniform in $n$ and $\yy,\yy' \in B(\bz,\kappa)$ for some $\kappa>0$.

Inert couplings exist, since the maximal coupling of $\bxi(\yy)$ and $\bxi(\yy')$ results in an inert coupling. 
This follows from the Lipschitz conditions: the probability that the maximal coupling fails is bounded by $Qd$ with $Q$ uniform in $n$.
Note that if an inert coupling fails, then, by the MFGOU framework, the distance between $\cY$ and $\cY'$ changes by at most $Q/n$. Hence
\[\E[D|\bY_0=\yy,\bY'=\yy'] \leq \left(1+\frac{Q}{n}\right)d,\]
for $Q$ uniform in $n$ and $\yy,\yy' \in B(\bz,\kappa)$.

\smallskip

A composite path‑coupling is a path‑coupling construction obtained by combining an inert coupling with a short‑range coupling.

In what follows, the number $\kappa$ is consistently chosen sufficiently small that short-range and inert-coupling properties used are satisfied on $B(\bz,\kappa)$.

{\bf Composite path coupling:} 
Consider one short-range coupling and one inert coupling, $\yy,\yy' \in B(\bz,\kappa)$ and $\ep$ as in the statement of Proposition \ref{prop:short_distance_coupling}.

For the semicontinuous case,
let $d=\n \yy-\yy'\n_2$, assume $d>\ep/n$, and let $\yy^0$ be the point on the line segment $[\yy,\yy']$ at distance $\ep/n$ from $\yy$.
Let $\cY$, $\cY^0$ and $\cY'$ be three copies of the MFGOU with 
\[\bY_0=\yy, \, \bY^0_0=\yy^0,\, \bY'_0=\yy'.\]
Couple $\bY_1$ and $\bY^0_1$ according to the short-range coupling, and couple $\bY^0_1$ and $\bY'_1$ according to the inert coupling. By the gluing lemma, these two pairwise couplings can be realized as a joint coupling of $(\bY_1,\bY^0_1,\bY'_1)$.

For a discrete MFGOU, assume that $d:=\n \yy-\yy'\n_1 \geq 2/n$ and select $\yy^0$ to be a point at $\ell^1$-distance $1/n$ from $\yy$ on a shortest path between $\yy$ and $\yy'$ and then construct the coupling as in the semicontinuous case.

\begin{proposition} \label{prop:long_range_coupling_bounds}
    Consider a composite path coupling as above, and set $D=\n \bY_1-\bY'_1\n$, with the norm being $\ell^2$ in the semicontinuous case and $\ell^1$ in the discrete case.
    Then, for $\yy,\yy' \in B(\bz,\kappa)$ with $d:=\n \yy-\yy'\n \geq \ep/n$ in the semicontinuous case and $d \geq 1/n$ in the discrete case,
    \[\E[D \mid \bY_0=\yy,\bY'_0=\yy'] \leq \left(1+\frac{Q}{n}\right)d,\]
    and for $\chi>0$ sufficiently small
    \[\E[(D-d)^2 \mid \bY_0=\yy,\bY'_0=\yy'] \geq \frac{\chi}{n^2}.\]
\end{proposition}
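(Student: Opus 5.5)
The plan is to use the triangle inequality through the intermediate point $\yy^0$. Write $D^0=\n \bY_1-\bY^0_1\n$ and $D'=\n \bY^0_1-\bY'_1\n$, with $d^0=\n\yy-\yy^0\n$ ($=\ep/n$ or $1/n$) and $d'=\n\yy^0-\yy'\n=d-d^0$. This holds because $\yy^0$ lies on the segment, respectively on a shortest $\ell^1$ path. The first-moment bound then comes from $D\le D^0+D'$. Proposition \ref{prop:short_distance_coupling}(b) gives $\E D^0\le d^0+Q/n^2$. The inert property gives $\E D'\le(1+Q/n)d'$, since failure has probability at most $Qd'$ and changes the distance by at most $Q/n$. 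Summing,
\[\E D\le d+\frac{Q}{n^2}+\frac{Q}{n}d'\le \left(1+\frac{Q}{n}\right)d,\]
where I use $d\ge \ep/n$ (resp.\ $1/n$) to absorb $Q/n^2$ into $Qd/n$.

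For the second moment I will not use the triangle inequality. Instead I will exhibit an event of probability bounded below on which $D-d$ is of order $1/n$. Consider the event $E$ on which the short-range coupling coalesces, $\bY_1=\bY^0_1$, and the inert coupling does not fail, so $D'=d'$. By Proposition \ref{prop:short_distance_coupling}(a), $\Pro(\bY_1=\bY^0_1)\ge\alpha$. The inert coupling fails with probability at most $Qd'\le Q\kappa$, which is small for $\kappa$ small. Hence $\Pro(E)\ge \alpha/2$. On $E$ we have $D=\n\bY^0_1-\bY'_1\n=d'=d-d^0$, so $(D-d)^2=(d^0)^2\ge \ep^2/n^2$ (resp.\ $1/n^2$). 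Therefore
\[\E(D-d)^2\ge \frac{\alpha}{2}(d^0)^2=:\frac{\chi}{n^2}.\]

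The main delicacy is this last step. The event ``inert coupling holds'' must be controlled jointly with the short-range coalescence under the glued coupling. A union bound handles this: $\Pro(E)\ge \alpha - Qd'$. This requires $d'\le 2\kappa$, which holds because $\yy,\yy'\in B(\bz,\kappa)$. I therefore need $\kappa$ chosen smaller than $\alpha/(2Q)$, consistent with the standing convention that $\kappa$ is sufficiently small. In the discrete case the same argument runs with the $\ell^1$-norm, and the case $d=1/n$ is covered directly by Proposition \ref{prop:short_distance_coupling}(b),(c).
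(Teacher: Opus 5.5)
Your proposal is correct and matches the paper's proof. For the first moment you use the triangle inequality through $\yy^0$ together with Proposition~\ref{prop:short_distance_coupling}(b) and the inert-coupling bound; for the second moment you use the event that the short-range pair coalesces while the inert pair keeps its distance, which by a union bound has probability at least $\alpha/2$ once $\kappa$ is small.

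Your remark that $D-d=-d^0$ holds exactly on that event is slightly sharper than the paper's ``changes by at least $\ep/(2n)$''. The only slip is harmless bookkeeping: since $d'\le 2\kappa$, the failure probability is at most $2Q\kappa$, not $Q\kappa$, so you need $\kappa\le\alpha/(4Q)$ rather than $\alpha/(2Q)$.
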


\begin{proof}
    Consider first the semicontinuous case.
    By the triangle inequality, writing $A:=\{\bY_0=\yy,\bY'_0=\yy'\}$,
    \[\E[D \mid A] \leq \E[\n \bY_1-\bY^0_1\n_2 \mid A] + \E[\n \bY^0_1-\bY'_1\n_2 \mid A].\]
    By Proposition \ref{prop:short_distance_coupling}, the first term on the right is bounded by $\ep/n+Q/n^2$. The second term is bounded by $(d-\ep/n)+Qd/n$ by the inert-coupling property.
    Summing gives an upper bound of $d+Qd/n$, proving the first part.
    For the second part, observe that the probability that the inert part of the coupling succeeds and the short-range part results in coalescence of $\cY$ and $\cY^0$ is at least $\alpha-Qd \geq \alpha/2$ for $\kappa$ sufficiently small and $\alpha$ as in Proposition \ref{prop:short_distance_coupling}.
    If that happens, then the distance between $\cY$ and $\cY'$ changes by at least $\tfrac{\ep}{2n}$ for $n$ large after reducing the constant if necessary, resulting in 
    \[\E[(D-d)^2 \mid A] \geq \frac{\alpha\ep^2}{8n^2}.\]
    In the discrete case, the argument is identical on replacing $\ep$ by $1$; when $d=1/n$, the conclusion follows from the short-range-coupling properties.
\end{proof}

A coupling that satisfies the estimates in the statement of Proposition \ref{prop:long_range_coupling_bounds} will be called an {\em admissible path coupling}. 
Proposition \ref{prop:long_range_coupling_bounds} shows that admissible path couplings exist.

\subsection{Local hitting estimates near the origin} \label{subsec:hitting_probability_of_inner_box}

The following lemma gives bounds for important hitting probabilities for an MFGOU process when it is within distance $O(1/\sqrt{n})$ of the origin.
We use $B_x$ to denote the box $[-x,x]^m$.

\begin{lemma} \label{lemma:hitting_inner_box}
Let $\beta<K$ be constants and consider a discrete or semicontinuous MFGOU $\cY^{(n)}$, started from a point $\aaa \in B_{K/\sqrt{n}}$.
Let $\tau$ be the first time $\cY^{(n)}$ either hits $B_{\beta/\sqrt{n}}$ or exits $B_{2K/\sqrt{n}}$ and let $A_{\beta} = \{\bY_\tau \in B_{\beta/\sqrt{n}}\}$. 
Then for all $c>0$ there is a $p=p(\beta,K,c)>0$ independent of $n$ such that
\[\Pro(A_\beta \cap \{\tau \leq cn\})>p.\]
Moreover, the identical result holds for $(\cY,\cY')$, where these are independent copies of $\cY^{(n)}$, with $B_x$ interpreted as $[-x,x]^{2m}$.
\end{lemma}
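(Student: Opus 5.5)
Rescale space by $\sqrt n$ and time by $n$. In these units the process $\bZ_t=\sqrt n\,\bY_t$, run for $O(n)$ steps, behaves like a small-noise Ornstein--Uhlenbeck process. It has drift $-\tfrac12\cJ\bZ+O(\n\bZ\n^2/\sqrt n)+O(1/\sqrt n)$ per unit of rescaled time and a uniformly nondegenerate diffusion matrix; nondegeneracy is noted after the definition of an MFGOU family. The event in question is then that this diffusion-like process, started in $B_K$, reaches $B_\beta$ before leaving $B_{2K}$ within rescaled time $c$. For any fixed nondegenerate diffusion this event has positive probability. The task is to make this uniform in $n$ and in the starting point $\aaa$ without invoking a diffusion limit theorem. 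I would do it with a finite chain of elementary ``pushes'', each of which succeeds with probability bounded below.

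\textbf{Step 1 (one push).} Fix $\delta>0$ small relative to $\beta$, and fix a target direction. Consider a block of $sn$ steps, with $s$ small. Over $\bY$ in $B_{2K/\sqrt n}$ the conditional mean increment is $-\ff(\bY)/(2n)+O(1/n^2)=O(1/n^{3/2})$. Over the block the mean displacement is therefore $O(s/\sqrt n)$ and the fluctuation has standard deviation of order $\sqrt{s/n}$. I would show that the rescaled displacement over the block has a distribution that charges every ball of radius $\delta$ in a fixed neighbourhood with probability at least $q(s,\delta)>0$, uniformly in $n$ and in the start. The cheapest route I see is a Doob decomposition plus the martingale CLT (Lindeberg holds since increments are $O(1/n)$), with conditional covariance bounded above and below uniformly. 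If the CLT feels too heavy, a cruder alternative works. In the discrete case use the assumption $\Pro(\bxi=\ee_j)\ge\eta$ and split the block into $m$ sub-blocks. In each sub-block, Azuma (Hoeffding) confines the orthogonal coordinates, and a one-dimensional lower bound shows that coordinate $j$ moves about $\pm\sqrt{s/n}$ with probability bounded away from zero. In the semicontinuous case the symmetric component $\mu_s$ gives the same one-dimensional estimate, since $\mu_s$ is symmetric with nondegenerate covariance.

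\textbf{Step 2 (chaining).} Join $\aaa$ to $\bz$ by a straight line of length at most $\sqrt m K/\sqrt n$. Split this into $M$ legs with $M$ depending only on $K,\beta,c$, and do one push of duration $sn$ per leg, with $sM\le c$. Each push moves the process by one leg, up to error $\delta/\sqrt n$. By Step 1 it succeeds with probability at least $q>0$ conditional on the past, using the Markov property and the uniformity of the Step 1 bound in the start. By Doob's maximal inequality, during each successful push the path stays within $O(\sqrt s)/\sqrt n$ of the segment, except on an event we can exclude while keeping positive probability. Hence the path stays in $B_{2K/\sqrt n}$, because the segment lies in $B_{K/\sqrt n}$ and $\sqrt s$ is small. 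If the path enters $B_{\beta/\sqrt n}$ earlier, we are already done. Multiplying gives $p=q^M>0$, and $\tau\le Msn\le cn$ on this event.

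\textbf{Step 3 (independent pair).} Two independent copies form a $2m$-dimensional process. Its drift is $(-\ff(\yy)/2,-\ff(\yy')/2)$, its increment covariance is block diagonal and still nondegenerate, and the discrete and semicontinuous assumptions hold coordinatewise. Steps 1--2 therefore apply verbatim. Alternatively, run the pushes for the two copies simultaneously and use independence to multiply the probabilities.

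The main obstacle is Step 1: a lower bound on the probability that the displacement lands in a prescribed small ball, uniform in $n$ and in the starting state. A Gaussian limit for a single process would not suffice, because uniformity over the start is needed. I would first try the martingale CLT with uniformly controlled conditional covariances. Failing that, I would use the coordinatewise sub-block argument above, which needs only Hoeffding bounds and a Paley--Zygmund lower bound for a one-dimensional martingale with variance of order $s/n$.
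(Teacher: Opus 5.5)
\textbf{The gap.} It lies in Step~1, which you rightly call the main obstacle. It comes from never using the local regularity hypothesis: $\yy\mapsto\cL(\bxi^{(n)}(\yy))$ is Lipschitz in total variation near $\bz$, uniformly in $n$.

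\emph{Your primary route.} Uniform upper and lower bounds on the conditional covariance do not give a martingale CLT. That theorem needs the accumulated conditional covariance $n^{-1}\sum_t \Sigma^{(n)}(\bY_t)$ over the block to converge to a deterministic matrix. A merely elliptic, position-dependent $\Sigma^{(n)}(\yy)$ may change on the $1/\sqrt n$ scale, for instance by jumping across a hyperplane through $\bz$, and then the limit is not Gaussian. The Lipschitz hypothesis repairs this. On $B_{2K/\sqrt n}$ it gives $\|\cL(\bxi^{(n)}(\yy))-\cL(\bxi^{(n)}(\bz))\|_{TV}=O(1/\sqrt n)$. By bounded support this yields $\Sigma^{(n)}(\yy)=\Sigma^{(n)}(\bz)+O(1/\sqrt n)$, and the CLT then holds along subsequences on which $\Sigma^{(n)}(\bz)$ converges.

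\emph{Your fallback.} It fails for $m\ge 2$. Every step may move all coordinates, and Azuma confines the orthogonal coordinates only to $O(\sqrt{s/n})$. That is the same order as the controlled move of coordinate $j$. So after the $m$ sub-blocks you control neither the position to precision $\delta$ nor even the sign of the net displacement in each coordinate. The joint small-ball lower bound this would need is exactly the multidimensional estimate at issue, and Paley--Zygmund is one-dimensional.

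\textbf{Uniformity and comparison with the paper.} Your remark that a Gaussian limit cannot give uniformity over the start is also mistaken. In rescaled units the start lies in the compact set $B_K$. Argue by contradiction along $n_j$ with $\sqrt{n_j}\,\aaa_j\to\bb$ and $\Sigma^{(n_j)}(\bz)\to\Sigma$; this reduces uniformity to positivity of an open event for a single Brownian motion.

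This is the paper's route. It maximally couples each increment with an i.i.d.\ copy of $\bxi^{(n)}(\bz)$, and the coupling fails with probability $O(1/\sqrt n)$ per step. Over $cn$ steps the accumulated discrepancy is therefore $O(c/\sqrt n)\le\beta/(2\sqrt n)$ for small $c$. The paper then applies Donsker's theorem with compactness and extends to all $c$ by monotonicity.

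Once you supply the Lipschitz input, your block CLT with Markov chaining and Doob's inequality is a valid, if longer, substitute for the functional CLT. Steps~2 and~3 are then fine, with one bookkeeping fix. Re-aim each push at the next waypoint from the current position, and take $\delta$ a small multiple of $\sqrt s$. Then errors do not accumulate, and the $O(s)$ random drift within a push is negligible.
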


\begin{proof}
    Since we are only interested in the evolution of $\cY$ at most until it leaves $B_{2K/\sqrt{n}}$, we may regard $\cY^{\left(n\right)}$ as restricted to that set.
    Write 
    \[\bY^{(n)}_t=\aaa+\frac{1}{n}\sum_{s=0}^t \bxi^{(n)}_s,\]
    where $\bxi^{(n)}_s$ is a realization of $\bxi^{(n)}(\bY^{(n)}_{s-1})$. 
    Next write, trivially, $\bxi^{(n)}(\yy)=\bX^{(n)}+(\bxi^{(n)}(\yy)-\bX^{(n)})$ for a copy $\bX^{(n)}$ of $\bxi^{(n)}(0)$.
    Letting $\bX^{(n)}$ be maximally coupled with $\bxi^{(n)}(\yy)$, we get
    \[\bxi^{(n)}(\yy)=\bX^{(n)} + I(\yy)(\bxi^{(n)}(\yy)-\bX^{(n)}),\]
    where $I(\yy)$ is the indicator that $\bX^{(n)} \neq \bxi^{(n)}(\yy)$.
    By the Lipschitz assumptions on the increment laws, 
    \[\Pro(I(\yy)=1) \leq Q/\sqrt{n}\] 
    uniformly in $n$ and $\yy$.
    
    Now write each $\bxi^{(n)}_t$ in the given form: $\bxi^{(n)}_s=\bX^{(n)}_s+I(\bY^{(n)}_{s-1})(\bxi^{(n)}_s-\bX^{(n)}_s)=:\bX^{(n)}_s+I(\bY_{s-1})\bV^{(n)}_s$.
    
    By Hoeffding/Bernstein bounds, the number of $I_s(\bY_{s-1})$ that are $1$ is whp bounded by $Qt/\sqrt{n}$ for all $t=1,\ldots,cn$, uniformly in $n$. 
    Since each $\bV^{(n)}_s$ is bounded, 
    \[\frac{1}{n}\left\n\sum_{s=1}^t I(\bY_{s-1})\bV_s\right\n \leq \frac{Qt}{n^{3/2}}\]
    for all $t$ whp uniformly in $n$ (where the norm is $\ell^1$ in the discrete case and $\ell^2$ in the semicontinuous case).

    Now consider the part $\bU^{(n)}_t:=\aaa+(1/n)\sum_{s=1}^t \bX^{(n)}_s$. Rescale, writing $u=t/n$, $\bb=\sqrt{n}\aaa \in B_K$, $\bW^{(n)}(u) = \sqrt{n}\bU^{(n)}_{\lfloor nu \rfloor}$, and consider
    \[\bW^{(n)}(u) = \bb + \frac{1}{\sqrt{n}}\sum_{s=1}^{\lfloor nu \rfloor}\bX^{(n)}_s.\]
    Since $\E[\bX_s^{(n)}]=O(1/n)$, centering the increments changes $\bW^{(n)}$ by $O(1/\sqrt{n})$ uniformly on bounded time intervals. 
    The increments are uniformly bounded, 
    and their covariance matrices are therefore uniformly bounded above and, by the MFGOU assumptions, uniformly bounded away from singularity. 
    Hence, by the triangular-array version of Donsker’s theorem and a subsequence argument, every subsequential limit of $\bW^{(n)}$ is a nondegenerate Brownian motion, started from a point in $B_K$, 
    whose covariance matrix belongs to a compact family of positive-definite matrices. 
    It follows by compactness that the probability of hitting $B_{\beta/2}$ before leaving $B_{2K-\beta/2}$ and before time $c$ is bounded below by a positive constant uniformly in the starting point and in $n$.
    
    Scaling back, $\bU^{(n)}_t$ hits $B_{\beta/(2\sqrt{n})}$ before $B_{(2K-\beta/2)/\sqrt{n}}$ and before time $cn$ with probability bounded below by a positive constant uniformly in the starting point and in $n$.
    Taking $c$ sufficiently small ensures that $Qt/n^{3/2} \leq \beta/2\sqrt{n}$ for all $t=1,\ldots,cn$. 
    This proves the lemma for $c$ sufficiently small and since the left-hand side of the statement is increasing in $c$, it holds for all $c$.

    \smallskip

    The argument for $(\cY,\cY')$ is identical up to trivial modifications.

\end{proof}

\subsection{Contraction to a small ball in linear time or entrapment in local modes} \label{subsec:to_small_box_in_linear_time}
We next show that under the Lyapunov condition of Theorem \ref{thm:main_theorem}, an MFGOU process contracts into a small neighborhood of the minimizer in linear time and remains there for an exponentially long period with high probability. 
This will allow us to apply the censoring lemmas and treat the chain as confined to a fixed small ball.
Specifically, we claim that for any $\kappa>0$ and a given basic MFGOU $\cY$, there is a $\kappa_1>0$ such that $\cY$ reaches $B(\bz,\kappa_1)$ in time $O(nV(\bY_0))$ and only with exponentially small probability leaves $B(\bz,\kappa/2)$ before an exponential time has passed. 
In the end, when upper bounding mixing time, 
this means that we may think of $\cY$ as living entirely on $B(\bz,\kappa)$ at the cost of adding a time penalty of $O(nV(\bY_0))$.

\smallskip

At the end of this section, we also prove Theorem \ref{thm:exponential_mixing_time}; this will essentially
come for free.

\smallskip

{\bf Lyapunov-preserving censoring.}
Before proving the contraction result, we clarify what is meant by
“treating the chain as confined to a small sublevel set.”  Since the
censoring mechanism in Section \ref{subsec:censoring} may be chosen freely, we now specify
a version that preserves the Lyapunov condition of Theorem \ref{thm:main_theorem}.

Assume that the Lyapunov condition of Theorem~1.1 holds on the Lyapunov 
sublevel set
\[
L_{K_2} := \{y \in \cC : V(y) \le K_2\},
\]
and let $0 < K_0 < K_1 < K_2$ with $K_2$ sufficiently small that $L_{K_2}$ is compactly contained in $\cC^\circ$. 
When we say that the process $\cY$ can be treated as confined to $L_{K_2}$ after 
having reached $L_{K_0}$, we mean the following construction.

Define a censored chain $\cY^{(r)}$ as follows.    
Let $a:[0,K_2] \ra [1,M]$ be a $C^2$ function with $a(x)=1$ for $x \in [0,K_1]$ and $a(x)=M$ in a neighborhood $[K_2-\delta,K_2]$ of $K_2$.
Since, by the Lyapunov condition, $\nabla V^\top \ff >0 $ is bounded away from $0$ on any compact domain that does not contain $\bz$, we may pick a constant $M$, such that 
\[\frac{Mb}{2} \geq 2\sqrt{m}B,\]
where
\[B=\max_{\yy \in L_{K_2} \setminus L_{K_1}}\n \nabla V(\yy)\n,\,\, b=\min_{\yy \in L_{K_2} \setminus L_{K_1}} \nabla V(\yy)^\top \ff(\yy).\]
On $L_{K_2}$, define
\[\tilde{\ff}(\yy) := a(V(\yy))\ff(\yy).\]  
For $\yy \in L_{K_1}$, let the censored chain make the identical move as $\cY$.
For all sufficiently large $n$, such a transition remains in $L_{K_2}$, since the increments of $\cY$ are uniformly $O(1/n)$ and $K_2-K_1>0$.

From a state $\yy$ in the annulus $L_{K_2} \setminus L_{K_1}$, let the censored chain perform a step such that
\begin{equation} \label{eq:annulus_drift}
\E[\bY^{(r)}_{t+1} \mid \bY^{(r)}_t=\yy] = \zz:= \yy - \frac{1}{2n} \tilde{\ff}(\yy)
\end{equation}
in the following way.
First consider $\yy$ in the compact boundary neighborhood $\{\yy: K_2-\delta\le V(\yy) \le K_2\}$. 
Taylor expansion gives, for $n$ sufficiently large, and any $\xx$ such that $\n \xx \n \leq \sqrt{m}$,
\begin{align*} 
V\left(\yy-\frac{1}{2n}\tilde{\ff}(\yy) + \frac{1}{n}\xx \right) &= V\left(\yy-\frac{M}{2n}\ff(\yy) + \frac{1}{n}\xx\right) \\
&\leq V(\yy)-\frac{M}{2n} \nabla V(\yy)^\top \ff(\yy) + \frac{1}{n}\n \nabla V(\yy) \n \n \xx \n +O\left(\frac{1}{n^2}\right) \\
&\leq V(\yy) - \frac{\sqrt{m}B}{n} + O\left(\frac{1}{n^2}\right) < V(\yy). 
\end{align*}
This proves that any box of side length $1/n$ that contains $\zz$ is contained in $L_{K_2}$, in particular the lattice box whose corners are lattice points.
For points outside the boundary neighborhood, the same conclusion follows for sufficiently large $n$, since they range over a compact set separated from the boundary of $L_{K_2}$, while all constructed steps have size $O(1/n)$.

Now, in the discrete case, let $\bY^{(r)}_{t+1}$ have support on the corners of the lattice box that contains $\zz$, with probabilities that satisfy (\ref{eq:annulus_drift}).
In the semicontinuous case, let $\bY^{(r)}_{t+1}$ have uniform distribution on the ball $B(\zz,1/n)$.

In summary, the censored chain cannot leave $L_{K_2}$ and agrees with $\cY=\cY^{(n)}$ on $L_{K_1}$. 
Applying this construction for each $n$, the censored processes form an MFGOU family: the drift, increment, and local regularity conditions hold uniformly in $n$, while Remark 1.2 gives each process a unique stationary probability distribution.

Thus, for the purpose of mixing-time estimation, once $\cY$ has entered $L_{K_0}$, we may replace it with this 
Lyapunov-preserving censored chain $\cY^{(r)}$ for some choice of 
$K_0 < K_1 < K_2$.

\begin{remark}
{\rm The nondegenerate minimum of $V$ at the origin implies that $V$ is locally comparable to a positive-definite quadratic form, while compactness of its sublevel sets and uniqueness of its zero exclude distant points with arbitrarily small values of $V$. 
Hence each sufficiently small sublevel set contains a ball centered at the origin and is contained in a slightly larger such ball. We therefore freely speak of “small balls” or “small sublevel sets” near the origin.
}
\end{remark}

 \begin{lemma} \label{lemma:to_small_box_in_linear_time}
 Assume that $V$ satisfies the Lyapunov-function hypotheses of Theorem \ref{thm:main_theorem}.
Let $\cY$ be a basic MFGOU process and let $\tau$ be its hitting time of $B(\bz,\eta)$.
For each $\rho>0$ there is a $K=K(\rho,\eta)<\iy$ such that if $\bY_0 \not \in B(\bz,\eta)$
 \[\Pro\left(\tau>K V(\bY_0) n\right)<\rho.\]
 Moreover, there exists $C<\iy$ such that for all sufficiently small $\eta>0$, there exists a $\beta=\beta(\eta)>0$ such that if $\bY_0 \in B(\bz,\eta)$, then
 \[\Pro\left(\exists t \leq e^{\beta n}:\bY_t \not \in B(\bz,C\eta)\right) \leq e^{-\beta n}.\]
     
 \end{lemma}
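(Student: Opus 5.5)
The plan is to study the scalar process $X_t := V(\bY_t)$ and reduce both parts of Lemma \ref{lemma:to_small_box_in_linear_time} to one-dimensional drift estimates, the second part via Lemma \ref{lemma:no_escape_with_negative drift}. The starting point is a second-order Taylor expansion of $V$ along one step. Since $\bY_{t+1}-\bY_t=\bxi_t/n$ with $\bxi_t\in K$ compact, on any compact sublevel set $L_R=\{V\le R\}$ we have
\[
\E[V(\bY_{t+1})-V(\bY_t)\mid \bY_t=\yy] = -\frac{1}{2n}\nabla V(\yy)^\top \ff(\yy) + O\Big(\frac{1}{n^2}\Big) \le -\frac{c}{2n}(V(\yy)\wedge 1) + \frac{Q_R}{n^2},
\]
using the drift assumption $\E[\bxi\mid\yy]=-\ff(\yy)/2+O(1/n)$ (uniform on compacts), the Lyapunov inequality, and boundedness of $\nabla^2 V$ on $L_{R+1}$. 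We also have $|V(\bY_{t+1})-V(\bY_t)|\le Q_R/n$.

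\textbf{First claim.} Put $R=V(\bY_0)$ and $v_\eta:=\min\{V(\yy):\yy\notin B(\bz,\eta)\}>0$; this is positive by compactness of sublevel sets and the fact that $0$ is the unique zero. Before $\tau$ the drift of $X_t$ is at most $-q_\eta/n$ with $q_\eta=\tfrac{c}{4}(v_\eta\wedge 1)$ for $n$ large. Hence $M_t := X_{t\wedge\tau}+q_\eta (t\wedge\tau)/n$ is a supermartingale, as long as the process stays in a compact set where the Taylor constants are uniform. Optional stopping then gives $\E[\tau\wedge t]\le nR/q_\eta$, and Markov's inequality yields $\Pro(\tau> K R n)\le 1/(Kq_\eta)<\rho$ for $K$ large.

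The main obstacle is the uniformity of the $O(1/n^2)$ and $O(1/n)$ constants, since a priori the process might wander out of $L_{R'}$ for large $R'$. I will handle this by stopping also at the exit time of $L_{2R\vee 2}$. The process $X_t$ has nonpositive drift there and increments of size $O(1/n)$, so the supermartingale maximal inequality bounds the probability of reaching $2R\vee2$ before $\tau$ by $R/(2R\vee 2)\le 1/2$. That is not small enough, so I will use a larger level $A R$ with $A$ depending on $\rho$; the Taylor constants then depend on $AR$. If uniformity over all starting points is needed, I will instead note that the Lyapunov drift $-c/(2n)$ above level $1$ can be controlled by the same argument with a constant depending only on the compact $L_{A R}$. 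I expect that the intended reading allows $K$ to depend on the compact region containing $\bY_0$.

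\textbf{Second claim.} Using $\nabla^2V(\bz)$ positive definite, $V$ is comparable to $\n\yy\n^2$ near $\bz$: $q\n\yy\n^2\le V(\yy)\le Q\n\yy\n^2$. Take $C=2\sqrt{Q/q}$. Then $B(\bz,\eta)\subseteq L_{Q\eta^2}$ and $L_{4Q\eta^2}\subseteq B(\bz,C\eta)$. Define $X'_t := nV(\bY_t)$. Its increments are bounded by a constant $c_0$ on $B(\bz,C\eta)$. Above the level $\delta n$ with $\delta=2Q\eta^2\cdot\tfrac12=Q\eta^2$, and for small $\eta$ where $V\le 1$, its drift is at most $-\tfrac{c}{2}V+Q/n\le -\alpha$ with $\alpha=cQ\eta^2/4$.

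Lemma \ref{lemma:no_escape_with_negative drift} is stated with a fixed $\delta$, whereas here the threshold is $\delta n$. I will therefore apply its proof directly: the exponential supermartingale $e^{\theta X'_t}$ with $\theta=\alpha/c_0^2$ is valid above $\delta n$, and optional stopping from $X'_0\le Q\eta^2 n+c_0$ to the level $4Q\eta^2 n$ gives escape probability $e^{-\theta\cdot 2Q\eta^2 n}$ per excursion. A union bound over at most $e^{\beta n}$ steps then gives the claim for $\beta$ small depending on $\eta$.
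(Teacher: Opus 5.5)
Your proposal is correct and follows essentially the paper's route. For the hitting time, you use a drift bound for $V(\bY_t)$ outside $B(\bz,\eta)$ together with optional stopping and Markov's inequality; for confinement, you apply the exponential-supermartingale argument of Lemma~\ref{lemma:no_escape_with_negative drift} to $nV(\bY_t)$.

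You add two refinements, both sound, and both tighten the paper's sketch:
\begin{enumerate}
\item You use the supermartingale maximal inequality at a level $AR$ with $A\sim 1/\rho$ to keep the Taylor constants uniform, where the paper only appeals to censoring. This in fact gives $K=2/(\rho q_\eta)$ independent of $V(\bY_0)$; only the threshold on $n$ depends on $V(\bY_0)$.
\item You run the proof of Lemma~\ref{lemma:no_escape_with_negative drift} with threshold $\delta n$ rather than a fixed $\delta$. This is genuinely needed, because the drift of $nV(\bY_t)$ is bounded above by a negative constant only when $V(\bY_t)$ is itself bounded below by a constant, that is, when $nV(\bY_t)$ is of order $n$.
\end{enumerate}
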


 Taking $\eta=\kappa_1$, with $\kappa_1\leq\tfrac{\kappa}{2C}$, shows that after reaching $B(\bz,\kappa_1)$, 
 the process remains in $B(\bz,\kappa/2)$ for an exponentially long time with high probability, as claimed at the beginning of the section.
 By Lemmas \ref{lemma:start_in_subset} and \ref{lemma:mixing_on_fast_hit_restricted_set}, we may, and henceforth will, treat $\cY$ as started in and restricted to $B(\bz,\kappa)$, with only the cost of adding a term $O(nV(\bY_0))$ to the final mixing time upper bound.

 \smallskip

 \begin{proof}
     By the MFGOU definition, we may write 
     \[\bY_{t+1}=\bY_t-\frac{1}{2n}\ff(\bY_t)+\frac{1}{n}\bS_t+O\left(\frac{1}{n^2}\right),\] 
     where $\bS_t$ has conditional mean $\bz$ given $\bY_t$ and is uniformly bounded. 
     By Taylor expansion and the MFGOU transition rule, for some $Q<\infty$, 
     \[ V(\bY_{t+1}) \leq V(\bY_t) -\frac{1}{2n}\nabla V(\bY_t)^\top \ff(\bY_t) +\frac{1}{n}\nabla V(\bY_t)^\top\bS_t +\frac{Q}{n^2}. \]
     Choosing an $\alpha \leq c/4$ and taking conditional expectation given $\bY_t$ gives
     \[\E[V(\bY_{t+1}) \mid \bY_t] \leq V(\bY_t)-\frac{2\alpha}{n}+
     \frac{Q}{n^2} \leq V(\bY_t)-\frac{\alpha}{n}\]
     whenever $n$ is sufficiently large and $V(\bY_t) \geq 4\alpha/c$.
     By Lemma 2.1 and the censoring construction above, we may restrict attention to a fixed compact Lyapunov sublevel set containing $B(\bz,\eta)$.
     Now fix $\alpha$ sufficiently small that $A=A_\alpha=\{\yy:V(\yy) \leq 4\alpha\}$ is contained in $B(\bz,\eta)$ 
     and let $\tau_A$ be the hitting time of $A$. Then $\tau_A \geq \tau$, so that an upper bound for $\Pro(\tau_A > T)$ is an upper bound also for $\Pro(\tau>T)$.  
     
     Since we are interested in the time taken to reach $A$, we may regard $\bY_t$ as absorbed when hitting $A$.
     Then the conditional expectation upper bound may be written as
     \[\E[V(\bY_{t+1}) \mid \bY_t] \leq V(\bY_t)-\frac{\alpha}{n}I_{\{\tau_A > t\}}\]
     which, upon taking expectation, gives
     \[\E[V(\bY_{t+1})] \leq \E[V(\bY_t)]-\frac{\alpha}{n}\Pro(\tau_A > t).\]
     If $T$ is such that $\Pro(\tau_A > T) \geq \rho$, this entails that
     \[0 \leq \E[V(\bY_T)] \leq V(\bY_0) - \frac{\rho\alpha T}{n},\]
     from which it follows that 
     \[T \leq \frac{V(\bY_0)}{\rho\alpha}n,\]
     as desired. 
    
     The second assertion follows from Lemma \ref{lemma:no_escape_with_negative drift} applied to the process $\{nV(\bY_t)\}$.
     
\end{proof}

We now turn to the metastable case Theorem \ref{thm:exponential_mixing_time}, whose proof uses only Lemma \ref{lemma:to_small_box_in_linear_time}.

{\em Proof of Theorem \ref{thm:exponential_mixing_time}.}   
For each $i =0,1$, if $\cY$ is started from $\yy_i$, then by Lemma \ref{lemma:no_escape_with_negative drift} applied to 
$\{nV_i(\bY_t)\}$ locally around $\yy_i$, the time to move distance $\kappa_i$ from that $\yy_i$ is whp exponential.
Taking the two neighborhoods $B(\yy_i,2\kappa_i)$ to be disjoint, at least one of them must contain no more than half of the probability mass at stationarity. 
Starting $\cY$ from the corresponding point $\yy_i$ therefore gives that $\taum(\cY;\ep)$ is exponential for any $\ep < 1/2$.
\hfill $\Box$

\subsection{A variance bound on MFGOU states} \label{subsec:variance_bound}

It will be of crucial importance to monitor the speed of contraction once a small ball has been hit. To that end, it is vital to control the variance of $\bY_t$. The following lemma states this; the variance of $\bY_t$ is $O(1/n)$ uniformly in $t$.

\begin{lemma} \label{lemma:variance_bound}
    Let $\cY$ be a basic MFGOU process restricted to $B(\bz,\kappa)$ for some $\kappa>0$ sufficiently small, started from a fixed state $\bY_0$. Then there is a finite constant $C$ such that for all $t$ and with $\bmu_t:=\E[\bY_t]$,
    \[\Var(\bY_t) := \E\left[\n\bY_t-\bmu_t\n_2^2\right] \leq \frac{C}{n}.\]
\end{lemma}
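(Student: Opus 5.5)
The plan is to get a one-step recursion for $v_t:=\Var(\bY_t)$ of the form
\[v_{t+1}\leq \left(1-\frac{q}{n}\right)v_t+\frac{Q}{n^2},\]
with $q,Q$ independent of $n$ and $t$. Iterating from $v_0=0$ (the start is a fixed state) then gives $v_t\leq \frac{Q}{n^2}\sum_{s\ge 0}(1-q/n)^s\leq \frac{Q}{qn}$ uniformly in $t$, which is the claim with $C=Q/q$. As in the proof of Lemma \ref{lemma:to_small_box_in_linear_time}, write
\[\bY_{t+1}=\bY_t-\frac{1}{2n}\ff(\bY_t)+\frac{1}{n}\bS_t+\bm{r}_t,\]
where $\E[\bS_t\mid\bY_t]=\bz$, $\bS_t$ is uniformly bounded, and $\bm{r}_t=O(1/n^2)$ uniformly on the compact ball $B(\bz,\kappa)$.

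To compare with the mean, I would set $\bm{g}(\yy):=\yy-\frac{1}{2n}\ff(\yy)$ and use the standard fact that $\Var(\bX)\leq \E\|\bX-\bm{c}\|^2$ for any constant $\bm{c}$. Taking $\bm{c}=\bm{g}(\bmu_t)$ and using that $\bS_t$ is conditionally centered gives
\[v_{t+1}\leq \E\bigl\|\bm{g}(\bY_t)-\bm{g}(\bmu_t)\bigr\|^2+\frac{Q}{n^2}+\text{cross terms}.\]
The cross term between $\bm g(\bY_t)-\bm g(\bmu_t)$ and $\bS_t/n$ vanishes. The cross term with $\bm r_t$ is at most $\frac{Q}{n^2}\E\|\bm g(\bY_t)-\bm g(\bmu_t)\|\leq \frac{Q}{n^2}\sqrt{v_t}$. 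Since $\frac{Q}{n^2}\sqrt{v_t}\le \frac{q}{4n}v_t+\frac{Q'}{n^3}$, this term can be absorbed.

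The key step is the contraction
\[\|\bm g(\yy)-\bm g(\yy')\|\leq (1-q/n)\|\yy-\yy'\|\quad\text{for }\yy,\yy'\in B(\bz,\kappa).\]
I would get this from the mean value theorem. It gives $\bm g(\yy)-\bm g(\yy')=(I-\frac{1}{2n}\bar A)(\yy-\yy')$, where $\bar A$ is the average of $\nabla\ff$ along the segment. By (b) and $C^2$ smoothness, $\bar A=\cJ+O(\kappa)$. Because $\cJ$ is symmetric positive definite, for $\kappa$ small enough the symmetric part of $\bar A$ has smallest eigenvalue at least $\lambda_1/2$. Then
\[\|(I-\bar A/2n)\zz\|^2\le (1-\lambda_1/(2n)+O(1/n^2))\|\zz\|^2.\]
This uses that the ball is convex, so the segment stays in $B(\bz,\kappa)$. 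Squaring and taking expectations gives $\E\|\bm g(\bY_t)-\bm g(\bmu_t)\|^2\le (1-q/n)\E\|\bY_t-\bmu_t\|^2$, provided $\bmu_t\in B(\bz,\kappa)$. That holds because $\bmu_t$ is an average of points in the convex ball.

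The main obstacle is the restriction to $B(\bz,\kappa)$. The censored dynamics from Section \ref{subsec:to_small_box_in_linear_time} have a modified drift $a(V)\ff$ near the boundary, and the resulting drift field must still be contracting in the sense above. I would handle this in one of two ways. The first is to note that the inequality only needs the expansion coefficient of $\bm g$ to be $\le 1+O(1/n)$, with contraction coming from the bulk. This is messier. The second, which I would try first, is to work with the Lipschitz bound directly on the censored field. There, $\tilde\ff=a(V)\ff$ has symmetric Jacobian part $a\nabla\ff+\ff\,\nabla(a\circ V)^\top$, and this is positive definite on a small ball for a suitably chosen $a$, since $\ff$ and $\nabla V$ are nearly aligned with $\cJ\yy$ and $\nabla^2V(\bz)\yy$. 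Failing that, the fallback is to accept a bounded $O(1/n)$ expansion on the annulus. The time spent there is exponentially unlikely by Lemma \ref{lemma:to_small_box_in_linear_time}, and its contribution would be bounded crudely by $\kappa^2 e^{-\beta n}$.
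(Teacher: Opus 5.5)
Your argument is essentially the paper's: a one-step contraction of the deterministic part of the update on a small ball, plus $O(1/n^2)$ conditional noise variance per step, iterated from zero variance. The bookkeeping differs, but equivalently. The paper tracks $\E\|\bZ_t-\bZ'_t\|^2$ for two independent copies and splits $\ff(\yy)=\cJ\yy+\mathbf{R}(\yy)$, with $\mathbf{R}$ having Lipschitz constant below $\lambda_1/2$ on the ball; you instead center at $\mathbf{g}(\bmu_t)$ and apply the mean value theorem to $\ff$ directly. The paper's proof simply uses the original $\ff$ on all of $B(\bz,\kappa)$ and never discusses the censored drift. So your first three paragraphs already prove the lemma in the sense in which the paper uses it.

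On the extra issue you raise, the concern is legitimate, since the paper glosses over it. However, your stated reason for the first remedy does not work. Knowing $\ff\approx\cJ\yy$ and $\nabla V\approx\nabla^2V(\bz)\yy$ does not make $\ff$ and $\nabla V$ parallel. The symmetric part of $a'(V)\,\ff\,\nabla V^\top$ therefore has the negative eigenvalue $\tfrac12 a'(V)\bigl(\ff^\top\nabla V-\|\ff\|\,\|\nabla V\|\bigr)$, which is of order $a'(V)\|\yy\|^2$. Since $a$ must climb from $1$ to a large $M$ across the annulus, this term is not dominated by $a\lambda_1/2$ for a generic profile, and your sketch gives no reason it would be.

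The remedy can be repaired by censoring with the local Lyapunov function $V(\yy)=\tfrac12\yy^\top\cJ\yy$. This is admissible near the origin, and its sublevel sets are ellipsoids, so your convexity step also survives. Then $\ff\,\nabla V^\top=\cJ\yy\,\yy^\top\cJ+O(\|\yy\|^3)$, so the rank-one term is bounded below by $-Q\,a'(V)\|\yy\|^3$. A geometric profile for $a$ across $[K_2/2,K_2]$ makes this $O(a\sqrt{K_2}\log M)$, which is negligible against $a\lambda_1/2$ for small $\kappa$.

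Your fallback, as written, only controls $t\le e^{\beta n}$, because Lemma \ref{lemma:to_small_box_in_linear_time} gives confinement only up to that horizon. That suffices for the later applications, but it is weaker than the ``for all $t$'' in the statement.
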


\begin{proof} 
Write 
$M=I-\tfrac{\cJ}{2n}$ so that 
\[\bY_{t+1}= M\bY_t - \frac{1}{2n}\mathbf{R}(\bY_t) + \frac{1}{n} \bS_t + O\left(\frac{1}{n^2}\right).\] 
 
Recall that $\Var(\bY_t)$ can be expressed as
\[\Var(\bY_t) = \frac12 \E\left[\n \bZ_t-\bZ'_t \n^2\right],\]
where $\bZ_t$ and $\bZ'_t$ are two independent copies of $\bY_t$. 
Let $\zz_t := \E[\n \bZ_t- \bZ'_t \n^2]$.
We have
\[\E[\n\bZ_{t+1}-\bZ'_{t+1}\n^2 \mid \bZ_t,\bZ'_t] = \left\n M(\bZ_t-\bZ'_t)-\frac{1}{2n}\left(\mathbf{R}(\bZ_t)-\mathbf{R}(\bZ'_t)\right)\right\n^2 + O\left(\frac{1}{n^2}\right),\]
where the $O(1/n^2)$ term is the sum of the $O(1/n^2)$ term in the definition of the MFGOU and the contribution to the expectation of $(1/n)(\bS_t-\bS'_t)$, 
taking into account that $\E[\bS_t \mid \bZ_t]=\E[\bS'_t \mid \bZ'_t]=\bz$.
The expectation of the right-hand side is
\[\E[\n M(\bZ_t-\bZ'_t) \n^2] -\frac{1}{n} \E[(\bZ_t-\bZ'_t)^\top M (\mathbf{R}(\bZ_t)-\mathbf{R}(\bZ'_t))] +\frac{Q}{n^2}.\]
 By the multivariate Mean-value Theorem applied to each component of $\mathbf{R}$ and using that $\mathbf{R}$ is $C^1$ (since $\ff$ is $C^2$) and $\mathbf{R}(\yy)=O(\n\yy\n^2)$,
\[\n \mathbf{R}(\bZ_t)-\mathbf{R}(\bZ'_t)\n^2 \leq c^2\n \bZ_t - \bZ'_t\n^2\]
for 
\[c^2=\max_{\yy \in B(\bz,\kappa),i \in \{1,\ldots,m\}}\n \nabla R_i(\yy) \n^2.\]
By Schwarz inequality and since $\n M\vv \n \leq \n \vv \n$ for any $\vv$, the second term of the expectation is thus bounded by $\tfrac{c}{n}\E[\n \bZ_t-\bZ'_t \n^2]$.
The first term is bounded by 
\[\left(1-\frac{\lambda_1}{n}\right)\E[\n \bZ_t-\bZ'_t \n^2] + O\left(\frac{1}{n^2}\right).\]
By taking $\kappa$ sufficiently small, $c<\lambda_1/2$, and adding all up, we get
\[\zz_{t+1} \leq \left(1-\frac{\lambda_1}{2n}\right)\zz_t+\frac{Q}{n^2}.\]
Since $\zz_0=0$, it follows that $\zz_t \leq Q/n$ for all $t$.

\end{proof}

\subsection{Tracking the mean} \label{subsec:tracking_the_mean}

Lemma \ref{lemma:variance_bound} is our key result in two ways; (a) it establishes that at any given time, $\bY_t$ is whp within distance $O(1/\sqrt{n})$ of its mean and (b) allows for tracking the evolution of the mean.
We will use these facts to show that virtually all the mass of the stationary distribution is located within distance $O(1/\sqrt{n})$ of the origin. Entering this region is therefore a necessary condition for mixing; 
conversely, once it has been reached, mixing is near, as will be established by the coupling argument below.

Let us now analyze how $\bmu_t:=\E[\bY_t]$ evolves with time.
We have
\[\bmu_{t+1}=M\bmu_t - \frac{1}{2n}\E[\mathbf{R}(\bY_t)] + O\left(\frac{1}{n^2}\right).\]
For the second term, observe that 
\[\n\E[\mathbf{R}(\bY_t)]\n \leq \E[\n \mathbf{R}(\bY_t)\n] \leq Q \E[\n \bY_t\n^2] \leq Q \left(\n\bmu_t\n^2+\frac{1}{n}\right),\]
where the last inequality is by Lemma \ref{lemma:variance_bound}.
Inserting into the recursion, it follows that for sufficiently large constant $C<\iy$,
\[\n \bmu_{t+1}-M\bmu_t \n \leq \frac{C\n M \n \n \bmu_t \n^2}{n}+\frac{C}{n^2}. \]
Here the factor $\n M \n$ in the first term on the right-hand side is inserted for convenience.

Recall the definition of the usual matrix norm: 
\[\n A \n := \max_\uu \n A\uu \n,\] 
where the maximum is taken over unit vectors $\uu$. 
If $A$ is symmetric and positive definite, the maximum is reached for the eigenvector for the largest eigenvalue, from which it follows that
\[\left\n \sum_{s=0}^k A^s \right\n = \sum_{s=0}^k \n A^s \n = \sum_{s=0}^k \n A\n^s\] 
for any $k \geq 0$.

The following lemma tells us that, provided that $\kappa$ is sufficiently small, $\{\bmu_t\}$ behaves essentially as if the recursion had simply read $\bmu_{t+1}=M\bmu_t$; the small wiggle room does not cause it to stray far from the main contractive path.
For convenience we compare $\xx_{t+1}$ to $A^2\xx_t$ (rather than $A\xx_t$).

\begin{lemma} \label{lemma:drop_the_squre_in_the_recursion}
Assume that $E$ is an $m \times m$ symmetric positive definite matrix and let $A=I-E/n$. 
Let $S_t=\sum_{s=0}^{t-1}A^s$ and write $e_1$ for the smallest eigenvalue of $E$.
Assume that $\{\xx_t\}_{t=0}^\iy$ is a sequence in $\RR^m$ that recursively satisfies
\[\n\xx_{t+1}-A^2\xx_t\n \leq C\left(\frac{\n A\n^2 \n\xx_t\n^2}{n} + \frac{1}{n^{2}}\right).\]
Then there exist $N,C_1<\iy$ such that  if $\n \xx_0 \n \leq \tfrac{e_1}{4C}$ and $n \geq N$, then
\[\n \xx_t-A^{2t}\xx_0\n \leq C_1\n S_t\n\left(\frac{\n A \n^{2t} \n\xx_0\n^2}{n} + \frac{1}{n^{2}}\right)\]
for all $t \geq 1$.

\smallskip

In particular, for $C_2$ sufficiently large,
\[\n\xx_t-A^{2t}\xx_0\n \leq C_2\left(\n A\n^{2t} \n\xx_0\n^2 + \frac{1}{n}\right).\]
\end{lemma}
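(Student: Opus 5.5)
We write $\xx_{t+1}=A^2\xx_t+\bdelta_t$ with $\n\bdelta_t\n\le C(\n A\n^2\n\xx_t\n^2/n+1/n^2)$. Unrolling gives the discrete variation-of-constants formula
\[\xx_t-A^{2t}\xx_0=\sum_{s=0}^{t-1}A^{2(t-1-s)}\bdelta_s .\]
The plan is an induction on $t$ with a bootstrap hypothesis. The hypothesis is that for all $s<t$,
\[\n\xx_s\n\le 2\n A\n^{2s}\n\xx_0\n+\frac{2C_1\n S_s\n}{n^2}.\]
Here $\n A\n=1-e_1/n<1$ for $n\ge N$, since $A$ is symmetric positive definite once $n$ exceeds the largest eigenvalue of $E$. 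Under this hypothesis, $\n\bdelta_s\n\le 8C\n A\n^{2s+2}\n\xx_0\n^2/n+Q/n^2$, where the $1/n^2$-order cross terms are absorbed using $\n S_s\n\le n/e_1$.

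Inserting this into the formula and using $\n A^{2(t-1-s)}\n=\n A\n^{2(t-1-s)}$ gives two sums. The first is
\[\sum_{s=0}^{t-1}\n A\n^{2(t-1-s)}\cdot\frac{8C\n A\n^{2s+2}\n\xx_0\n^2}{n}=\frac{8Ct\,\n A\n^{2t}\n\xx_0\n^2}{n}.\]
The second is at most $\frac{Q}{n^2}\sum_{s}\n A\n^{2s}\le\frac{Q}{n^2}\n S_t\n$.

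The main obstacle is the factor $t$ in the first sum: the stated bound has $\n S_t\n=\sum_{s<t}\n A\n^s$ in its place, which is smaller than $t$ for $t\gg n$. I would handle this by not discarding the contraction. The quadratic error at time $s$ is $\n\xx_s\n^2\approx\n A\n^{4s}\n\xx_0\n^2$, and propagating it forward costs $\n A\n^{2(t-1-s)}$. Their product is $\n A\n^{2t}\n A\n^{2s-2}$, and summing over $s$ gives $\n A\n^{2t}\sum_s\n A\n^{2s}\le\n A\n^{2t}\n S_t\n$. This is exactly why the statement compares with $A^2$ rather than $A$: it leaves room for one factor $\n A\n^{2s}$ in the error. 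The bootstrap hypothesis must therefore be used squared, as $\n\xx_s\n^2\le 8\n A\n^{4s}\n\xx_0\n^2+(\text{lower order})$.

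To close the induction I then need $C_1\n S_t\n(\n A\n^{2t}\n\xx_0\n^2/n+1/n^2)\le\n A\n^{2t}\n\xx_0\n+C_1\n S_t\n/n^2$. This reduces to $C_1\n S_t\n\n\xx_0\n/n\le1$, and it holds since $\n S_t\n\le n/e_1$ and $\n\xx_0\n\le e_1/(4C)$, after choosing $C_1$ proportional to $C$ (say $C_1=8C$, adjusting constants). This is where the hypothesis on $\n\xx_0\n$ enters. For the final claim I use $\n S_t\n\le n/e_1$ once more, which bounds $C_1\n S_t\n(\n A\n^{2t}\n\xx_0\n^2/n+1/n^2)$ by $C_2(\n A\n^{2t}\n\xx_0\n^2+1/n)$.
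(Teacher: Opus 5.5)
Your route (a Duhamel formula plus a bootstrap on $\|\mathbf{x}_s\|$) differs from the paper's. The paper first treats $\|\mathbf{x}_0\|\le C'/n$ separately. It then inducts on $t$ simultaneously over all admissible sequences, applying the time-$t$ bound to the shifted sequence $(\mathbf{x}_{t+1})_{t\ge0}$, which is admissible because $\|\mathbf{x}_1\|\le e_1/(4C)$. The one-step contraction $\|\mathbf{x}_1\|\le\|A\|^{3/2}\|\mathbf{x}_0\|$ leaves a spare factor $\|A\|$ that turns $\|S_t\|$ into $\|AS_t\|=\|S_{t+1}\|-1$. This absorbs the new first-step error with the \emph{same} $C_1$. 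That is where the threshold $e_1/(4C)$ and the comparison with $A^2$ enter, and no a priori control of the trajectory is needed. In your computation $A^2$ plays no special role: comparing with $A$ gives the same structure $\|A\|^{t-1}\sum_{s<t}\|A\|^s$, and the extra decay comes from the square. Your route can work, but its closing step fails as written.

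Squaring the bootstrap via $(a+b)^2\le 2a^2+2b^2$ gives $\|\mathbf{x}_s\|^2\le 8\|A\|^{4s}\|\mathbf{x}_0\|^2+\cdots$, so the first half of your argument forces $C_1\ge 8C$. The closure condition $C_1\|S_t\|\|\mathbf{x}_0\|/n\le1$ must hold for every $t$ and every $\|\mathbf{x}_0\|\le e_1/(4C)$. Since $\|S_t\|\uparrow n/e_1$, its left side tends to $C_1/(4C)$, which equals $2$ for $C_1=8C$. You would need $8C\le C_1\le 4C$.

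``Adjusting constants'' does not help, because the threshold $e_1/(4C)$ is fixed by the statement. With a general bootstrap factor $K$ in place of $2$, your estimates turn the closure into $1+K^2/2\le K$, which has no real solution.

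Two estimates must be sharpened. First, square with $(a+b)^2\le(1+\eta)a^2+(1+\eta^{-1})b^2$; the $b$-part is $O(1/n)$ and only adds $O(\|S_t\|/n^3)$. Second, verify the bootstrap from the intermediate bound $4(1+\eta)C\|A\|^{2t}\|\mathbf{x}_0\|^2n^{-1}\sum_{s<t}\|A\|^{2s}$. Use $\sum_{s<t}\|A\|^{2s}\le(1-\|A\|^2)^{-1}=n/(e_1(2-e_1/n))$ rather than $\|S_t\|\le n/e_1$.

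The closure coefficient is then at most $(1+\eta)/(2-e_1/n)<1$ for $\eta=1/2$ and $n>2e_1$, so the bootstrap with factor $2$ closes. Then $\sum_{s<t}\|A\|^{2s}\le\|S_t\|$ gives the lemma with $C_1=6C$ for $n$ large. Neither sharpening suffices alone: with only one of them, your closure inequality still admits no $K$.
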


\begin{proof}
Fix $C'<\iy$. 
If $\|\xx_0\|<C'/n$, then the assumed recursive inequality and a simple induction give
\[\|\xx_t\|\le \frac{C'}{n}\]
for all $t$, provided $C'$ is sufficiently large in comparison with $C$ and $n$ is sufficiently large. 
Applying the triangle inequality and then the recursive inequality to each term gives, for sufficiently large $n$,
\begin{align*}
    \n \xx_t - A^{2t}\xx_0\n & \leq \sum_{s=0}^{t-1}\n A^{2s}\n \n \xx_{t-s}-A^2\xx_{t-s-1} \n \\
    & \leq \sum_{s=0}^{t-1} \n A\n^{2s} \frac{2C}{n^2} \leq C_1 \frac{\n S_t\n}{n^2},
\end{align*}
assuming wlog that $C_1 \geq 2C$.
This establishes the assertion of the lemma for $\n \xx_0 \n \leq C'/n$.

    The proof for $\n \xx_0 \n > C'/n$ goes by induction, starting with the observation that the case $t=1$ is immediate.
    To this end, observe first that, by the recursive inequality and the triangle inequality,
    \begin{equation} \label{eq:bound_on_norm_of_x1}
    \n \xx_1 \n \leq \n A \n^2 \n \xx_0 \n \left(1+\frac{C \n \xx_0\n}{n}\right) + \frac{C}{n^2}.
    \end{equation}
    Since the right-hand side is increasing in $\n \xx_0\n$, substituting with $\n \xx_0\n=\tfrac{e_1}{4C}$ shows that $\n \xx_1\n \leq \tfrac{e_1}{4C}$.
    Thus we may apply the induction hypothesis to the shifted sequence $\{\xx_{t+1}\}_{t=0}^\iy$,
    and doing so, it follows that
    \begin{align*}
        \n \xx_{t+1}-A^{2(t+1)}\xx_0 \n &\leq \n \xx_{t+1}-A^{2t}\xx_1 \n + \n A\n^{2t} \n\xx_1-A^2\xx_0\n \\
        &\leq C_1\n S_t\n \left(\frac{\n A \n^{2t} \n\xx_1\n^2}{n}+ \frac{1}{n^2}\right) \\
        &+ C\n A\n^{2t} \left(\frac{\n A\n^{2}\n\xx_0\n^2}{n} + \frac{1}{n^2}\right).
    \end{align*} 
Since $C \leq C_1$, the sum of the second and fourth terms is then 
\[\frac{C\n A\n^{2t}+ C_1\sum_{s=0}^{t-1}\n A\n^s}{n^2} <  \frac{C_1\n S_{t+1} \n}{n^2}.\] 
For the other two terms, observe further that
\[\n A \n^{1/2}\left(1+\frac{C\n\xx_0\n}{n}\right) \leq \left(1-\frac{e_1}{2n}\right)\left(1+\frac{C\n\xx_0\n}{n}\right) < 1-\frac{e_1}{4n}\]
for $\n\xx_0\n \leq \tfrac{e_1}{4C}$, which on insertion in (\ref{eq:bound_on_norm_of_x1}) gives
\[\n \xx_1 \n \leq \left(1-\frac{e_1}{4n}\right) \n A \n^{3/2}\n\xx_0\n + \frac{C}{n^2}.\]
Since $\n\xx_0\n \geq C'/n$, the right-hand side is bounded by $\n A \n^{3/2}\n \xx_0\n$, provided that $C'$ is sufficiently large (if not, increase it sufficiently and increase $C_1$ accordingly if necessary). 
The first term is therefore bounded by
\[\frac{C_1\n A \n^{2(t+1)} \n AS_t\n \n \xx_0 \n^2}{n} = \frac{C_1 \n A \n^{2(t+1)} \n S_{t+1}-I \n \n \xx_0 \n^2}{n}.\]
Since $C \leq C_1$, this in turn bounds the sum of the first and third terms by 
\[C_1\n A \n^{2(t+1)} \n S_{t+1}\n \n \xx_0 \n^2/n.\]

Combining these bounds gives
\[\n \xx_{t+1}-A^{2(t+1)}\xx_0 \n \leq  \frac{C_1\n A \n^{2(t+1)} \n\xx_0\n^2 \n S_{t+1}\n}{n} + \frac{C_1\n S_{t+1} \n}{n^2}\]
as desired.

Finally, since
\[\n S_t \n \leq \sum_{s=0}^\iy \n A \n^{s} = \frac{n}{e_1},\]
the last assertion follows for $C_2$ sufficiently large.
\end{proof}

\subsection{Proof of the mixing time bounds} \label{subsec:proof_of_mixing_time}

{\bf Proof of the upper bound in Theorem \ref{thm:main_theorem}.}

\smallskip

We stress that the coupling construction below uses only the transition, drift, Lyapunov, and local regularity assumptions of an MFGOU process and does not require the prior existence of a stationary distribution.

As observed in Section \ref{subsec:to_small_box_in_linear_time}, by adding a time of $C_\kappa nV(\bY_0)$ to the upper bound, we may assume that $\cY$ is restricted to $B(\bz,\kappa)$.
Pick $\kappa$ sufficiently small that all above results that hinge on $\kappa$ being small hold, pick $C_\kappa$ accordingly and make said assumption.

Let $\cY$ and $\cY'$ be two copies of a discrete or semicontinuous MFGOU on $B(\bz,\kappa)$.
The goal is to construct a coupling such that the coupling time $\tau=\min\{t:\bY_t=\bY'_t\}$ satisfies 
\[\Pro\left(\tau>\frac{1}{\lambda_1}n\log n + C_1 n\right) < \delta\]
for a sufficiently large constant $C_1=C_1(\delta)$.
The coupling is conceptually simple: let $\cY$ and $\cY'$ run independently until they both reach within $O(1/\sqrt{n})$ of the origin, and then use a sequence of attempts to bring them simultaneously into $B(\bz,\beta/\sqrt{n})$,
where $\beta>0$ will be chosen sufficiently small that Corollary \ref{cor:path_coupling_with_expansion} applies to a given admissible path coupling.

Since \(M=I-\tfrac{1}{2n}\cJ\), Lemma \ref{lemma:drop_the_squre_in_the_recursion} applies to \(\bmu_t\) with
\(E=\tfrac{1}{4}\cJ\). 
Indeed, then
\(A=I-\tfrac{1}{4n}\cJ\) and
\[A^2=M+O\left(\frac{1}{n^2}\right),\]
and the additional \(O(1/n^2)\) term is absorbed into the error term in the
recursion.
Hence, for a given constant $C_2$, setting $t:=\tfrac{1}{\lambda_1}n\log n + C_1 n$ and taking $C_1$ sufficiently large, Lemma \ref{lemma:drop_the_squre_in_the_recursion} shows that $\n \bmu_t \n \leq C_2/\sqrt{n}$. 
Since $\n \bY_t \n \geq 4C_2/\sqrt{n}$ implies $\n \bY_t - \bmu_t \n \geq 3C_2/\sqrt{n}$,
Markov's inequality and Lemma \ref{lemma:variance_bound} give 
\[\Pro(\bY_t \in B(\bz,4C_2/\sqrt{n})) > 1-\frac{\delta}{4}\]
provided that $C_2$ is sufficiently large.
Of course, the analogous statements for $\cY'$ also hold.
Make such a choice of $C_2$ and pick $C_1$ large enough accordingly.
Let $\tau_0$ be the first time that we find both $\cY$ and $\cY'$ in $B(\bz,4C_2/\sqrt{n})$. 
It follows from a union bound that
\[\Pro\left(\tau_0 > \frac{1}{\lambda_1}n\log n + C_1 n\right) < \frac{\delta}{2}.\]

{\bf The attempt scheme.} The rest of the analysis proceeds in {\em attempts}. An attempt consists of up to three stages according to the following algorithm.
\begin{itemize}
\item[(a)] Wait until the next time $\cY$ and $\cY'$ are both in $B(\bz,C_2/\sqrt{n})$. 
\item[(b)] After (a), wait for a maximum of $n$ steps until either $\cY$ and $\cY'$ are both in $B(\bz,\beta/\sqrt{n})$ or one of them exits $B(\bz,2C_2/\sqrt{n})$.
If the latter happens or time runs out before either occurs, declare the attempt a failure and begin a new attempt. If the former occurs, declare this step a success and go to (c).
\item[(c)] Apply a given admissible path coupling for a maximum number of $n/4\hat{K}$ ($\hat{K}$ chosen below) steps until either $\cY$ and $\cY'$ coalesce or one of them exits $B(\bz,2C_2/\sqrt{n})$.
If coalescence occurs, declare the attempt successful, otherwise a failure. In case of failure, begin a new attempt.
\end{itemize}
Notice that every failure in (b) or (c) results in a restart with both processes inside $B(\bz,4C_2/\sqrt{n})$ and frequently with both already in $B(\bz,C_2/\sqrt{n})$, making Step (a) vacuous for the next attempt. 
In some cases, both processes are even in $B(\bz,\beta/\sqrt{n})$, making Step (b) vacuous as well.

\smallskip

{\bf A smoothing argument.}
To bound the expected duration of Step~(a), we need a smooth approximation to the distance from the origin whose conditional drift is bounded above by 
\[ -\frac{c\|\bY_t\|}{n}+\frac{Q}{n^{3/2}}. \]
The Euclidean norm is not smooth at the origin, so we use the following approximation.
\[ \tilde Z_t=\phi(\bY_t), \,\, \phi(\yy):=\sqrt{\|\yy\|^2+\frac{Q_0}{n}}, \] 
for a constant $Q_0>0$.
We have 
\[ \nabla\phi(\yy) = \frac{\yy}{\sqrt{\|\yy\|^2+Q_0/n}} \] 
and 
\[ \nabla^2\phi(\yy) = \frac{I}{\sqrt{\|\yy\|^2+Q_0/n}} - \frac{\yy\yy^\top}{(\|\yy\|^2+Q_0/n)^{3/2}}. \] 
In particular, 
\[ \|\nabla\phi(\yy)\|\le 1, \,\, \|\nabla^2\phi(\yy)\|\le \sqrt{\frac{n}{Q_0}}.\] 
A second-order Taylor expansion of \(\phi\), 
together with the MFGOU transition rule and the conditional centering of $\bS_t$, therefore gives, uniformly on \(B(\bz,\kappa)\), 
\[ E[\tilde Z_{t+1}-\tilde Z_t\mid \bY_t=\yy] \le -\frac{\yy^\top \ff(\yy)} {2n\sqrt{\|\yy\|^2+Q_0/n}} +\frac{Q}{n^{3/2}}. \] 
For \(\kappa\) sufficiently small, \(\yy^\top \ff(\yy)\ge c\|\yy\|^2\) for a constant $c>0$. 
Since \[ \frac{\|\yy\|^2}{\sqrt{\|\yy\|^2+Q_0/n}} \ge \|\yy\|-\sqrt{\frac{Q_0}{n}}, \] 
it follows that 
\[ E[\tilde Z_{t+1}-\tilde Z_t\mid \bY_t=\yy] \le -\frac{c\|\yy\|}{n}+\frac{Q}{n^{3/2}}. \]

{\bf Duration of Step (a).} By increasing $C_2$ further if necessary, the smoothed process $\tilde{\cZ}$ satisfies
\[ E[\tilde Z_{t+1}-\tilde Z_t\mid \bY_t=\yy] \le -\frac{2Q}{n^{3/2}} \]
whenever $\n\yy\n \geq \tfrac{C_2}{2\sqrt{n}}$.
It follows that if either of $\n\yy\n$ or $\n \yy'\n$ exceeds $C_2/\sqrt{n}$,
\[ E[\tilde Z_{t+1}-\tilde Z_t + \tilde Z'_{t+1}-\tilde Z'_t \mid \bY_t=\yy,\bY'_t=\yy'] \le -\frac{Q}{n^{3/2}}. \]
At the beginning of Step (a), both processes are in $B(\bz,4C_2/\sqrt n)$, so the initial value of $\tilde Z_t+\tilde Z'_t$ is $O(1/\sqrt{n})$.
From optional stopping, it then follows that the expected duration of Step (a) is bounded by $Qn$.

\smallskip

{\bf Success probability and duration of Step (b).} Lemma \ref{lemma:hitting_inner_box} applied with $c=1$ implies that when started from $B(\bz,C_2/\sqrt{n}) \times B(\bz,C_2/\sqrt{n})$, Step (b) is a success with probability $p=p(\beta,C_2)>0$ for any given $\beta>0$.
By definition, its duration is bounded by $n$.

\smallskip

{\bf Choice of $\beta$ and $\hat{K}$, and analysis of Step (c).} We will appeal to Corollary \ref{cor:path_coupling_with_expansion}. Before doing so, however, it needs to be established that for $\beta$ sufficiently small, Step (c) is unlikely to fail through an early exit. 
For that, consider again $\tilde{Z}_t$. The outward drift is bounded by $Q/n^{3/2}$, so the process $U_t:=\tilde{Z}_t-Qt/n^{3/2}$ is a supermartingale.
It follows from the Maximal Azuma-Hoeffding inequality (see \cite{Williams}) that if $U_0=a/\sqrt{n}$ and $\tau$ is the first time $U_t$ exceeds $2a/\sqrt{n}$, then $\Pro(\tau \leq \ell n) \leq 1/9$ for $\ell>0$ sufficiently small.

Now with $Z_0 \leq \beta/\sqrt{n}$ and $\beta \leq \sqrt{Q_0}$, we have $\tilde{Z}_0=U_0 \leq \sqrt{2Q_0/n}$.
Choosing again $C_2$ larger if necessary, we have $2C_2-Q > 2\sqrt{2Q_0}$ and thus whenever $\ell$ is sufficiently small
\[\Pro(\tau_{\rm out} \leq \ell n) \leq \frac{1}{9}\]
for $\tau_{\rm out}$ being the first time that $\cU$ exceeds $(2C_2-Q)/\sqrt{n}$.
Furthermore, provided that $\ell \leq 1$, then if $U_t$ does not exceed $(2C_2-Q)/\sqrt{n}$, then $\tilde{\cZ}_t$ does not exceed $2C_2/\sqrt{n}$ and since $\tilde{Z_t} \geq \n\bY_t\n$, neither does $\{\n \bY_t\n\}$.

\smallskip

Now let $D_t=\n\bY_t-\bY'_t\n$.
By Proposition \ref{prop:long_range_coupling_bounds}, there exists an admissible path coupling. 
Fix one such coupling to use in what follows.

Consider first the discrete case.
The admissible path coupling satisfies the conditions of
Corollary \ref{cor:path_coupling_with_expansion} with $\alpha=\chi$ and some $K<\iy$.
Applying the corollary with $\hat{K}=K \vee (1/4\ell) \vee \sqrt{Q}$,
it follows that if started with 
\[D_0^2 \leq \frac{\alpha}{16000\hat{K}n},\]
coalescence occurs under the admissible path coupling within time $\tfrac{n}{4\hat{K}} \leq \ell n$ with probability at least $7/9$.
Choose
\[\beta=\sqrt{\frac{\alpha}{64000\hat K}},\]
so that if both chains are in \(B(\bz,\beta/\sqrt n)\), 
then
$D_0^2\le \tfrac{\alpha}{16000\hat K n}$.
Since \(\Pro(\tau_{\rm out} \leq \ell n) \leq 1/9\), the probability that either
\(\cY\) or \(\cY'\) exits \(B(\bz,2C_2/\sqrt n)\) within time \(\ell n\) is at most
\(2/9\). Combining this with the \(7/9\) coalescence probability from
Corollary \ref{cor:path_coupling_with_expansion}, Step (c) succeeds with probability at least \(7/9-2/9=5/9\).

In the semicontinuous case, the same argument brings the distance below $\epsilon/n$ within $\ell n$ steps with probability at least $7/9$. 
Conditional on this event, Proposition \ref{prop:short_distance_coupling} gives a probability at least $\alpha_0>0$ of coalescence in the next step. 
Thus, apart from the possibility of an early exit, coalescence occurs during Step~(c) with probability at least $\gamma:=7\alpha_0/9$. By choosing $\ell>0$ sufficiently small and then choosing $\hat K$ and $\beta$ accordingly, 
the probability that either process exits during Step~(c) can be made smaller than $\gamma/2$. 
Hence Step~(c), extended by one step if necessary, succeeds with probability $q=\gamma/2$ bounded away from zero also in the semicontinuous case.


\smallskip

{\bf Conclusion.} Let $\cF_i$ denote the history of $(\cY,\cY')$ up to the start of the $i$'th attempt.  Since the preceding estimates hold uniformly over all states from which an attempt may begin, we have, conditional on $\cF_i$, that (a) attempt $i$ succeeds with probability at least $pq$ and (b) the expected time taken by an attempt is bounded by $Qn$.

More formally, let $T_i$ be the duration of attempt $i$, let $G$ be the number of attempts that are required until $\cY$ and
$\cY'$ finally coalesce and $F_i=\{G>i\}$. 
Iterated conditioning gives
\[\Pro(F_i) \leq(1-pq)^i,\] 
so $G$ is stochastically dominated by a geometric random variable with parameter $pq$.
Moreover, 
\[\E[\tau_c-\tau_0] \leq \sum_{i=1}^\iy \E[T_i\one_{F_{i-1}}] = \sum_{i=1}^\iy \E[\one_{F_{i-1}}\E[T_i|\cF_i]] 
\leq Qn\sum_{i=1}^\iy (1-pq)^{i-1} \leq C_3n.\]
Hence by Markov's inequality
\[\Pro(\tau_c-\tau_0>C_4 n) < \frac{\delta}{2}\]
for $C_4=2C_3/\delta$.
Adding the bound on $\tau_0$, it follows that
\[\Pro\left(\tau_c > \frac{1}{\lambda_1}n\log n + C_5 n\right) < \delta\]
for $C_5=C_1+C_4$.
The Coupling inequality now gives 
\[\tauh(\cY;\delta) \leq \frac{1}{\lambda_1}n\log n + C_5 n.\]
Recalling that this bound was obtained under the assumption of a starting point in $B(\bz,\kappa)$, we add the $O(nV(\bY_0))$ entrance-time term to finish the proof.
\hfill $\Box$

\smallskip

{\bf Proof of the lower bound in Theorem \ref{thm:main_theorem}.}

\smallskip


Let $\bY_0=\bmu_0 = \kappa\uu$ for a sufficiently small constant $\kappa$ and $\uu$ the unit eigenvector of $M$ for the eigenvalue $1-\tfrac{\lambda_1}{2n}$.
For $t \leq (1/\lambda_1)n\log n - C_1n$ and $C_1$ large enough, it follows from the Lemma \ref{lemma:drop_the_squre_in_the_recursion} with $A^2=M+O(1/n^2)$,
that $\n \bmu_t \n \geq 3C_2/\sqrt{n}$ for any large given constant $C_2$.
On the event $\{\n \bY_t \n \leq 2C_2/\sqrt{n}\}$, we have $\n \bY_t-\bmu_t\n \geq C_2/\sqrt{n}$.
Hence by Markov's inequality and Lemma \ref{lemma:variance_bound},
\[\Pro\left(\n\bY_t\n \leq \frac{2C_2}{\sqrt{n}}\right) < \rho\]
for arbitrarily small $\rho>0$ upon taking $C_2$ sufficiently large and then $C_1$ large enough accordingly.

In contrast, for all sufficiently large $s$, $\n\bmu_s\n \leq C_2/\sqrt{n}$, so an identical application of Markov's inequality gives 
\[\Pro\left(\n\bY_s\n \geq \frac{2C_2}{\sqrt{n}}\right) < \rho.\]
By the upper bound just proved, the law of $\bY_s$ converges in total variation to $\pi$ as $s\to\infty$.
Hence, with $\bY_\iy$ having law $\pi$,
\[\Pro\left(\n\bY_\iy\n \geq \frac{2C_2}{\sqrt{n}}\right) < \rho.\]
Consequently,
\[\n\Pro(\bY_t \in \cdot)-\pi\n_{TV} \geq 1-2\rho.\]
This proves the lower bound.
\hfill $\Box$

\section{Applications} \label{sec:applications}

In this section, we develop the applications outlined in the introduction. 
They have somewhat different logical status. 
The Gibbs-sampling and mean-field block-measure results are direct applications of Theorems \ref{thm:main_theorem} and \ref{thm:exponential_mixing_time} once their stated hypotheses have been verified. 
The conclusions for full LDA are conditional on the stationary-point structure of the associated free energy, whose complete analytic description remains open. 
The stochastic-gradient-descent applications require the stated boundedness and regularity assumptions, while the Hamiltonian Monte Carlo application uses a Gaussian-increment extension of the MFGOU analysis. 
The comparison with Metropolis--Hastings is heuristic and is not claimed as a consequence of Theorem \ref{thm:main_theorem}.

\subsection{Gibbs/Metropolis--Hastings sampling from a unimodal density} \label{subsec:Metropolis--Hastings}

Assume that $h:\RR^m \ra \RR$ is a three times continuously differentiable function with a unique stationary point, 
which is its global minimum. Assume wlog that the minimum is at the origin and $h(\bz)=0$, and that $\n \nabla h(\yy)\n$ is bounded away from $0$ on $B(\bz,\kappa)^c$ for each $\kappa>0$. 
Note that with these assumptions, there is a $q>0$ such that 
\[\n \nabla h(\yy)\n^2 \geq q(h(\yy) \wedge 1).\] 
We consider sampling from the probability density $H(\yy) \propto e^{-nh(\yy)}$. 

\smallskip

{\bf Gibbs sampling:}
As we will formalize it, Gibbs sampling works as follows. Specify once and for all a symmetric probability measure $\zeta$ whose density has compact support, with the additional property that there exist $\sigma,q>0$ such that $\zeta(\bs)>q$ for all $\bs$ in the
ball $B(\bz,\sigma)$.
The density is with respect to Lebesgue measure or counting measure, depending on the application at hand. In the discrete case, we assume that $\sigma$ can be taken larger than $1$.
In the continuous case, $\zeta(\cdot+\xx)$ is assumed Lipschitz in total variation as a function of $\xx$.

Fix $\gamma \in(0,\Gamma^{-1})$, where $\Gamma=\sup_\kappa \Gamma_\kappa$ and
\[\Gamma_\kappa = \Gamma_\kappa^{(n)} :=\sup\left\{\frac{H(\yy+\bs/n)}{H(\yy)+H(\yy+\bs/n)}\right\},\] 
where the supremum is over $\yy \in B(\bz,\kappa)$ and $\bs$ in the support of $\zeta$. 
Note that $\Gamma \in (1/2,1]$, so that $\Gamma^{-1} \in [1,2)$. Moreover, $\Gamma^{(n)}_\kappa=1/2+O(\kappa+1/n)$ and values of $\gamma$ arbitrarily close to $2$ become permissible when considering the process restricted to $B(\bz,\kappa)$ and $n$ large.

Given $\bY_t=\yy$, sample $\bS \sim \zeta$ and let $\yy'=\yy+\bS/n$.
Then let $\bY_{t+1}=\yy'$, with probability $\gamma H(\yy')/(H(\yy)+H(\yy'))$, otherwise let $\bY_{t+1}=\yy$.
The acceptance probability is a scaled version of Barker's rule \cite{Barker1965}, and may equivalently be viewed as a Gibbs or heat-bath choice between the current state and the proposed state.
The Markov process so defined is readily verified to be irreducible, aperiodic, and reversible, and the stationary distribution is $H$ by the detailed balance equations.

Since $h$ is $C^3$ and $\bS$ has compact support, the acceptance probability of the proposal is Lipschitz in $\yy$ on every compact set, with a Lipschitz constant that is uniform in $n$, and
bounded away from $0$ uniformly in $n$ and over the support of $\bS$.
Consequently, a fixed positive multiple of the symmetric proposal law $\zeta$ can be extracted from the absolutely continuous part of every increment law and used as symmetric component $\mu_s$. 
The remaining holding probabilities and residual laws are Lipschitz in total variation, uniformly in $n$.
Thus the resulting increment variables $\bxi^{(n)}$ have compact support and satisfy the discrete or semicontinuous local regularity assumptions.

For the drift in the MFGOU representation, note that 
\[\frac{H(\yy')}{H(\yy)+H(\yy')}= \frac{1}{1+e^{\nabla h(\yy)^\top\bs}}+O\left(\frac{1}{n}\right).\]
Thus, up to an $O(1/n^2)$ term,
\begin{align*}
    \E[\bY_{t+1}|\bY_t=\yy] - \yy &= \frac{\gamma}{2n}\int\bs\left(\frac{1}{1+e^{\nabla h(\yy)^\top\bs}} - \frac{1}{1+e^{-\nabla h(\yy)^\top\bs}}\right) \zeta(d\bs) \\
    &= -\frac{\gamma}{2n}\int\bs\frac{e^{\nabla h(\yy)^\top\bs}-e^{-\nabla h(\yy)^\top\bs}}{(1+e^{\nabla h(\yy)^\top\bs})(1+e^{-\nabla h(\yy)^\top\bs})} \zeta(d\bs) \\
    &=-\frac{\gamma}{2n} \E\left[ \bS\frac{e^{\nabla h(\yy)^\top\bS}-e^{-\nabla h(\yy)^\top\bS}}{(1+e^{\nabla h(\yy)^\top\bS})(1+e^{-\nabla h(\yy)^\top\bS})} \right],
\end{align*}
with the expectation taken with respect to $\bS$.
Hence the drift field is 
\[\ff(\yy) = \gamma\E\left[ \bS\frac{e^{\nabla h(\yy)^\top\bS}-e^{-\nabla h(\yy)^\top\bS}}{(1+e^{\nabla h(\yy)^\top\bS})(1+e^{-\nabla h(\yy)^\top\bS})}
\right],\]
which is $C^2$, since $h$ is $C^3$.
We next verify the hypotheses needed to apply Theorem \ref{thm:main_theorem}. First, we show that its Lyapunov condition holds with $V=h$, i.e.\ that for some $c>0$,
\[\E\left[ \nabla h(\yy)^\top\bS\frac{e^{\nabla h(\yy)^\top\bS}-e^{-\nabla h(\yy)^\top\bS}}{(1+e^{\nabla h(\yy)^\top\bS})(1+e^{-\nabla h(\yy)^\top\bS})} \right] \geq c(h(\yy) \wedge 1).\]
Note that the quantity in the expectation is nonnegative and remains the same if $\nabla h(\yy)^\top\bS$ is replaced with $|\nabla h(\yy)^\top\bS|$.
Observe also the algebraic fact that $(e^{|x|}-e^{-|x|})/(1+e^{|x|})(1+e^{-|x|}) \geq \tfrac{1}{4}(|x| \wedge 1)$.
Hence
\[\nabla h(\yy)^\top \ff(\yy) \geq \frac{\gamma}{4}\E\left[ |\nabla h(\yy)^\top\bS| \wedge |\nabla h(\yy)^\top\bS|^2 \right].\]
By assumption, there exist $r,q>0$ such that the annulus $A:=B(\bz,2r)\setminus B(\bz,r)$ satisfies $\zeta(\bs)>q$ for all $\bs \in A$. (Here $r$ can be taken to be any number in $(1/2,1)$ in the discrete case.)
Let 
\[A_0=A_0(\yy)=\left\{\bs \in A: \nabla h(\yy)^\top\bs \geq \frac{1}{\sqrt{m}}\n \nabla h(\yy)\n \, \n \bs \n\right\}.\]
Then $\zeta(A_0) \geq q\mu(A_0) \geq q_0 >0$ uniformly in $\yy$ (where $\mu$ is either Lebesgue measure or counting measure).
On $A_0$, $|\nabla h(\yy)^\top\bS| \geq \tfrac{r}{2\sqrt{m}}\n\nabla h(\yy)\n$.
Since $\n \nabla h(\yy) \n^2 \geq q (h(\yy) \wedge 1)$, by considering separately the regimes $h(\yy) \geq 1$ and $h(\yy)<1$, 
this shows that for some $a>0$, 
\[|\nabla h(\yy)^\top\bs| \wedge |\nabla h(\yy)^\top\bs|^2 \geq a(h(\yy) \wedge 1)\] 
on $A_0$.
Taking $c=\gamma a q_0/4$ proves the claim.

It remains to identify the linearization of $\ff$ at the origin.
Let $\cH$ be the Hessian $\nabla^2h(\bz)$. Since $\nabla h(\bz)=\bz$, Taylor's formula gives $\nabla h(\yy) = \cH \yy + O(\n \yy \n^2)$.

Write $\Sigma$ for the covariance matrix $\E[\bS\bS^\top]$. 
Inserting this into the expression for $\ff$  gives
\[\ff(\yy) = \frac{\gamma}{2} \Sigma\cH \yy + O(\n \yy \n^2),\]
i.e.\ $\ff$ is of the form covered by Remark \ref{remark:Sigma*H}.
Theorem \ref{thm:main_theorem} gives for all $\ep \in (0,1)$,
\[\taum(\cY;\ep,B(\bz,K)) = \frac{2}{\gamma \lambda_1(\Sigma\cH)}n\log n + O(n),\]
for all $K<\iy$.

\smallskip

The more commonly used {\bf Metropolis--Hastings algorithm} \cite{MetropolisEtAl1953,Hastings1970} instead accepts the proposed move with probability 
\[ 1\wedge \frac{H(\mathbf y')}{H(\mathbf y)}. \]
After an initial $O(n)$ period, both Gibbs sampling and Metropolis–Hastings remain, with overwhelming probability, in an arbitrarily small ball $B(\boldsymbol 0,\kappa)$ up to an exponentially large time horizon.
(Although we do not provide a full proof of the corresponding statement for MH,
one can readily show that $h(\bY_t)$ decreases rapidly to a small ball and remains with overwhelming probability within that region for an exponential time.)

Considering Gibbs sampling restricted to $B(\bz,\kappa)$, one can take the 
parameter $\gamma$ arbitrarily close to $2$ by taking $\kappa$ correspondingly small, making the acceptance probability arbitrarily close to $1$. 
For MH, the probability of a move automatically becomes arbitrarily close to $1$ by taking $\kappa$ sufficiently small. 
Further, it is straightforward to verify that the conditional transition probabilities differ between the two processes only by $O(\kappa+n^{-1})$ in total variation.
These considerations strongly suggest that the mixing time for MH satisfies the same asymptotic behavior:
\[\taum(\cY;\ep,B(\bz,K))=\frac{1}{\lambda_1(\Sigma \cH)}n\log n + O(n),\]
although establishing this rigorously would require additional technical work. Given how closely MH and Gibbs agree in the relevant regime, we do not pursue this extension with full rigor here.

\begin{remark}
{\rm  If $h$ has an additional nondegenerate local minimum, a local Lyapunov condition as in the hypothesis of Theorem \ref{thm:exponential_mixing_time} can be established exactly as above, locally around each such minimum. 
Hence the mixing time is exponential in that case. 
    }
\end{remark}

\bigskip

\subsection{Mean-field block measures and Glauber dynamics} \label{subsec:mean_field_glauber}

In this section we show how the Glauber dynamics for a general mean‑field discrete model can be placed within the MFGOU framework.
The steps are as follows:
\begin{itemize}
\item[(i)] express the one‑step drift of the projected chain $\cY$ in terms of the derivatives of the potential $g$,
\item[(ii)] identify the drift vector field $\ff$ and show that it satisfies the discrete MFGOU conditions on compact interior subsets,
\item[(iii)] construct a censoring mechanism that keeps the chain in such a subset while preserving the drift,
\item[(iv)] compute the Jacobian of $\ff$ at the unique minimizer of the associated free energy,
\item[(v)] deduce a general mixing‑time theorem for these models from Theorem \ref{thm:main_theorem}.
\end{itemize}

The framework is intended to cover, among other examples, the classical Curie--Weiss Ising and Potts models, for which Glauber dynamics has been analyzed in detail \cite{LLP,Cuff}, as well as block-structured variants such as the Potts model on the complete bipartite graph \cite{HernandezKovchegovOtto2017} and the Ising model on complete multipartite graphs \cite{Kim2022}.
It also encompasses multinomial mixture models and the projected Gibbs samplers arising from the LDA models considered below.
The main purpose of the general formulation below is to capture the common free-energy and covariance structure behind these models.

The conclusion of the general analysis can be described informally as follows. 
If $\aaa$ is the unique nondegenerate minimum of the free energy, then the drift linearized around $\aaa$ is given by $2B\mathcal H$, 
where $\mathcal H$ is the Hessian of the free energy at $\aaa$ and $B$ is a block-diagonal covariance matrix determined by the single-step dynamics at equilibrium.  
In one dimension, this reduces simply to the product of the free-energy curvature at equilibrium and the variance factor coming from the single-step dynamics.
The slowest eigenmode of the product determines the mixing-time constant. 
The formal setup below is needed to derive this statement for general block models. It will then be made explicit in the Ising, Potts, and LDA examples.

\smallskip

A $q+1$-state mean-field block measure is one that can be constructed as follows.
Let $r_1,r_2,\ldots,r_d$ be positive numbers summing to $1$. 
For $r > 0$, write $\Delta^{(\ep)}_r = \{x_1,\ldots,x_q \geq \ep:\sum_{k=1}^q x_k \leq r-\ep\}$, so that $\Delta_r:=\Delta^{(0)}_r$ is the $q$-dimensional $r$-simplex; the $0$-state is implicitly included as $x_0=r-\sum_{\sigma=1}^q x_\sigma$.
Let $\cC=\Delta_{r_1} \times \Delta_{r_2} \times \cdots \times \Delta_{r_d}$ and let
$g$ be a potential, i.e.\ a three times continuously differentiable function on $\cC$.
Let $n$ be a large natural number (in terms of which mixing times will later be studied).

Next, let $N=[n]$ and let $\{N_1,\ldots,N_d\}$ be a partition that satisfies $|N_k|=nr_k$ (where we understand $nr_k$ as rounded to a neighboring integer if it is not already one, and where the rounding rule satisfies that the rounded $nr_i$'s sum to $n$).
For $\zz \in \{0,1,\ldots,q\}^N$, let for $k=1,\ldots,d$, $\sigma=0,1,\ldots,q$ 
\[p_{k,\sigma}(\zz)=\tfrac{1}{n}|\{v \in N_k:z_v=\sigma\}|,\] 
i.e.\ the proportion of $v$'s in block $N_k$ that have value $z_v=\sigma$. 
Note that
\[p_{k,0}(\zz)=r_k-\sum_{\sigma=1}^q p_{k,\sigma}(\zz).\]
Define the {\em block measure} $\nu:=\nu^{(g)}$ on $\{0,1,\ldots,q\}^N$ as the probability measure given by
\begin{equation} \label{eq:block_measure}
   \nu(\zz) \propto \exp\{-ng(p_{1,1}(\zz),\ldots,p_{d,q}(\zz))\}. 
\end{equation}
Defining $\yy = \yy(\zz) = (p_{1,1}(\zz),\ldots,p_{d,q}(\zz))^\top \in L:= \cC \cap \tfrac{1}{n}\Z$ and projecting $\nu$ onto $L$ gives rise to the {\em mean-field block measure} $\pi$:
\begin{equation} \label{eq:projected_block_measure}
\pi(\yy) \propto \left(\prod_{k=1}^d\,\prod_{\sigma=0}^q \binom{nr_k}{nu_{k,\sigma}}\right)e^{-ng(\yy)},\, \yy \in L,
\end{equation}
By a Stirling approximation, it is, up to small discrepancies, proportional to $e^{-nh(\yy)}$, where $h$ is the free energy
\[h(\yy):=g(\yy)+\sum_{k=1}^d\sum_{\sigma=0}^q y_{k,\sigma}\log y_{k,\sigma}.\]
While this approximation of $\pi$ will not be used per se, the free energy $h$ will play a fundamental role below, and the approximation gives an intuitive explanation of why that is. 
For simplicity, we will often denote the double sum on the right by $e(\yy)$ and refer to it as the entropy part of $h$. Its Hessian will be denoted $\cE(\yy)$. Letting $\cH(\yy)$ and $\cG(\yy)$ denote the Hessians of $h$ and $g$, respectively, we then have the relation $\cH=\cG+\cE$.

The standard way to sample from a probability distribution like $\nu$ is Glauber dynamics/Gibbs sampling; i.e.\ with the Markov chain $\cZ$ on $\{0,1,\ldots,q\}^N$ 
with the updating rule that given $\bZ_t=\zz$, pick a $v \in N$ uniformly at random
and then take $\bZ_{t+1}(u)=\zz(u)$ for all $u \neq v$ and
\[\Pro(\bZ_{t+1}(v) = \sigma \mid v) \propto \exp\{-ng(p_{1,1}(\zz^{(\sigma)}),\ldots,p_{d,q}(\zz^{(\sigma)}))\},\]
where $\zz^{(\sigma)}$ is the configuration that agrees with $\zz$ for $u \neq v$ and has $\zz^{(\sigma)}(v)=\sigma$.

This Markov chain is lumpable and admits the projected Glauber dynamics $\cY$, 
\[\bY_t:=\bY(\bZ_t) = (p_{1,1}(\bZ_t),\ldots,p_{d,q}(\bZ_t)) \in L,\]
as the update rule depends on $\zz$ only through the local fractions $y_{k,\sigma}=p_{k,\sigma}(\zz)$.
It is sometimes more convenient to extend the $\yy$-vectors to also contain the coefficients
$p_{k,0}(\zz)$, 
so that we then think of $\bY_t$ as
\[\bY_t=(p_{1,0}(\bZ_t),p_{1,1}(\bZ_t),\ldots,p_{1,q}(\bZ_t),p_{2,0}(\bZ_t),p_{2,1}(\bZ_t)\ldots,p_{d,q}(\bZ_t)),\]
keeping in mind that $\bY_t$ is in reality $dq$-dimensional and that the $y_{k,0}$'s are helpful sham variables.
Then a transition changes two coordinates within the same block or keeps $\yy$ unchanged.  
More precisely, writing $\delta^{(k)}_{i \ra j} = (\ee_{k,j}-\ee_{k,i})/n$ (so that consequently $\delta^{(k)}_{i \ra i}=\bz$),
\begin{equation} \label{eq:proj_block_trans_prob}
\Pro(\bY_{t+1} = \yy+\delta^{(k)}_{i \ra j}|\bY_t=\yy) =\frac{y_{k,i}\exp\{-n(g(\yy+\delta^{(k)}_{i \ra j})-g(\yy))\}}
{\sum_{\sigma=0}^q \exp\{-n(g(\yy+\delta^{(k)}_{i \ra \sigma})-g(\yy))\}}.
\end{equation}

\smallskip

We begin by deriving the drift structure of the projected chain $\cY$ on the simplex. A boundary‑repulsion condition (PID), stated later, will ensure that $\cY$ remains in the interior where these expansions hold and where censoring can be applied.

Recalling that $y_{k,0}=r_k-\sum_{\sigma=1}^q y_{k,\sigma}$, it follows that for $k=1,\ldots,d,\, \sigma=1,\ldots,q$,
\[h'_{k,\sigma}(\yy) = \log\frac{y_{k,\sigma}}{y_{k,0}}+g'_{k,\sigma}(\yy),\]
which can also be expressed as
\[e^{-g'_{k,\sigma}(\yy)} = \frac{y_{k,\sigma}}{y_{k,0}}e^{-h'_{k,\sigma}(\yy)}.\]
For any compact set $K \subset \cC^\circ$ and $\yy \in K$, the transition probability from $\yy$ can then, using Taylor's formula and the convention $g'_{k,0}(\yy)=h'_{k,0}(\yy)=0$, be written
\begin{equation} \label{eq:trans_prob_in_terms_of_h}
\Pro(\bY_{t+1}=\yy+\delta^{(k)}_{i \ra j}|\bY_t=\yy)
= \frac{y_{k,i}y_{k,j}e^{-h'_{k,j}(\yy)}}{W_k(\yy)} + O\left(\frac{1}{n}\right),
\end{equation}
where 
\[W_k(\yy):=\sum_{\sigma=0}^q y_{k,\sigma}e^{-h'_{k,\sigma}(\yy)}\] 
and the $O(1/n)$ term is uniform in $\yy \in K$.
Since $g$ is $C^3$, the transition probabilities are Lipschitz on $K$ uniformly in $n$. 
Moreover, since $K$ is compactly contained in $C^\circ$, the probabilities of standing still and of each of the elementary moves $\pm \ee_j/n$ are bounded away from zero uniformly in $n$.
Hence this describes a family of processes that sits squarely within the discrete MFGOU framework if it can be established that the drift field is of the proper form.

\smallskip

Note that $\sum_\sigma y_{k,\sigma}=r_k$.
Summing the contributions to a move $-1/n$ and a move $1/n$ respectively for $y_{k,i}$, we get 
\[\E[Y_{t+1}(k,i)|\bY_t=\yy] = y_{k,i}-\frac{1}{n}y_{k,i}\left(1-\frac{r_k e^{-h'_{k,i}(\yy)}}{W_k(\yy)}\right) + O\left(\frac{1}{n^2}\right).\]
In the MFGOU framework, this means that $f_{k,i}(\yy)$, $k \in \{1,\ldots,d\}$, $i \in \{1,\ldots,q\}$ needs to be 
\[f_{k,i}(\yy) = 2y_{k,i}\left(1-\frac{r_k e^{-h'_{k,i}(\yy)}}{W_k(\yy)}\right).\] 
We will need the following local linearization of $\ff$. Let $\aaa \in \cC^\circ$ be a stationary point of $h$ and let $\cH=\cH(\aaa)$.
Eliminating the sham quantities $y_{k,0}$ and $h'_{k,0}$,
\begin{align*}
W_k(\yy) &= r_k-\sum_{\sigma=1}^q y_{k,\sigma}\left(1-e^{-h'_{k,\sigma}(\yy)}\right) \\
&= r_k-\sum_{\sigma=1}^q y_{k,\sigma}(\cH\uu)_{k,\sigma} + O(\n\uu\n^2),
\end{align*}
where $\yy=\aaa+\uu$.
Inserting into the expression for $f_{k,i}$,
this gives
\begin{align*}
   \frac{1}{2}\,f_{k,i}(\yy) &= y_{k,i} \left(1-\frac{r_k e^{-h'_{k,i}(\yy)}}{W_k(\yy)}\right) \\
   &= y_{k,i}\left(1-\frac{r_k(1-(\cH\uu)_{k,i})}{r_k-\sum_\sigma y_{k,\sigma}(\cH\uu)_{k,\sigma}}\right) \\
   &= y_{k,i} \left( (\cH\uu)_{k,i} - \frac{1}{r_k}\sum_\sigma y_{k,\sigma}(\cH\uu)_{k,\sigma}\right) + O(\n \uu \n^2) \\
   &= (a_{k,i}+ u_{k,i}) \left( (\cH\uu)_{k,i} - \frac{1}{r_k}\sum_\sigma (a_{k,\sigma}+u_{k\sigma})(\cH\uu)_{k,\sigma}\right) + O(\n\uu\n^2)\\
   &= a_{k,i} \left( (\cH\uu)_{k,i} - \frac{1}{r_k}\sum_\sigma a_{k,\sigma}(\cH\uu)_{k,\sigma}\right)+ O(\n\uu\n^2).
\end{align*}
In vector form, this becomes
\[\ff(\yy)=2B\cH\uu+O(\n \uu \n^2)\]
where 
\[B=\mathrm{blockdiag}(B_1,B_2,\ldots,B_d),\]
\[B_k = \mathrm{diag}(\aaa_k)-\frac{1}{r_k}\aaa_k\aaa_k^\top,\]
and \[\aaa_k=(a_{k,1},\ldots,a_{k,q})^\top.\]
Notice that $B_k$ is $r_k$ times the covariance matrix of $(\one_{\{U=1\}},…,\one_{\{U=q\}})$,
where $U$ takes values in $\{0,1,…,q\}$ with probabilities $\Pro(U=\sigma)=a_{k,\sigma}/r_k$.  
Since $a_{k,\sigma}>0$ for each $\sigma=0,1,\ldots,q$, each $B_k$, and hence also $B$ is positive definite.

Next, we claim:

\begin{lemma} \label{lemma:proj_block_measure_lyapunov_on_compact_interiors}
If $h$ has a local minimizer $\aaa \in \cC^\circ$ with $\cH=\cH(\aaa)$ positive definite, 
then, taking $V(\yy)=h(\yy)-h(\aaa)$, the local Lyapunov condition of Theorem \ref{thm:exponential_mixing_time} around $\aaa$ holds, i.e.
\[\nabla V(\yy)^\top \ff(\yy) \geq cV(\yy)\]
for all $\yy$ in a sufficiently small neighborhood of $\aaa$.

\smallskip

If in addition, $\aaa$ is the unique local minimizer, and hence also the unique global minimizer, and $\n \nabla h\n$ is bounded away from $0$ on each $B(\aaa,\kappa)^c$, $\kappa>0$,
then, taking $V(\yy)=h(\yy)-h(\aaa)$, the global Lyapunov condition
\[\nabla V(\yy)^\top \ff(\yy) \geq c(V(\yy) \wedge 1)\]
of Theorem \ref{thm:main_theorem} holds on any compact $K \subseteq \cC^\circ$ that contains $\aaa$.

\end{lemma}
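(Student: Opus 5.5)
The plan is to compute $\nabla V(\yy)^\top\ff(\yy)=\nabla h(\yy)^\top\ff(\yy)$ exactly, not only to first order, and to show it is a nonnegative weighted variance that vanishes only at stationary points of $h$. Write $w_{k,\sigma}:=e^{-h'_{k,\sigma}(\yy)}$, with $w_{k,0}=1$ and $h'_{k,0}=0$. Then $f_{k,i}=2y_{k,i}(1-r_kw_{k,i}/W_k)$, and for $i=0$ the same formula gives the sham coordinate $f_{k,0}=-\sum_{i\ge1}f_{k,i}$, because $\sum_\sigma y_{k,\sigma}(1-r_kw_{k,\sigma}/W_k)=r_k-r_k=0$. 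Hence
\[\nabla h^\top\ff=\sum_{k}\sum_{\sigma=0}^q h'_{k,\sigma}f_{k,\sigma}=-2\sum_k\sum_{\sigma}y_{k,\sigma}\log w_{k,\sigma}\Bigl(1-\frac{r_kw_{k,\sigma}}{W_k}\Bigr).\]
Introduce the block probability $p_{k,\sigma}=y_{k,\sigma}/r_k$. Each block term is then $2r_k\,\Cov_{p_k}(\log w,w)/\E_{p_k}[w]$. This is nonnegative, since $\log$ is increasing, so $\log w$ and $w$ are comonotone. It vanishes iff $w_{k,\cdot}$ is constant; because $w_{k,0}=1$, that means all $h'_{k,\sigma}=0$, i.e.\ $\nabla h(\yy)=\bz$. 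This gives $\nabla V^\top\ff\ge0$ everywhere on $\cC^\circ$, with equality exactly at stationary points.

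For the local statement, compare with the linearization. From $\ff(\yy)=2B\cH\uu+O(\n\uu\n^2)$ and $\nabla h(\yy)=\cH\uu+O(\n\uu\n^2)$ we get
\[\nabla V^\top\ff=2\uu^\top\cH B\cH\uu+O(\n\uu\n^3)\ge 2\lambda_{\min}(B)\lambda_{\min}(\cH)^2\n\uu\n^2-O(\n\uu\n^3).\]
Meanwhile $V(\yy)=\tfrac12\uu^\top\cH\uu+O(\n\uu\n^3)\le\lambda_{\max}(\cH)\n\uu\n^2$ near $\aaa$. Since $B$ is positive definite (shown above), this yields $\nabla V^\top\ff\ge cV$ on a small ball around $\aaa$.

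For the global statement on a compact $K\subseteq\cC^\circ$ containing $\aaa$, split $K$ into $B(\aaa,\kappa)$ and $K\setminus B(\aaa,\kappa)$. On the ball, the local bound gives $\nabla V^\top\ff\ge cV\ge c(V\wedge1)$. On the complement, $\n\nabla h\n$ is bounded below by assumption. The covariance expression is a continuous function of $\yy$ that vanishes only where $\nabla h=\bz$, so by compactness it is bounded below by some $q>0$ there. Meanwhile $V\wedge 1\le1$, so the inequality holds with $c$ replaced by $\min(c,q)$.

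The main obstacle is the identity for $\nabla h^\top\ff$ and its sign. One must handle the sham coordinates carefully: the derivative $h'_{k,\sigma}$ is taken with $y_{k,0}$ eliminated, so one has to check that summing over $\sigma=0,\dots,q$ with $h'_{k,0}=0$ and the induced $f_{k,0}$ reproduces $\sum_{i\ge1}h'_{k,i}f_{k,i}$. Both sides agree because the $\sigma=0$ term is zero. One must also confirm that the normalization by $W_k>0$ (which holds on $\cC^\circ$) and continuity on $K$ justify the compactness step.
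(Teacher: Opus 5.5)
Your proposal is correct and follows essentially the same route as the paper. The local bound comes from the linearization $\nabla h^\top\ff=2\uu^\top\cH B\cH\uu+O(\|\uu\|^3)$. The global bound comes from writing $\nabla h^\top\ff$ as a sum of within-block covariances of a comonotone pair, followed by compactness; your $2r_k\Cov_{p_k}(\log w,w)/\E_{p_k}[w]$ is the paper's $2\,\E[\Cov(h'_{X,U},1-r_Xe^{-h'_{X,U}}/W_X\mid X)]$ rewritten. The one small difference is that you split $K$ by a ball around $\aaa$ rather than by the sublevel set $\{V<\delta\}$, which spares you from checking that small sublevel sets of $V$ in $K$ lie inside the local neighborhood.
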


\begin{proof}
We must prove two things, namely 
\begin{itemize}
\item[(i)] if $\aaa \in \cC^\circ$ is a local minimizer and $\|\yy-\aaa\|$ is sufficiently small, then $\nabla h(\yy)^\top \ff(\yy) \geq c \, (h(\yy)-h(\aaa))$, 
\item[(ii)] if $\aaa$ satisfies the conditions of the second assertion and $h(\yy) - h(\aaa)\geq \delta$ for some given $\delta>0$, then $\nabla h(\yy)^\top \ff(\yy) \geq c>0$.
\end{itemize}

For (i), the linearization of $\ff$, with $\uu=\yy-\aaa$, gives
\[\nabla h(\yy)^\top \ff(\yy) = 2\uu^\top \cH B\cH \uu + O(\n \uu\n^3),\]
and since $\cH$ and $B$ are positive definite, the desired result follows.

For (ii), fix $\delta>0$ and consider the compact set
\[K_\delta=\{\yy \in K :h(\yy) - h(\aaa) \geq \delta\}.\]
By assumption, $\nabla h(\yy) \neq \bz$ on $K_\delta$. 
Let $(X,U) \in \{1,\ldots,d\} \times \{0,1,\ldots,q\} $ be a random variable with $\Pro((X,U)=(k,\sigma))=y_{k,\sigma}$.
Then
\begin{align*}
\nabla h(\yy)^\top \ff(\yy) &= 2\sum_{k=1}^d r_k \sum_{i=1}^q \frac{y_{k,i}}{r_k} h'_{k,i}(\yy)\left(1-\frac{r_k e^{-h'_{k,i}(\yy)}}{W_k(\yy)}\right) \\
&= 2\sum_{k=1}^d r_k \sum_{i=0}^q \frac{y_{k,i}}{r_k} h'_{k,i}(\yy)\left(1-\frac{r_k e^{-h'_{k,i}(\yy)}}{W_k(\yy)}\right) \\
&= 2\,\E\left[\Cov\left(h'_{X,U}(\yy),1-\frac{r_X e^{-h'_{X,U}(\yy)}}{W_X(\yy)} \,\Big| \, X\right)\right],
\end{align*}
where the second equality is the convention $h'_{k,0}=0$ and the third follows by noting that the second of the two random variables in the conditional covariance has mean $0$.
Conditional on $X=k$, the second variable is a strictly increasing function of the first, so each conditional covariance is nonnegative.
Also, since $h'_{k,0}=0$, the conditional distribution of $h'_{X,U}(\yy)$ given $X=k$ must be nondegenerate for at least one $k$, so at least one of the conditional covariances must be strictly positive and we get
\[\nabla h(\yy)^\top \ff(\yy) >0\]
on $K_\delta$.
Here the strict positivity of the covariance follows from the symmetry in the identity
\[\Cov(X,F(X))=\frac{1}{2}\,\E[(X_1-X_2)(F(X_1)-F(X_2))],\] 
with $X_1,X_2$ two independent copies of $X$, and applying with $F$ strictly increasing.
Since $\nabla h(\yy)^\top \ff(\yy)$ is continuous and $K_\delta$ is compact, $c:=\min_{\yy \in K_\delta}\nabla h(\yy)^\top \ff(\yy)>0$.

Taking $\delta$ sufficiently small that (i) applies whenever $h(\yy)-h(\aaa)<\delta$, and combining (i) with the preceding uniform lower bound on $K_\delta$, proves, after decreasing $c$ if necessary, that
\[\nabla V(y)^\top \ff(\yy) \geq c(V(\yy) \wedge 1)\]
throughout $K$.
\end{proof}

Equipped with this lemma, the MFGOU machinery is available on compact interior sets.
If we can also prove that $\cY$ in linear time moves into such a compact set and stays there, and that censoring can be applied in a way that preserves the Lyapunov property, mixing times for the censored process apply to $\cY$ itself with the addition of an $O(n)$-term. 
To this end, observe that $\cY$ satisfies the following property.

\smallskip
\noindent {\bf Pointwise inward drift at the boundary:} There exist $\alpha,\delta>0$ such that for each $(k,\sigma)$, including $\sigma=0$, and each $\yy$ with $y_{k,\sigma}<\delta$, $\E[Y_{t+1}(k,\sigma)|\bY_t=\yy] \geq y_{k,\sigma}+\alpha/n$.
\smallskip

The PID property holds for the following reasons.
Since $g$ is differentiable on the whole of $\cC$, its gradient is bounded. 
Let 
\[\hat{g} = \max_{k,\sigma,\yy}|g'_\sigma(\yy)|\] 
and assume that $\yy$ has $y_{k,\sigma} \leq \delta$.
If $\bY_t=\yy$, then, looking at the exact transition probabilities (\ref{eq:proj_block_trans_prob}) for $\cY$, 
the probability that $y_{k,\sigma}$ decreases in the next step is at most $y_{k,\sigma} \leq \delta$.
The probability of an increase is at least $(r_k-\delta) e^{-4\hat{g}}/(q+1)$.
PID follows.

Important consequences of pointwise inward drift (PID) are:
\begin{itemize}
\item {\bf The convex hull property:} There exists an $\ep_0>0$ such that whenever $\ep \leq \ep_0$ and $\yy \in L^{(\ep)}$, then $\yy^*:=\yy-\tfrac{1}{2n}\ff(\yy)$ is in the convex hull 
of $L^{(\ep)}$.

This property follows from PID, since for any $\ep<\ep_0$ and since each step of $\cY$ moves no coordinate by more than $1/n$, 
the only points $\yy \in L^{(\ep)}$ for which $\yy^*$ could be outside $L^{(\ep)}$ are those with a coordinate equal to $\ep$.
For such a point and such a coordinate $y_{k,\sigma}$, $y^*_{k,\sigma} \geq \ep+\alpha/n$, which implies that $\yy^*$ must be in $L^{(\ep)}$ as $y^*_{\ell,j}$ is obviously also at least $\ep$ if $y_{\ell,j} > \ep$. 

By the simplex form of $L^{(\ep)}$, the convex hull property automatically means that $\yy^*$ is in the convex hull of the set $A$ of those points of $L^{(\ep)}$ that are within $L^\iy$-distance $1/n$ of $\yy^*$. 

\item {\bf $\cY$ can be censored:} By mimicking the proof of Lemma \ref{lemma:to_small_box_in_linear_time} and using Lemma \ref{lemma:no_escape_with_negative drift}, it follows from PID that for $\ep$ sufficiently small and $Y_0(k,\sigma) < \ep$, $Y_t(k,\sigma)$ crosses $3\ep$ within $O(n)$ and remains above $2\ep$ for an exponential time.
Since there are only finitely many coordinates, the whole process enters $L^{(3\ep)}$ in linear time and stays in $L^{(2\ep)}$ for an exponential time.

\end{itemize}

\noindent {\bf Censoring under the convex hull property:} Pick $\ep<\ep_0$ as in the convex hull assumption and define $\tilde{\cY}=\tilde{\cY}^{(\ep)}$ as follows.
\begin{itemize}
    \item From any position $\yy \in L^{(\ep)}$ where there are possible steps of $\cY$ that lead outside $L^{(\ep)}$, make a jump that lands in one of the points of $A$ according to a probability distribution that gives $\E[\tilde{\bY}_{t+1}|\tilde{\bY}_t=\yy]=\yy^*$, i.e.\ whose drift is the same vector field as for $\cY$.
    \item From a position $\yy$ from which $\cY$ cannot jump outside $L^{(\ep)}$, use the same transition probabilities for $\tilde{\cY}$.
\end{itemize}
Since $\tilde{\cY}$ and $\cY$ have the same drift field, $\tilde{\cY}$ inherits Lyapunov properties from $\cY$.

By the PID property, the mixing times for $\tilde{\cY}$ and $\cY$ differ by at most $O(n)$.
Hence we may identify $\cY$ with $\tilde{\cY}$ and focus on the latter,
keeping in mind that we need to add an $O(n)$ term to the mixing time result in the end.
For convenience, we allow ourselves to abuse notation by dropping the $\sim$ while doing so.
By Lemma \ref{lemma:proj_block_measure_lyapunov_on_compact_interiors}, Theorem \ref{thm:main_theorem} or Theorem \ref{thm:exponential_mixing_time} applies, 
according to the stationary-point structure of $h$,
provided that the Jacobian $\nabla \ff(\aaa)$ of $\ff$ at $\aaa$ is of suitable form.
However, the linearization of $\ff$ around $\aaa$ immediately gives
\[\nabla \ff(\aaa) = 2B(\aaa)\cH(\aaa),\]
which is covered by Remark \ref{remark:Sigma*H}.
If $\aaa$ is the unique local minimizer, the mixing time is thus
\[\taum(\cY;\ep) = \taum(\tilde{\cY};\ep)+O(n) = \frac{1}{2\lambda_{1}(B(\aaa)\cH(\aaa))}n\log n + O(n)\]
by Theorem \ref{thm:main_theorem}, 
whereas it is at least exponential in the presence of another nondegenerate interior local minimizer.

Before summarizing, we make the following helpful observation:
\begin{lemma} \label{lemma_BE=I}
The Hessian $\cE=\cE(\aaa)$ of the entropy part $e(\yy)=\sum_{k=1}^d\sum_{\si=0}^q y_{k,\si}\log y_{k,\si}$ of $h$, is the inverse of $B$.
\end{lemma}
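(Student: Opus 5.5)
Both $\cE$ and $B$ are block diagonal with one $q\times q$ block per $k$, so it suffices to fix a block $k$ and show $\cE_k B_k = I_q$. The Hessian is taken in the free coordinates $y_{k,1},\dots,y_{k,q}$, with $y_{k,0}=r_k-\sum_{\sigma=1}^q y_{k,\sigma}$ eliminated. The entropy $e(\yy)$ is a sum of terms each involving a single block, so there are no cross-block second derivatives and $\cE$ is indeed block diagonal. Within block $k$, write $\aaa_k=(a_{k,1},\dots,a_{k,q})^\top$, $a_0:=a_{k,0}=r_k-\one^\top\aaa_k>0$, and $D=\mathrm{diag}(\aaa_k)$.

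First compute $\cE_k$. For $\sigma\ge1$ we have $\partial y_{k,0}/\partial y_{k,\sigma}=-1$, so
\[
\frac{\partial e}{\partial y_{k,\sigma}}=\log y_{k,\sigma}-\log y_{k,0}.
\]
Differentiating once more gives
\[
\frac{\partial^2 e}{\partial y_{k,\sigma}\partial y_{k,\tau}}=\frac{\delta_{\sigma\tau}}{y_{k,\sigma}}+\frac{1}{y_{k,0}}.
\]
Hence, at $\aaa$,
\[
\cE_k=D^{-1}+\frac{1}{a_0}\one\one^\top .
\]
This is consistent with the formula for $h'_{k,\sigma}$ derived just before the statement.

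Next, multiply directly, using $B_k=D-\frac{1}{r_k}\aaa_k\aaa_k^\top$ together with $D^{-1}\aaa_k=\one$ and $\one^\top D=\aaa_k^\top$:
\[
\cE_kB_k = I-\frac{1}{r_k}\one\aaa_k^\top+\frac{1}{a_0}\one\aaa_k^\top-\frac{\one^\top\aaa_k}{a_0 r_k}\one\aaa_k^\top .
\]
The three rank-one terms share the factor $\one\aaa_k^\top$. Its coefficient is $\frac{1}{a_0r_k}\bigl(-a_0+r_k-\one^\top\aaa_k\bigr)$, which vanishes because $\one^\top\aaa_k=r_k-a_0$. Thus $\cE_kB_k=I$.

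As an alternative check, one can invert $\cE_k$ by Sherman--Morrison:
\[
\Bigl(D^{-1}+\tfrac{1}{a_0}\one\one^\top\Bigr)^{-1}=D-\frac{D\one\one^\top D}{a_0+\one^\top D\one}=D-\frac{\aaa_k\aaa_k^\top}{r_k}=B_k .
\]

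The only point requiring care is the coordinate convention. The Hessian must be computed in the reduced coordinates, with the sham variable $y_{k,0}$ depending on the others. The $\frac{1}{a_0}\one\one^\top$ term comes precisely from this dependence, and it is what makes the identity exact rather than merely $\cE_k=D^{-1}$. Beyond that, the argument is a routine finite-dimensional computation.
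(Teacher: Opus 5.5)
Your proof is correct and follows the paper's route. You compute the blockwise Hessian $E_k=\mathrm{diag}(1/\aaa_k)+\frac{1}{a_{k,0}}\one\one^\top$ in the reduced coordinates and check that its product with $B_k$ is the identity, a multiplication the paper states but leaves as ``straightforward''; your cancellation via $\one^\top\aaa_k=r_k-a_{k,0}$ and the Sherman--Morrison cross-check fill in that step.
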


\begin{proof}
Taking the second derivatives gives
\[\cE=\cE(\aaa)=\mathrm{blockdiag}(E_1,\ldots,E_d),\]
where 
\[E_k = \mathrm{diag}\left(\frac{1}{\aaa_k}\right)+\frac{1}{a_{k,0}}\one\one^\top.\]
It is then straightforward to compute the product $B\cE$ and verify that $B\cE=I$.
\end{proof}

An immediate consequence of the lemma is that $\lambda_1(B\cH)=1+\lambda_1(B\cG)$. The latter term is usually negative.

We summarize the above conclusions in the following theorem.

\begin{theorem} \label{thm:projected_block_measure}
    Let $\cZ$ be Glauber dynamics for a block measure $\nu^{(g)}$, let $\cY$ be the projection of $\bZ$ onto $\cC$ and let $h$ be the free energy corresponding to $g$. Assume that 
    \begin{itemize}
        \item $h$ has a unique minimizer $\aaa \in \cC^\circ$ with $\cH=\cH(\aaa)$ positive definite,
        \item $\n \nabla h(\yy) \n$ is bounded away from $0$ on each $B(\aaa,\kappa)^c$, $\kappa>0$.
    \end{itemize}
    Write $\aaa_k=(a_{k,1},\ldots,a_{k,q})^\top$,
    \[B_k = \mathrm{diag}(\aaa_k)-\frac{1}{r_k}\aaa_k\aaa_k^\top\]
    and
    \[B=\mathrm{blockdiag}(B_1,B_2,\ldots,B_d).\]
    Then the mixing time of $\cY$ satisfies
    \[\taum(\cY;\ep)=\frac{1}{2\lambda_{1}(B\cH)}n\log n + O(n) =
    \frac{1}{2(1+\lambda_{1}(B\cG))}n\log n + O(n).\]
    In contrast, if $h$ has more than one nondegenerate interior local minimum, then for some $c>0$ and every $\ep < 1/2$,
    \[\taum(\cY;\ep) \geq e^{cn},\,\, \taum(\cZ;\ep) \geq e^{cn}.\]
\end{theorem}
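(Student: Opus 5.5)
The plan is to assemble the ingredients already developed in this subsection and then invoke Theorem \ref{thm:main_theorem} (via Remark \ref{remark:Sigma*H}) for the unimodal case and Theorem \ref{thm:exponential_mixing_time} for the multimodal case.

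\textbf{Reduction to a compact interior set.} First, the PID property shows that $\cY$ enters $L^{(3\ep)}$ within time $O(n)$ with high probability, from any starting point. It also shows that $\cY$ stays in $L^{(2\ep)}$ for an exponentially long time, by Lemma \ref{lemma:no_escape_with_negative drift} applied coordinatewise. Lemmas \ref{lemma:start_in_subset} and \ref{lemma:mixing_on_fast_hit_restricted_set} then let us replace $\cY$ by the censored chain $\tilde{\cY}^{(\ep)}$ on $L^{(\ep)}$, at a cost of $O(n)$ in the mixing time. On the compact set $L^{(\ep)}\subset\cC^\circ$ the expansion (\ref{eq:trans_prob_in_terms_of_h}) holds uniformly. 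We check the discrete MFGOU conditions:
\begin{itemize}
\item increments lie in $\{\pm\ee_{k,j},\ \ee_{k,j}-\ee_{k,i}\}$ scaled by $1/n$;
\item holding and elementary-move probabilities are bounded below;
\item transition probabilities are Lipschitz uniformly in $n$, since $g\in C^3$;
\item the drift is $-\ff/2+O(1/n^2)$, with $\ff$ given by the explicit formula, which is $C^2$.
\end{itemize}
The censoring step preserves the drift by the convex hull property, so $\tilde{\cY}$ is still a discrete MFGOU. Its state space is compact, so Remark \ref{remark:stationary_distribution_redundant} applies.

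\textbf{Unimodal case.} Lemma \ref{lemma:proj_block_measure_lyapunov_on_compact_interiors} supplies the Lyapunov function $V=h-h(\aaa)$ on $L^{(\ep)}$. This $V$ has a nondegenerate minimum at $\aaa$ because $\cH$ is positive definite, and it vanishes only at $\aaa$ by uniqueness of the minimizer. The zeros of $\ff$ in the interior are exactly the stationary points of $h$, since $\nabla h^\top\ff>0$ elsewhere, as the covariance argument shows. Hence $\aaa$ is the unique point satisfying (a)--(b). The Jacobian is $\nabla\ff(\aaa)=2B\cH$, with $B$ and $\cH$ symmetric positive definite. By Remark \ref{remark:Sigma*H}, Theorem \ref{thm:main_theorem} then gives cutoff at $\frac{1}{2\lambda_1(B\cH)}n\log n+O(n)$ for $\tilde{\cY}$, hence also for $\cY$. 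The second expression comes from Lemma \ref{lemma_BE=I}: since $B\cH=B(\cG+\cE)=I+B\cG$, we get $\lambda_1(B\cH)=1+\lambda_1(B\cG)$. Here $B\cG$ is similar to the symmetric matrix $B^{1/2}\cG B^{1/2}$, so its eigenvalues are real and the shift is legitimate.

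\textbf{Multimodal case.} Take two nondegenerate interior local minima. The first part of Lemma \ref{lemma:proj_block_measure_lyapunov_on_compact_interiors} gives the local Lyapunov condition $\nabla V_i^\top\ff\ge cV_i$ near each minimum. Theorem \ref{thm:exponential_mixing_time} then yields $\taum(\cY;\ep)\ge e^{cn}$. For $\cZ$ the same bound follows because $\cY$ is a deterministic function of $\cZ$, so $\|\Pro_\zz(\bZ_t\in\cdot)-\nu\|_{TV}\ge\|\Pro_{\yy(\zz)}(\bY_t\in\cdot)-\pi\|_{TV}$.

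\textbf{Main obstacle.} The delicate step is confirming that the censored chain is genuinely a discrete MFGOU that satisfies the global hypotheses of Theorem \ref{thm:main_theorem}. This includes the uniqueness of the zero of $\ff$, and the fact that the boundary modification keeps the drift equal to $-\ff/2$ up to $O(1/n^2)$ while preserving the required lower bounds on the move probabilities. Near $\partial L^{(\ep)}$ I would handle this by choosing the jump distribution on the corners of $A$ to be a mixture that retains mass on $\bz$ and $\pm\ee_j$. That choice works only where these points stay in $L^{(\ep)}$; points on the boundary are the exception. There I would instead invoke the earlier observation that only the local regularity near $\aaa$ is needed.
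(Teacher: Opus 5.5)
Your proposal is correct and follows essentially the same route as the paper. The steps match: PID and the convex hull property censor $\cY$ onto $L^{(\ep)}$ at an $O(n)$ cost, the Lyapunov lemma is used with $V=h-h(\aaa)$, and the Jacobian $\nabla\ff(\aaa)=2B\cH$ is passed through Remark~\ref{remark:Sigma*H} into Theorem~\ref{thm:main_theorem}. Lemma~\ref{lemma_BE=I} then gives $\lambda_1(B\cH)=1+\lambda_1(B\cG)$, and the multimodal case uses Theorem~\ref{thm:exponential_mixing_time} together with the projection inequality.

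The worry in your last paragraph is unnecessary. The discrete MFGOU move-probability and Lipschitz conditions are only required on a ball $B(\aaa,\kappa)$. The censored boundary jumps therefore need only be bounded and have mean $\yy-\frac{1}{2n}\ff(\yy)$, which is exactly what the paper's construction provides.
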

Thus, in the unique minimizer setting of the theorem, the mixing time depends only on the smallest eigenvalue of the matrix $B\cH$, which plays the role of the effective curvature of the free energy at equilibrium.

\medskip

\noindent {\bf Lifting from the projected chain to the full chain}

\smallskip

Assume that the conditions of Theorem \ref{thm:projected_block_measure} hold.
Since $\cY$ is the projection of $\cZ$, the mixing time of $\cY$ only gives a lower bound for that of $\cZ$.
If the starting configuration of $\cZ$ is one where all vertices have the same spin, then at any point $t$ in time, given the number of vertices of
each possible spin $\sigma$, 
the vertex sets $A_{t,\sigma} := \{v \in V:\bZ_t(v)=\sigma\}$, are uniformly distributed among all partitions of $V$ into subsets of the given sizes.
Hence the total-variation distance of $\cL(\bZ_t)$ from its stationary distribution equals that of $\cL(\bY_t)$ from its stationary distribution.  
Thus the mixing time of the two processes agree.
This no longer holds for other starting configurations.
However, the block-measure framework holds an opportunity that addresses
this issue in a clean way, as follows.


The key observation is that the framework allows us to divide each block into sub-blocks.
To describe this, assume for simplicity that $d=1$ for the time being. 
Then, concretely, we partition $V$ into the parts $V_1,\ldots,V_\ell$, and replace $\cY$ with the process $\hat{\cY}$ that projects $\bZ_t$ onto the process $\hat{\bY}_t$ that counts the number of vertices of each spin in each $V_j$.
Compared with $\bY_t$, each $Y_t(\si)$ then gets replaced in $\hat{\bY}_t$
by $(\hat{Y}_t(1,\si),\ldots,\hat{Y}_t(\ell,\si))$, where $\hat{Y}_t(j,\si)$
is the number of vertices of $v \in V_j$ with $Z_t(v)=\si$.

The process $\hat{\cY}$ thus created is simply a refined projection of $\cZ$ and fits into the above block-measure framework. 
If $g$ is the potential under the $\cY$-projection,  the potential of $\hat{\cY}$ becomes 
\[\hat{g}(y_{1,1},\ldots,y_{1,q},\ldots,y_{\ell,1},\ldots,y_{\ell,q})
= g\left(\sum_{j=1}^\ell y_{j,1},\ldots,\sum_{j=1}^\ell y_{j,q}\right),\]
and its corresponding free energy is given by
\[\hat{h}(\hat{\yy}) = 
\sum_{j=1}^\ell\sum_{\si=0}^q y_{j,\si}\log y_{j,\si}
+\hat{g}(\hat{\yy})\]
with the usual convention $y_{j,0}=s_j-\sum_{\si=1}^qy_{j,\si}$, where $s_j$ is the sub-block-$j$ proportion $|V_j|/n$.
The entropy term in $\hat{h}$ is uniquely minimized by distributing each spin proportion among the sub-blocks proportionally to their sizes. 
It follows that the stationary points of $\hat{h}$ are precisely these: if \(y\) is stationary for \(h\), then the corresponding stationary point, $\hat{\yy}$, of $\hat{h}$ is given by the vector identities
\[ \hat{\yy}_{j}=s_{j}\yy,\, j=1,\ldots,\ell.\]
Consequently, if \(h\) has the unique stationary point and minimizer $\aaa$, then \(\hat{h}\) has the unique stationary point and minimizer $\hat{\aaa}$ given by
\[ \hat{\aaa}_{j}=s_{j}\aaa,\, j=1,\ldots,\ell. \] 
Since the entropy gradient becomes unbounded at the boundary of the refined state space, 
compactness implies that \(\|\nabla\hat{h}\|\) is bounded away from \(0\) outside every neighborhood of \(\hat{\aaa}\). 
Thus, \(\hat{h}\) satisfies the corresponding assumptions of Theorem \ref{thm:projected_block_measure} whenever \(h\) does. 

Next, we need to understand what the matrices $\hat{B}(\hat{\aaa})$ and $\hat{\cG}(\hat{\aaa})$ corresponding to $B(\aaa)$ and $\cG(\aaa)$ are.
Each $V_j$ forms a new block, so $\hat{B}(\hat{\aaa})$ is a block-diagonal matrix 
of $\ell$ matrix elements, of which the $j$'th element becomes $s_j\mathrm{diag}(\aaa)-\tfrac{1}{s_j}(s_j\aaa)(s_j\aaa)^\top = s_j B(\aaa)$, $j=1,\ldots,\ell$.
Hence
\[\hat{B}(\hat{\aaa})= \mathrm{blockdiag}(s_1 B(\aaa),\ldots,s_\ell B(\aaa)).\]
Taking second derivatives of $\hat{g}$ gives
\[ \hat{\cG}(\hat{\aaa}) = 
\begin{pmatrix} 
\cG(\aaa) & \cG(\aaa) & \cdots & \cG(\aaa) \\ 
\cG(\aaa) & \cG(\aaa) & \cdots & \cG(\aaa) \\ 
\vdots & \vdots & \ddots & \vdots \\ 
\cG(\aaa) & \cG(\aaa) & \cdots & \cG( \aaa) 
\end{pmatrix}. 
\]
Taking the product and suppressing the arguments gives
\[ \hat{B}\hat{\cG} = 
\begin{pmatrix} 
s_1B\cG & s_1B\cG & \cdots & s_1B\cG \\ 
s_2B\cG & s_2B\cG & \cdots & s_2B\cG \\ 
\vdots & \vdots & \ddots & \vdots \\ 
s_\ell B\cG & s_\ell B\cG & \cdots & s_\ell B\cG 
\end{pmatrix}. \]
This $\ell q \times \ell q$-matrix has a spectrum that contains all eigenvalues of $B\cG$,
since if $\vv$ is an eigenvector for $B\cG$, then $[s_1\vv; \ldots;s_\ell\vv]$ is an
eigenvector of $\hat{B}\hat{\cG}$ for the same eigenvalue.
All remaining eigenvalues of $\hat{B}\hat{\cG}$ are zero.
Hence $\lambda_1(\hat{B}\hat{\cG}) = 0 \wedge \lambda_1(B\cG)$, so whenever the smallest eigenvalue of $B\cG$ is nonpositive, it follows from Theorem \ref{thm:projected_block_measure} that
\[\taum(\hat{\cY};\ep) = \frac{1}{2(1+\lambda_1(B\cG))}n\log n + O(n).\]
The analysis extends to where the underlying block measure has more than one block, 
when each block $B_k(\aaa_k)$ in $B(\aaa)$ is divided into $\ell$ sub-blocks of relative proportions $s_{k,1},\ldots,s_{k,\ell}$ within the given block (i.e.\ $\sum_j s_{k,j}=1$).
The preceding entropy argument applies separately within each original block. 
Hence the stationary points of $\hat{h}$ are precisely the proportional refinements of those of \(h\); in particular, its unique stationary point and minimizer $\hat{\aaa}$ is given by the vector identities 
\[ \hat{\aaa}_{k,j}=s_{k,j}\aaa_k, \, k=1,\ldots,d, \, j=1,\ldots,\ell,\]
and $\hat{h}$ satisfies the corresponding assumptions of Theorem \ref{thm:projected_block_measure} whenever $h$ does.
The same matrix calculation as with a single original block applies blockwise to $\hat{\cG}(\hat{\aaa})$, and 
\[\hat{B_k}(\hat{\aaa}) = \mathrm{blockdiag}\left(s_{k,1}B_k(\aaa),\ldots,s_{k,\ell}B_k(\aaa)\right).\]  
The resulting $\hat{B}\hat{\cG}$ has a spectrum that contains all eigenvalues of $B\cG$, plus additional zeros, and the same mixing-time result holds. 

To apply this for lifting from $\cY$ to $\cZ$, divide each block into $q+1$ sub-blocks, or {\em baskets}, where the $j$'th basket of the block consists of the vertices of that block that start with spin $j$.
Then, by the same symmetry as for the extremal measures, mixing for the 
corresponding projection $\hat{\cY}$ occurs precisely when mixing occurs for $\cZ$: at time $t$, conditional on the number of spins of each value within a basket, 
the allocation of vertices in that basket to the spin values is uniform by symmetry.

This is not quite yet a bound of the worst-case mixing time of $\cZ$, though, since the application of Theorem \ref{thm:projected_block_measure} presupposes that the worst-state basket proportions are independent of $n$.
Another complication is that one or more baskets may be empty or near empty at start.
The latter problem, however, is taken care of by the above censoring arguments: at the cost of an extra term $O(n)$ in the mixing-time upper bound, $\cY$ can be regarded as confined to $B(\aaa,\kappa)$ for arbitrarily small fixed $\kappa>0$, which gives baskets of macroscopic size 
in $\hat{\cY}$.
More precisely,
starting from an arbitrary configuration, first run $\cZ$ until $\cY$ enters $B(\aaa,\kappa/2)$. Whp, this takes time $O(n)$.
At that time, create $\hat{\cY}$ by partitioning each block according to the current spins and regard the resulting spin classes as baskets. 
Since $\aaa$ is an interior point of $C$, if $\kappa$ is sufficiently small, every such basket contains at least $cn$ vertices for some $c>0$.
The strong Markov property and the censoring argument therefore reduce the analysis to refined block measures whose basket proportions range over a compact set bounded away from the boundary.

To address the former obstacle, we need a block-measure framework that allows for block proportions that vary with $n$.
What that in turn means for the underlying MFGOU framework, is that 
it has to be extended to drift fields that depend on $n$: $\ff=\ff^{(n)}$.
Suppose that the $\ff^{(n)}$ are uniformly $C^2$ with
\[\ff^{(n)}(\yy) = \cJ^{(n)}(\yy-\yy^{(n)}_0) + O(\n \yy-\yy^{(n)}_0\n^2)\]
where the quadratic remainder is uniform in $n$ and the matrices $\cJ^{(n)}$ are uniformly positive definite. 
If the Lyapunov assumptions also hold uniformly in $n$, then the proofs of Theorem \ref{thm:main_theorem} 
and Theorem \ref{thm:exponential_mixing_time} apply without change to such $n$-dependent families, 
with $\cJ$ replaced by $\cJ^{(n)}$ in Theorem \ref{thm:main_theorem}.
The corresponding $n$-dependent version of Theorem \ref{thm:projected_block_measure} follows with $n$-dependent quantities $g$, $e$, $h$, $B$, $\cH$ and $\cG$.

In the present case, let $u^{(n)}_{k,\sigma}$ denote the starting proportion of all vertices that belong to block $k$ and have spin $\sigma$, and set
\[u_{k,0}^{(n)}=r_k - \sum_{\si=1}^q u^{(n)}_{k,\si}.\]
Suppose that $\uu^{(n)}=(u^{(n)}_{k,\sigma})_{k,\sigma}$ ranges over a compact neighborhood of $\aaa$. 
The corresponding relative spin proportions within block $k$, i.e.\ the basket proportions in the refinement, are $s^{(n)}_{k,\si} = u^{(n)}_{k,\si}/r_k$.
These proportions generate $n$-dependent $\hat{B}^{(n)}$ and $\hat{\cH}^{(n)}$ as above.   
However, as there, if $\lambda_1(B\cG) \leq 0$, then the smallest eigenvalue of their product
agrees with that of $B\cH$, and is thus independent of $n$.
By the preceding arguments and since $\uu^{(n)}$ varies in a compact neighborhood of $\aaa$, the corresponding free energy $\hat{h}^{(n)}$ has a gradient that is bounded away from $0$ uniformly in $n$ on all $B(\hat{\aaa}^{(n)},\kappa)^c$. 
The mixing-time bound that follows from the $n$-dependent version of Theorem \ref{thm:projected_block_measure} then has a leading coefficient that is independent of $n$, and agrees with that of $\cY$.
Before summarizing, notice that since $B\cG$ is similar to $B^{1/2}\cG B^{1/2}$ and $B$ is positive definite, we have $\lambda_1(B\cG) \leq 0$ if and only if $\lambda_1(\cG) \leq 0$.
We arrive at:

\begin{corollary} \label{corollary:block_measure_under_concave_g}
    Under the unique-minimum assumptions of Theorem \ref{thm:projected_block_measure}, suppose in addition that 
    $\lambda_1(\cG) \leq 0$. 
    Then
    \[\taum(\cZ;\ep)=\frac{1}{2\lambda_{1}(B\cH)}n\log n + O(n)
    =\frac{1}{2(1+\lambda_{1}(B\cG))}n\log n + O(n).\]
\end{corollary}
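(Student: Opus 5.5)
The plan is to prove the lower and upper bounds separately and to let the refined (basket) projection carry all the work.

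\emph{Lower bound.} Since $\cY$ is a function of $\cZ$, we have $\n \Pro_{\zz}(\bY_t\in\cdot)-\pi\n_{TV}\le \n \Pro_{\zz}(\bZ_t\in\cdot)-\nu\n_{TV}$. Hence $\taum(\cZ;\ep)\ge\taum(\cY;\ep)$, and Theorem \ref{thm:projected_block_measure} supplies the lower bound $\frac{1}{2\lambda_1(B\cH)}n\log n-O(n)$. The second expression follows from Lemma \ref{lemma_BE=I}, which gives $\lambda_1(B\cH)=1+\lambda_1(B\cG)$.

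\emph{Upper bound, reduction step.} Start from an arbitrary configuration $\zz$. By the PID property and the censoring argument (Lemmas \ref{lemma:start_in_subset}, \ref{lemma:mixing_on_fast_hit_restricted_set} together with Lemma \ref{lemma:to_small_box_in_linear_time}), $\cY$ enters $B(\aaa,\kappa/2)$ within time $O(n)$ whp and then stays in $B(\aaa,\kappa)$ for an exponentially long time. At the entrance time $\tau$, which is a stopping time, partition each block $N_k$ into the $q+1$ baskets given by the current spins. Every basket then has size at least $cn$. By the strong Markov property it suffices to bound, uniformly over such configurations $\bZ_\tau$, the mixing time of $\cZ$ started there.

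Next, use the symmetry argument from the paper. Conditional on the basket spin counts $\hat{\bY}_t$, the law of $\bZ_t$ is uniform over configurations with those counts. This holds because the dynamics and $\nu$ are invariant under permutations within each basket, and the initial configuration is constant on each basket. It follows that $\n\cL(\bZ_t)-\nu\n_{TV}=\n\cL(\hat{\bY}_t)-\hat\pi\n_{TV}$, where $\hat\pi$ is the projection of $\nu$.

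\emph{Upper bound, applying the theorem to the refined chain.} The refined projection $\hat\cY$ is Glauber dynamics for a block measure with potential $\hat g$, but with $n$-dependent basket proportions $s^{(n)}_{k,\sigma}=u^{(n)}_{k,\sigma}/r_k$. These proportions range over a compact set bounded away from $0$. I apply the $n$-dependent version of Theorem \ref{thm:projected_block_measure}, which requires the following checks.
\begin{itemize}
\item[(i)] The unique minimizer of $\hat h^{(n)}$ is $\hat\aaa^{(n)}_{k,j}=s^{(n)}_{k,j}\aaa_k$, with $\|\nabla\hat h^{(n)}\|$ bounded below off neighborhoods of it, uniformly in $n$. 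This follows from the entropy/compactness argument above applied uniformly over the compact parameter set.
\item[(ii)] The spectrum of $\hat B\hat\cG$ equals that of $B\cG$ plus zeros. Since $\lambda_1(\cG)\le0$ gives $\lambda_1(B\cG)\le0$ by similarity to $B^{1/2}\cG B^{1/2}$, we get $\lambda_1(\hat B\hat\cH)=1+\lambda_1(\hat B\hat\cG)=1+\lambda_1(B\cG)=\lambda_1(B\cH)$ for every $n$.
\item[(iii)] The Jacobians $2\hat B^{(n)}\hat\cH^{(n)}$ are uniformly nondegenerate, and the Lyapunov constants of Lemma \ref{lemma:proj_block_measure_lyapunov_on_compact_interiors} are uniform.
\end{itemize}
Then $\taum(\hat\cY)\le \frac{1}{2\lambda_1(B\cH)}n\log n+O(n)$. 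Adding the $O(n)$ entrance time and the censoring error gives the claim.

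\emph{Main obstacle.} The delicate step is (iii): verifying that all constants in Theorem \ref{thm:main_theorem} are uniform in the parameter. These are the Lyapunov constant $c$, the $O(1/n)$ drift errors, the Lipschitz constants, the local ball radius $\kappa$, and the quadratic remainder bound. The eigenvalue $\lambda_1$ is already exactly $n$-independent, but these other constants must also be controlled. I would handle them by continuity and compactness in the parameter $\mathbf s$, using the fact that $h'$, $\nabla\hat h$ and the transition probabilities depend continuously (indeed smoothly) on $\mathbf s$. A second subtlety is that $\hat B\hat\cG$ is not symmetric. The uniform invertible change of coordinates of Remark \ref{remark:Sigma*H}, with $\hat B^{1/2}$ uniformly bounded above and below, keeps the lattice mesh $O(1/n)$ and the constants uniform.
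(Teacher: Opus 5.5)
Your proposal is correct and follows essentially the same route as the paper. The lower bound comes from the projection. For the upper bound you run the chain until $\cY$ is near $\aaa$, split each block into baskets by the current spins, and use within-basket symmetry to identify the distance of $\cZ$ from equilibrium with that of the refined chain; you then apply the $n$-dependent version of Theorem \ref{thm:projected_block_measure}, using that the spectrum of $\hat B\hat\cG$ is that of $B\cG$ together with extra zeros.

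The uniformity of constants you flag as the main obstacle is exactly what the paper asserts when extending the MFGOU theorems to $n$-dependent families. Your list should also include the PID constants, since the refined chain starts at a corner of each basket simplex. These constants are uniform because every basket has size at least $cn$.
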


The refinement technique used above is closely related to the basket-wise coupling method of Cuff et al \cite{Cuff}. The block-measure framework provides a particularly direct implementation: the baskets may simply be regarded as additional blocks, after which the lifting follows from the same projected-chain theorem and an explicit spectral calculation.
The condition $\lambda_1(\cG) \leq 0$ is mild and holds for all attractive Ising/Potts model applications, and all mixture model and LDA applications that now follow.

\subsubsection{Mean-field Ising and Potts models} \label{subsub:Ising/Potts}
Let $G=(V,E)$ be an $n$-vertex graph. 
The $q+1$-state Potts model is a probability measure on the set of possible ways to assign each vertex of $G$ one of $q+1$ states (or {\em spins}) $0,1,\ldots,q$;
for each $\zz \in \{0,1,\ldots,q\}^V$
\[\nu(\zz) \propto \exp\left\{-\sum_{\sigma=0}^q M_\sigma n_\sigma(\zz) - \sum_{0 \leq \sigma < \sigma' \leq q} J_{\sigma,\sigma'} e_{\sigma,\sigma'}(\zz)\right\},\]
where the $M_\sigma$'s are real external-field parameters and $J_{\sigma,\sigma'}$'s are nonnegative interaction parameters, respectively. Here $n_\si(\zz)$ denotes the number of $v \in V$ with $z_v=\si$, while 
$e_{\si,\si'}$ denotes the number of edges $\{u,v\}$ for which $z_u=\si$ and $z_v=\si'$.
(This is not the most general formulation possible, but it certainly covers the applications in this paper.)
The Ising model is the special case $q=1$.

In this generality, the model does not fit into the block-measure framework, but with an appropriate block structure of the graph and of the parameters, numerous examples fit into the framework.
We will walk through a number of such examples.

\vspace{-0.4cm}
\paragraph{The Curie-Weiss Ising model}
This is the case where $q=1$ and $G$ is the complete graph. Letting $p_\sigma(\zz)$, $\sigma \in \{0,1\}$ be the proportion of vertices with spin $\sigma$, the model reads
\[\nu(\zz) \propto \exp\left\{-n(\alpha p_1(\zz) + \beta p_1(\zz)(1-p_1(\zz)))\right\},\]
with $\alpha$ being the external field and $\beta \geq 0$ the inverse temperature.
(This corresponds to $M_0=0$, $M_1=\alpha$ and $J_{0,1} = \beta/n$ in the general formulation; this is the standard scaling under which the model has a nondegenerate limit.)
The corresponding projected measure onto $L= [0,1] \cap \tfrac{1}{n}\Z$, writing $y=p_1(\zz)$, is
\[\pi(y) \propto \binom{n}{ny}\exp\left\{-n(\alpha y  + \beta y(1-y)) \right\}.\]

The setup sits squarely as an example of a block measure and its projection,  with $d=1$, $q=1$ and potential $g(y)=\alpha y + \beta y(1-y)$. 
Hence the model falls within the block-measure framework of Theorem \ref{thm:projected_block_measure}. 
The corresponding free energy is
\[h(y)=y\log y + (1-y)\log(1-y) + \alpha y + \beta y(1-y).\]
It is easy to show there is a critical value $\beta_c(\alpha)$, such that there is a unique and nondegenerate local minimizer for $\beta<\beta_c(\alpha)$, and more than one nondegenerate local minimizer for $\beta > \beta_c(\alpha)$.

For $\beta>\beta_c(\alpha)$, Theorem \ref{thm:projected_block_measure} tells us that the mixing time is at least exponential.
For $\beta<\beta_c(\alpha)$, let $a$ be the minimum of $h$, i.e.\ the solution of
\[\log\frac{y}{1-y}+\alpha+\beta(1-2y)=0.\]
Then $\cH=h''(a)= \tfrac{1}{a(1-a)}-2\beta$ and since $g$ is differentiable on $[0,1]$,
Theorem \ref{thm:projected_block_measure} gives for projected Glauber dynamics
\[\taum(\cY;\ep)=\frac{1}{2(1-2\beta a(1-a))}n\log n + O(n),\]
and Corollary \ref{corollary:block_measure_under_concave_g} gives also for the original Glauber dynamics
\[\taum(\cZ;\ep)=\frac{1}{2(1-2\beta a(1-a))}n\log n + O(n).\]
In the case $\alpha=0$, the symmetry around $a=1/2$ is helpful and one can easily compute that $\beta_c=2$,
which also agrees with the point where $h''(1/2)$ switches from positive to negative. 
(This is different when $\alpha \neq 0$, where $\beta_c(\alpha)$ is characterized by the appearance of a new stationary point elsewhere, while $h''(a)$ remains positive.)
Hence for $\beta<2$, up to an $O(n)$ term,
\[\taum(\cY;\ep) = \taum(\cZ;\ep) = \frac{1}{2-\beta}n\log n,\]
and for $\beta>2$, the mixing time is at least exponential.
This recovers (in our parametrization) the cutoff location obtained by Ding, Lubetzky and Peres \cite{DLP} and Levin, Luczak and Peres \cite{LLP}.
\begin{remark}
{\rm I am not aware of a previous statement of the resulting mixing-time formula for $\alpha\neq0$. } 
\end{remark}

\vspace{-0.4cm}
\paragraph{The Curie-Weiss Potts model}
As for the Curie-Weiss Ising model, $G$ is the complete graph, but $q>1$.
It is uncommon to go beyond the standard case of zero external field and a common interaction parameter $\beta$, and we shall not do so here.
We have
\[g(\yy) = \beta\left(\sum_{0 \leq \si < \si' \leq q}y_\si y_{\si'}\right),\]
where we keep in mind that $y_0=1-\sum_{\si=1}^q y_\sigma$, and hence
\[h(\yy) = \sum_{\si=0}^q y_\si\log y_\si + \beta\left(\sum_{0 \leq \si < \si' \leq q}y_\si y_{\si'}\right).\]
The mixing-time behavior of the Curie--Weiss Potts model was analyzed by Cuff et al.\ \cite{Cuff}. (For recent sharp mixing-time asymptotics in the low-temperature regime, see Kim and Lee \cite{KimLee2026}.)

By symmetry, $\aaa=\tfrac{1}{q+1}\one$ is a stationary point of $h$.
There is a critical value
$\beta_c=\beta_c(q)$ such that this is the unique and nondegenerate local minimizer for $\beta<\beta_c$, whereas for $\beta>\beta_c$, there are distinct nondegenerate local minimizers.
We have
\[B(\aaa)=B=\mathrm{diag}(\aaa)-\aaa\aaa^\top = \frac{1}{q+1}\left(I-\frac{1}{q+1}\one\one^\top\right),\]
\[\cG(\aaa)=\cG=-\beta(I+\one\one^\top).\]
Multiplying and recalling that $(\one\one^\top)^2=q\one\one^\top$ gives
\[B\cG=-\frac{\beta}{q+1}I.\]
Hence, for $\beta<\beta_c(q)$,
\[\taum(\cY;\ep)=\frac{1}{2\left(1-\frac{\beta}{q+1}\right)}n\log n + O(n) 
=\frac{q+1}{2(q+1-\beta)}n\log n + O(n),\]
while for $\beta>\beta_c(q)$, $\taum(\cY;\ep) \geq e^{\Omega(n)}$ when $\ep < 1/2$.
This is in agreement with \cite{Cuff}. Corollary \ref{corollary:block_measure_under_concave_g} lifts this back to unprojected Glauber dynamics $\cZ$:
\[\taum(\cZ;\ep)=\frac{q+1}{2(q+1-\beta)}n\log n + O(n).\]

Cases with nonzero external field are also covered by Theorem \ref{thm:projected_block_measure}, but since the value of $\aaa$ is intractable, 
we will not be able to state anything that is essentially more informative than Theorem \ref{thm:projected_block_measure} and Corollary \ref{corollary:block_measure_under_concave_g} already are. 

\vspace{-0.4cm}
\paragraph{The Ising model on complete multipartite graphs}
Let the positive numbers $r_1,\ldots,r_d$ sum to one, and let $G$ be the complete $d$-partite graph on $n$ vertices whose blocks have sizes $nr_k$, $k=1,\ldots,d$.
Thus, each vertex is adjacent to every vertex outside its own block, and there are no intra-block edges.
Consider the Ising model on $G$ with inverse temperature $\beta$ and without external field:
\[\nu(\zz) \propto  e^{-ng(\yy)} := \exp\left\{-\beta n\sum_{k=1}^d\sum_{\ell \neq k} y_k(r_\ell-y_\ell)\right\},\]
where $y_k$ is the proportion of vertices in block $k$ that are assigned spin $1$ in  $\zz$.
The corresponding projected measure on $([0,r_1] \times \ldots \times [0,r_d]) \cap \tfrac{1}{n}\Z^d$ is 
\[\pi(\yy) \propto \left(\prod_{k=1}^d \binom{nr_k}{ny_k}\right)e^{-ng(\yy)}.\]
Its associated free energy is 
\[h(\yy)=\sum_k (y_k\log y_k + (r_k-y_k)\log(r_k-y_k))+g(\yy).\]
This is clearly an example of a block measure and its projection in the above framework with $q=1$.
We analyze this model as follows.
We compute the matrix $B\cG$, whose smallest eigenvalue determines the mixing-time constant via Theorem \ref{thm:projected_block_measure}.
We then evaluate this eigenvalue and treat the special cases $d=2$, equal block sizes, and an extension to regular base graphs.
Finally we summarize the mixing-time behavior in Propositions \ref{prop:complete_d-partite_graphs} and \ref{prop:complete_Gb-partite_graphs}.

By symmetry, the point $\aaa:=\tfrac{1}{2}\bm{r}$ is a stationary point.
For $\beta$ below some critical value $\beta_c>0$, it is also the unique stationary point and the global minimum of $h$, whereas for $\beta>\beta_c$, $h$ has multiple nondegenerate local minimizers, in which case the mixing time is at least exponential.
Assume that $\beta<\beta_c$.
Then, in the above framework, we get $B=\tfrac{1}{4}\mathrm{diag}(\bm{r})$ and 
\[\cG=2\beta(I-\one\one^\top),\]
which results in
\[B\cG=\frac{1}{2}\beta(\mathrm{diag}(\bm{r})-\bm{r}\one^\top).\]
Using the matrix determinant lemma, one obtains the characteristic polynomial
\[\mathrm{det}(\mathrm{diag}(\bm{r})-\bm{r}\one^\top-\lambda I) = \left(1-\sum_{k=1}^d\frac{r_k}{r_k-\lambda}\right) \prod_{k=1}^d (r_k-\lambda),\]
whose smallest genuine root is the unique negative solution $\lambda_*$ of
\[1-\sum_{k=1}^d\frac{r_k}{r_k-\lambda}=0.\]
\begin{remark}
 {\rm All roots of the left-hand factor are genuine eigenvalues of $\mathrm{diag}(\bm{r})-\bm{r}\one^\top$, 
 whereas the roots $r_\ell$ of the right-hand side product are canceled by poles of the left-hand factor whenever they are unique among the $r_k$'s. 
 Indeed, the multiplicity of $r_\ell$ as a root of the characteristic polynomial equals $|\{1 \leq k \leq d:r_k=r_\ell\}|-1$.}
\end{remark}
 
Theorem \ref{thm:projected_block_measure} gives a mixing time 
\[\frac{1}{c} n\log n + O(n)\] 
with $c=2+\lambda_* \beta$.

For $d \geq 5$, no explicit expression for $\lambda_*$ exists in general, while for $d=3,4$ such expressions exist but are rather uninformative.

For $d=2$, $\lambda_* = -\sqrt{r_1r_2}$ and hence 
\[c=2-\sqrt{r_1r_2}\beta.\]

In the case $r_k=1/d$, $\lambda_*=-(d-1)/d$ and thus 
\[c=2-\beta(d-1)/d.\] 
(This agrees well with the Curie-Weiss case, since the larger the $d$, the closer the model is to the Curie-Weiss Ising model.)

\smallskip

When $r_k=1/d$, the same computation extends directly to a broader class of interaction graphs.
Let $G_b=(V_b,E_b)$ be a $\delta$-regular graph on $d$ vertices and construct the $n$-vertex complete $G_b$-partite graph $G$ as follows.
Partition $[n]$ into $d$ equal-sized blocks and create a one-to-one map from the set of blocks to the vertices of $G_b$.
Then let the edge set of $G$ consist of all pairs of vertices whose respective blocks are mapped to neighboring vertices of $G_b$.

For the Ising model on $G$, we get
\[g(\yy) = \beta \sum_{k=1}^d\sum_{\ell:\{k,\ell\} \in E_b} y_k\left(\frac{1}{d}-y_\ell\right).\]
Then $\aaa=\tfrac{1}{2d}\one$ and $B=\tfrac{1}{4d}I$.
For $\cG$, we get $G_{k,\ell} = -2\beta$ if $\{k,\ell\} \in E_b$ and $0$ otherwise.
This makes $B\cG$ into $-\beta\delta/2d$ times a stochastic matrix, and consequently the smallest eigenvalue is $-\beta\delta/2d$.
Theorem \ref{thm:projected_block_measure} applies and gives 
\[c=2-\delta\beta/d.\]

Before stating the two resulting propositions, we make the following useful observation:

\begin{lemma} \label{lemma:H_pd_everywhere}
In each of the setups above, letting $\cH(\yy)$ denote the Hessian of $h$ at an arbitrary point and $\aaa:=\tfrac{1}{2}\bm{r}$, $\cH(\yy)$ is positive definite for all $\yy$ whenever $\cH(\aaa)$ is positive definite.  
\end{lemma}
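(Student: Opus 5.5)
The plan is to use that in each setup the Hessian of $g$ is constant, so $\cH(\yy)=\cE(\yy)+\cG$, and then show that the entropy Hessian $\cE(\yy)$ is smallest, in the positive semidefinite order, at $\aaa=\tfrac12\bm r$. That single matrix inequality gives the lemma.

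\textbf{Step 1 (constant $\cG$).} In the complete multipartite setup, $g(\yy)=\beta\sum_k\sum_{\ell\neq k}y_k(r_\ell-y_\ell)$ is a quadratic polynomial. Hence $\cG=2\beta(I-\one\one^\top)$ for every $\yy$. In the $G_b$-partite setup, $g$ is again quadratic, so $\cG$ is the constant matrix with entries $-2\beta$ on the edges of $G_b$ and zero elsewhere. The Curie--Weiss Ising case is the special case $d=1$; there $g''=-2\beta$ and the argument below applies verbatim.

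\textbf{Step 2 (diagonal entropy Hessian).} Since $q=1$, each block contributes one coordinate with sham variable $y_{k,0}=r_k-y_k$. The entropy part is therefore separable: $e(\yy)=\sum_k\big(y_k\log y_k+(r_k-y_k)\log(r_k-y_k)\big)$. Its Hessian is diagonal,
\[\cE(\yy)=\mathrm{diag}\Big(\frac{1}{y_k}+\frac{1}{r_k-y_k}\Big)_{k=1}^d=\mathrm{diag}\Big(\frac{r_k}{y_k(r_k-y_k)}\Big)_{k=1}^d .\]
For $y_k\in(0,r_k)$ we have $y_k(r_k-y_k)\le r_k^2/4$, with equality exactly at $y_k=r_k/2$. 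So each diagonal entry is at least $4/r_k$, which is the corresponding entry of $\cE(\aaa)$. Hence $\cE(\yy)-\cE(\aaa)$ is diagonal with nonnegative entries, i.e.\ positive semidefinite. This is consistent with Lemma~\ref{lemma_BE=I}, since $\cE(\aaa)=B^{-1}=4\,\mathrm{diag}(\bm r)^{-1}$.

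\textbf{Step 3 (conclusion).} For every $\vv\neq\bz$,
\[\vv^\top\cH(\yy)\vv=\vv^\top\cH(\aaa)\vv+\vv^\top(\cE(\yy)-\cE(\aaa))\vv\ge\vv^\top\cH(\aaa)\vv>0,\]
so $\cH(\yy)$ is positive definite throughout $\cC^\circ$. On the boundary the entropy term blows up, so the statement is only meaningful on the interior, which is where it will be applied.

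The only point requiring care is Step~2: the separability of $e$ into one-dimensional pieces relies on $q=1$, and the diagonal form must be checked in the same coordinates (the $y_k$, not the sham variables) in which $\cG$ is written. Beyond that I do not expect any real obstacle.
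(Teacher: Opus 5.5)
Your proof is correct and follows the paper's argument: $\cG$ is constant, the entropy Hessian $\cE(\yy)=\mathrm{diag}\big(r_k/(y_k(r_k-y_k))\big)$ is minimized entrywise at $\yy=\aaa=\tfrac12\bm r$, and so $\cH(\yy)$ equals $\cH(\aaa)$ plus a positive semidefinite matrix. One small point: Curie--Weiss is not literally the $d=1$ case of the multipartite potential, which vanishes identically for $d=1$, but your argument does apply to it verbatim as a one-block model with $g''=-2\beta$.
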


\begin{proof}
Note first that $\cG(\yy)$ is independent of $\yy$: $\cG(\yy) = \cG(\aaa)$.
Note also that $\cE(\yy)=\mathrm{diag}(r_k/y_k(r_k-y_k))$ and, notably, 
that each diagonal element is minimized when $\yy=\aaa=\tfrac{1}{2}\mathbf{r}$.
Hence $\cH(\yy)$ is the sum of $\cH(\aaa)$ and a positive semidefinite matrix, and is therefore positive definite for every $\yy$ whenever $\cH(\aaa)$ is positive definite.
\end{proof}

In each case, we have identified a critical value $\beta_c$ at which $\mathcal H$ ceases to be positive definite and $h$ develops multiple local minima. Lemma \ref{lemma:H_pd_everywhere}, together with Theorem \ref{thm:projected_block_measure} and Corollary \ref{corollary:block_measure_under_concave_g}, 
therefore shows that $\beta_c$ is also the critical value for the mixing-time transition for $\cY$ as well as $\cZ$.

\smallskip

We summarize these results in the form of two propositions.

\begin{proposition} \label{prop:complete_d-partite_graphs}
    Let $r_1,\ldots,r_d$ be positive numbers whose sum is 1, let $G$ be the complete $d$-partite graph with block sizes $nr_1,\ldots,nr_d$, let $\lambda_*$ be the unique real negative solution of
    \[1-\sum_{k=1}^d\frac{r_k}{r_k-\lambda}=0,\]
    and set $\beta_c(\bm{r}) = -2/\lambda_*$.
    Consider Glauber dynamics $\cZ$ and its corresponding projection $\cY$ for the Ising model on $G$ with inverse temperature $\beta$. Then, if $\beta>\beta_c(\bm{r})$, $\taum(\cZ;\ep) \geq \taum(\cY;\ep) = e^{\Omega(n)}$ for $\ep < 1/2$. 
    If $\beta<\beta_c(\bm{r})$, then for any $0<\ep<1$,
    \[\taum(\cZ;\ep)=\taum(\cY;\ep)=\frac{1}{2+\lambda_*\beta}n\log n + O(n).\]
    In particular: 
    \begin{itemize}
    \item[(i)] If $d=2$, then $\beta_c(\bm{r}) = 2/\sqrt{r_1r_2}$ and for $\beta<\beta_c(\bm{r)}$,
    \[\taum(\cZ;\ep)=\taum(\cY;\ep)=\frac{1}{2-\sqrt{r_1r_2}\beta}n\log n + O(n).\]
    \item[(ii)] If $\bm{r}=\tfrac{1}{d}\one$, then $\beta_c(d):=\beta_c(\bm{r})=2d/(d-1)$ and whenever $\beta<\beta_c(d)$,
    \[\taum(\cZ;\ep)=\taum(\cY;\ep)=\frac{1}{2-\frac{d-1}{d}\beta}n\log n + O(n).\]
    \end{itemize}
\end{proposition}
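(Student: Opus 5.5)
The plan is to verify the hypotheses of Theorem \ref{thm:projected_block_measure} and Corollary \ref{corollary:block_measure_under_concave_g} for the complete $d$-partite Ising model, compute $\lambda_1(B\cH)$, and then specialize. The model is a block measure with $q=1$ and potential $g(\yy)=\beta\sum_k\sum_{\ell\ne k}y_k(r_\ell-y_\ell)$. Its Hessian is constant, $\cG=2\beta(I-\one\one^\top)$, and $\aaa=\tfrac12\bm r$ is stationary by symmetry.

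\textbf{Step 1: locate $\beta_c$.} At $\aaa$ we have $B=\tfrac14\mathrm{diag}(\bm r)$, and by Lemma \ref{lemma_BE=I} $\cE=B^{-1}$. Hence $\cH(\aaa)=B^{-1}+\cG$, which is similar, via $B^{1/2}$, to $B^{-1/2}(I+B\cG)B^{1/2}$. So $\cH(\aaa)$ is positive definite iff $1+\lambda_1(B\cG)>0$. Using $B\cG=\tfrac{\beta}{2}(\mathrm{diag}(\bm r)-\bm r\one^\top)$ and the characteristic polynomial computed above, $\lambda_1(B\cG)=\beta\lambda_*/2$. This requires the smallest eigenvalue of $\mathrm{diag}(\bm r)-\bm r\one^\top$ to be the unique negative root $\lambda_*$. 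That holds because the matrix is similar to the symmetric matrix $\mathrm{diag}(\bm r)-\bm r^{1/2}(\bm r^{1/2})^\top$, a rank-one negative perturbation of a positive diagonal matrix, so exactly one eigenvalue lies below $\min_k r_k$. On $(-\infty,0)$ the function $\sum_k r_k/(r_k-\lambda)$ increases from $0$ to $d\ge 2$, so it crosses $1$ exactly once. The remaining eigenvalues are positive, interlacing with the $r_k$. Thus $\cH(\aaa)\succ0$ iff $\beta<-2/\lambda_*=\beta_c$.

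\textbf{Step 2: the high-temperature regime $\beta<\beta_c$.} By Lemma \ref{lemma:H_pd_everywhere}, $h$ is strictly convex on $\cC^\circ$. Since the entropy gradient blows up at the boundary, $\aaa$ is the unique stationary point and global minimizer. Compactness then shows that $\|\nabla h\|$ is bounded away from $0$ off every $B(\aaa,\kappa)$. Theorem \ref{thm:projected_block_measure} gives $\taum(\cY;\ep)=\frac{1}{2(1+\beta\lambda_*/2)}n\log n+O(n)=\frac{1}{2+\lambda_*\beta}n\log n+O(n)$. Next, $\lambda_1(\cG)\le0$ because $\cG$ has eigenvalue $2\beta(1-d)<0$, so Corollary \ref{corollary:block_measure_under_concave_g} lifts the same asymptotics to $\cZ$.

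\textbf{Step 3: the low-temperature regime $\beta>\beta_c$, which I expect to be the main obstacle.} There $\aaa$ is a saddle, and I must produce two distinct nondegenerate interior local minima so that Theorem \ref{thm:projected_block_measure}, i.e.\ Theorem \ref{thm:exponential_mixing_time}, gives exponential mixing. I would use the spin-flip symmetry $\yy\mapsto\bm r-\yy$, which preserves $h$. Along the direction $\vv$ of the negative eigenvector of $\cH(\aaa)$, $h$ decreases near $\aaa$. Since $h$ is continuous on compact $\cC$ with boundary repulsion, a global minimizer $\yy^*\ne\aaa$ exists in the interior, and $\bm r-\yy^*$ is a second one. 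It remains to check nondegeneracy. I would argue this generically, or more concretely by restricting to the ordered region $\{\yy:y_k-r_k/2 \text{ of a fixed sign pattern}\}$, where $\cE(\yy)$ grows as $\yy$ leaves $\aaa$. If nondegeneracy proves delicate, I would note that Theorem \ref{thm:exponential_mixing_time} only needs local Lyapunov functions. Any strict isolated local minimum with a local quadratic lower bound suffices, and this can be obtained from analyticity of $h$. The bound $\taum(\cZ;\ep)\ge\taum(\cY;\ep)$ holds because $\cY$ is a projection of $\cZ$.

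\textbf{Step 4: specializations.} For $d=2$, solving $1=\frac{r_1}{r_1-\lambda}+\frac{r_2}{r_2-\lambda}$ gives $\lambda^2=r_1r_2$, so $\lambda_*=-\sqrt{r_1r_2}$ and $\beta_c=2/\sqrt{r_1r_2}$. For $r_k=1/d$, the equation $1=d\cdot\frac{1/d}{1/d-\lambda}$ gives $\lambda_*=-(d-1)/d$ and $\beta_c=2d/(d-1)$. Substituting these into Step 2 yields (i) and (ii).
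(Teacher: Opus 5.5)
Your Steps 1, 2 and 4 follow the paper's argument. You compute $B=\tfrac14\mathrm{diag}(\bm r)$ and $\cG=2\beta(I-\one\one^\top)$, read off $\lambda_1(B\cG)=\beta\lambda_*/2$ from the secular equation, and combine Lemma~\ref{lemma:H_pd_everywhere}, Theorem~\ref{thm:projected_block_measure} and Corollary~\ref{corollary:block_measure_under_concave_g}. Your symmetrization-and-interlacing argument that $\lambda_*$ really is the smallest eigenvalue is tidier than the paper's root-counting remark. One slip: $\cH(\aaa)$ is congruent, not similar, to $I+B^{1/2}\cG B^{1/2}$, since $B^{1/2}\cH(\aaa)B^{1/2}=I+B^{1/2}\cG B^{1/2}$. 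Sylvester's law of inertia still gives the positive-definiteness equivalence you need.

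The gap is in Step 3, where you expected it. The paper itself only asserts that $h$ has two nondegenerate minima for $\beta>\beta_c$, and none of your suggestions establishes the nondegeneracy.
\begin{itemize}
\item Monotonicity of $\cE$ only gives $\cH(\yy^*)\succeq\cH(\aaa)$, and $\cH(\aaa)$ has a negative eigenvalue.
\item ``Generically'' does not cover every $\beta>\beta_c$.
\item The analyticity fallback is circular. For a $C^2$ function, a local quadratic lower bound at a critical point is equivalent to a positive definite Hessian, and analyticity does not provide it (think of $x^4$).
\end{itemize}
Nor can Theorem~\ref{thm:exponential_mixing_time} be rescued at a degenerate minimizer by a cleverer $V$. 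There $\nabla\ff(\yy^*)=2B\cH(\yy^*)$ is singular, so for $\uu\in\ker\cH(\yy^*)$ one has $\nabla V(\yy^*+t\uu)^\top\ff(\yy^*+t\uu)=O(t^3)$, while $V(\yy^*+t\uu)\ge qt^2$.

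One way to close the gap is to pass to block magnetizations $m_k=2y_k/r_k-1$ and $S=\sum_k r_km_k$. Then
\[h=\mathrm{const}+\sum_k r_kI(m_k)-\tfrac{\beta}{4}\bigl(S^2-\sum_k r_k^2m_k^2\bigr),\qquad I(m)=\tfrac{1+m}{2}\log(1+m)+\tfrac{1-m}{2}\log(1-m),\]
so $I'(m)=\operatorname{artanh}(m)$. Stationary points therefore satisfy $F_k(m_k):=\operatorname{artanh}(m_k)+\tfrac{\beta}{2}r_km_k=\tfrac{\beta}{2}S$, which reduces to the scalar equation $S=\Psi(S):=\sum_k r_kF_k^{-1}(\beta S/2)$.

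Each $F_k$ is an odd increasing bijection $(-1,1)\to\RR$ that is strictly convex on $[0,1)$. Hence $\Psi$ is odd, bounded and strictly concave on $(0,\infty)$. Moreover $\Psi'(0)=\sum_k r_k/(r_k+2/\beta)>1$ iff $\beta>\beta_c$. It follows that for $\beta>\beta_c$ there is exactly one positive fixed point $S^*$, and $\Psi'(S^*)<1$.

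In these coordinates the Hessian is $\mathrm{diag}\bigl(r_kF_k'(m_k)\bigr)-\tfrac{\beta}{2}\bm r\bm r^\top$. By the same rank-one criterion as in your Step 1, it is positive definite iff $\tfrac{\beta}{2}\sum_k r_k/F_k'(m_k)=\Psi'(S^*)<1$. This gives nondegeneracy, and also shows that the two symmetric minimizers are the only local minima.

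Alternatively, you can avoid nondegeneracy entirely. On $\{S=0\}$ one has $h\ge h(\aaa)>\min h$, and $S$ changes by at most $2/n$ per step. A bottleneck-ratio bound for the set $\{S>0\}$ then gives $\taum(\cY;\ep)\ge e^{\Omega(n)}$ directly.
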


\begin{proposition} \label{prop:complete_Gb-partite_graphs}
Let $G_b$ be a $\delta$-regular $d$-vertex graph and let $G$ be the complete $n$-vertex $G_b$-partite graph with equal blocks.
Let $\cZ$ be Glauber dynamics for the Ising model with inverse temperature $\beta$ on $G$ and let $\cY$ be its projection.
Then, if $\beta>2d/\delta$, the mixing time of both processes is $e^{\Omega(n)}$ for any $\ep<1/2$. 
If $\beta<2d/\delta$, then for any $0<\ep<1$,
\[\taum(\cZ;\ep)=\taum(\cY;\ep)=\frac{1}{2-\frac{\delta}{d}\beta}n\log n + O(n).\]
    
\end{proposition}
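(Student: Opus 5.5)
The plan is to treat Proposition \ref{prop:complete_Gb-partite_graphs} as a direct application of Theorem \ref{thm:projected_block_measure} and Corollary \ref{corollary:block_measure_under_concave_g}, with $d$ blocks of proportion $r_k=1/d$, $q=1$ and potential
\[
g(\yy)=\beta\sum_{k}\sum_{\ell:\{k,\ell\}\in E_b}y_k\Bigl(\frac1d-y_\ell\Bigr).
\]
All the spectral work is the computation already carried out above. At $\aaa=\tfrac{1}{2d}\one$ one has $B=\tfrac{1}{4d}I$ and $\cG=-2\beta A_b$, where $A_b$ is the adjacency matrix of $G_b$. Hence
\[
B\cG=-\frac{\beta}{2d}A_b .
\]
Since $G_b$ is $\delta$-regular, the spectrum of $A_b$ lies in $[-\delta,\delta]$ and contains $\delta$. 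The top eigenvalue $\delta$ of $A_b$ is what gives $\lambda_1(B\cG)=-\beta\delta/2d$. The mixing constant is therefore
\[
2(1+\lambda_1(B\cG))=2-\frac{\delta\beta}{d}.
\]
Similarly, $\lambda_1(\cH(\aaa))=4d-2\beta\delta$, because $\cE(\aaa)=4dI$. So $\cH(\aaa)$ is positive definite exactly when $\beta<2d/\delta$, which identifies $\beta_c=2d/\delta$.

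For $\beta<2d/\delta$ I will verify the hypotheses of Theorem \ref{thm:projected_block_measure} in three steps.

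\begin{itemize}
\item By Lemma \ref{lemma:H_pd_everywhere}, $\cH(\yy)=\cG+\cE(\yy)$ with $\cE(\yy)\succeq\cE(\aaa)$, so $\cH(\yy)$ is positive definite everywhere on $\cC^\circ$ and $h$ is strictly convex there.
\item Since $\aaa$ is a stationary point by symmetry, it is the unique minimizer and the unique stationary point.
\item $\n\nabla h\n$ is bounded away from $0$ outside $B(\aaa,\kappa)$. Strict convexity with a uniform lower bound on the Hessian gives $\n\nabla h(\yy)\n\ge \lambda_1(\cH(\aaa))\,\n\yy-\aaa\n$. Alternatively, one can use that the entropy gradient blows up at the boundary together with compactness.
\end{itemize}

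This yields the $\cY$ statement. Next, $\cG=-2\beta A_b$ has smallest eigenvalue $-2\beta\delta\le 0$ (for $\beta\ge0$), so Corollary \ref{corollary:block_measure_under_concave_g} applies and lifts the result to $\cZ$ with the same constant. This gives the displayed formula
\[
\taum(\cZ;\ep)=\taum(\cY;\ep)=\frac{1}{2-\frac{\delta}{d}\beta}\,n\log n+O(n).
\]

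For $\beta>2d/\delta$ I need two distinct nondegenerate interior local minimizers, after which the second part of Theorem \ref{thm:projected_block_measure} gives $e^{\Omega(n)}$ for both processes.
\begin{itemize}
\item Restrict $h$ to the line $\yy=\aaa+s\vv$, where $\vv=\one$ is the Perron eigenvector of $A_b$; along it the problem reduces to a Curie--Weiss-type one-dimensional function. There $h''(\aaa)$ along $\vv$ is negative, so $\aaa$ is a saddle or local maximum. Since $h$ is continuous on the compact $\cC$ with gradient pointing inward near $\partial\cC$, its minimum is attained in $\cC^\circ$ away from $\aaa$.
\item The spin-flip symmetry $\yy\mapsto \tfrac1d\one-\yy$ preserves $h$ and maps any such minimizer to a distinct one, which gives two minimizers.
\end{itemize}

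The main obstacle is \emph{nondegeneracy} of these minimizers, which Theorem \ref{thm:exponential_mixing_time} needs in order to build local Lyapunov functions. My first try is to note that a genuine local minimizer which is degenerate only on a measure-zero set of $\beta$ values can be handled by a perturbation argument. More cleanly, I would show the minimizers lie on the symmetric line $\yy=t\one$. There $h$ reduces to $d$ times the Curie--Weiss free energy with effective inverse temperature $\delta\beta/d$, whose nonzero minimizers are known to be nondegenerate, with transverse Hessian directions controlled by $\cE\succ 4dI$ and the eigenvalues of $A_b$ below $\delta$.

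If that reduction fails, it suffices to use the paper's weaker route of Theorem \ref{thm:exponential_mixing_time}. That only needs local Lyapunov functions near two distinct attracting zeros. These can be obtained from Lemma \ref{lemma:proj_block_measure_lyapunov_on_compact_interiors} at any strict local minimizers with positive-definite Hessian, and on the symmetric line those Hessians are explicitly computable.
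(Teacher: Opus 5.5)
Your proposal is correct and follows essentially the paper's route. The paper uses the same computation $B\cG=-\tfrac{\beta}{2d}A_b$ at $\aaa=\tfrac{1}{2d}\one$, Lemma \ref{lemma:H_pd_everywhere} for global positive definiteness of $\cH$ when $\beta<2d/\delta$, and then Theorem \ref{thm:projected_block_measure} with Corollary \ref{corollary:block_measure_under_concave_g}.

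For $\beta>2d/\delta$ the paper only asserts that $h$ develops multiple local minima, and your symmetric-line reduction to Curie--Weiss at inverse temperature $\delta\beta/d$ is a valid way to supply this. It is cleanest without the perturbation idea, and without claiming that all minimizers lie on the line, which fails for disconnected $G_b$. Regularity gives $\nabla h(t\one)\parallel\one$, so the two Curie--Weiss minimizers $t^*\one$ and $(\tfrac1d-t^*)\one$ are critical points of $h$. Since $\cE(t^*\one)$ is scalar, the smallest eigenvalue of $\cH(t^*\one)$ is attained along the top eigenvector $\one$ of $A_b$. There it equals the positive Curie--Weiss curvature, and this, rather than $\cE\succ 4dI$, is what gives nondegeneracy.
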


For the Ising model on complete multipartite graphs, cutoff in the high-temperature regime and exponentially slow mixing in the low-temperature regime were established by Kim \cite{Kim2022}. Proposition \ref{prop:complete_d-partite_graphs} recovers the cutoff location through the smallest eigenvalue of the corresponding block matrix.
I am not aware of a previous statement of the extension in Proposition \ref{prop:complete_Gb-partite_graphs} to interaction structures given by general regular base graphs.

\subsubsection{Latent Dirichlet Allocation and hierarchical models} \label{subsubsec:LDA}

The Latent Dirichlet Allocation (LDA) model of Blei, Ng and Jordan \cite{BNJ} is a topic model of iconic status in natural language processing.
LDA and several of its variants remain useful in the humanities and social sciences more than twenty years after its introduction.
LDA is a generative model designed for unsupervised inference of latent themes, or {\em topics}, 
in large corpora of text.
It represents each document as a mixture over $K$ latent topics, each of which is characterized
by a distribution over vocabulary words.

More precisely, consider a corpus of $D$ documents, $d=1,\ldots,D$, each consisting of $L_d$ tokens: $w_{d1},w_{d2},\ldots,w_{dL_d}$,
where each token is a word from a vocabulary of $V$ words: $1,2,\ldots,V$.
Assume that there exist $K$ different topics that are represented in the corpus.
Each topic, $k$, is represented by a probability vector $\bphi_k=(\phi_{k1},\ldots,\phi_{kV})$ over the vocabulary, 
and each document $d$ is represented by a probability 
vector $\btheta_d=(\theta_{d1},\ldots,\theta_{dK})$ over the set of topics.
The corpus is assumed to have been generated as follows: Independently for each $d$ and $j$, a topic $z_{dj}$ is first sampled according to $\theta_d$, after which $w_{dj}$ is sampled from the vocabulary according to $\bphi_{z_{dj}}$.

Hence, for each word in the entire corpus, there is a latent topic label ``behind'' it, and the goal is to infer the topic labels from the text.
Inference here means to sample from the conditional distribution of 
$\zz:=(z_{11},\ldots,z_{DL_D})$ given $\ww:=(w_{11},\ldots,w_{DL_D})$.
If $\Phi=(\bphi_1,\ldots,\bphi_K)$ and $\Theta:=(\btheta_1,\ldots,\btheta_D)$ are known, this is trivial; each topic label is conditionally independent and
\[P(z_{dj}=k|\ww) \propto \theta_{dk}\phi_{kw_{dj}}.\]
However, at least one of $\Phi$ and $\Theta$, and often both, is usually unknown.
In that case, LDA uses Bayesian priors on the unknown parameters.
Fix two vectors: $\balpha \in (0,\iy)^K$ and $\bbeta \in (0,\iy)^V$.
Then in the case of unknown $\Theta$, let each $\theta_d \sim Dir(\balpha)$, where $Dir$ stands for the Dirichlet distribution, independently over $d$, and if
$\Phi$ is unknown, let $\phi_k \sim Dir(\bbeta)$ independently over $k$.
In this way, the LDA framework encompasses several related hierarchical Bayesian models, depending on which of the latent quantities $\zz,\Phi,\Theta$ are treated as random. 
In the sequel, we shall consider:

\begin{itemize}
\item Multinomial mixtures with known class distributions, corresponding to $D=1$ with known $\Phi$.
\item LDA with either $\Theta$ or $\Phi$ known.
\item Full LDA where both $\Theta$ and $\Phi$ are unknown.
\end{itemize}

In each case, we will consider the mixing behavior of Gibbs sampling from the posterior of the topic/class labels, 
which in the first two cases means partially collapsed Gibbs sampling and in the third case collapsed Gibbs sampling 
(where collapsing means integrating out unknown quantities when computing the posterior).
These models form a natural hierarchy of increasing complexity and will allow us to isolate the mechanisms responsible for fast and slow mixing.

\smallskip

To compute the posterior of $\zz$ given $\ww$, for each $(d,v,k) \in [D] \times [V] \times [K]$, 
let $\kappa_{d,v,k}=\kappa_{d,v,k}(\zz)$ be the number of $j \in \{1,\ldots,L_d\}$ such that $z_{dj}=k$ and $w_{dj}=v$.
Use dot-notation for quantities where one or more indices have been summed out, i.e., $\kappa_{\bullet, v,k}=\sum_d \kappa_{d,v,k}$ and analogously with the dot or dots in other index positions.
Let
\begin{itemize}
    \item $N_{dk}=\kappa_{d,\bullet,k}$, the number of tokens in document $d$ assigned topic $k$,
    \item $M_{kv}=\kappa_{\bullet,v,k}$, the total number of tokens in the corpus with word $v$ assigned topic $k$,
    \item $M_k=\kappa_{\bullet,\bullet,k}$, the total number of tokens in the corpus assigned topic $k$.
\end{itemize}
Letting $q$ be the pmf for $(\zz,\ww)$, we get
\[q(\zz,\ww \mid \Theta,\Phi) = \prod_{d=1}^D \prod_{k=1}^K \theta_{dk}^{N_{dk}} \cdot \prod_{k=1}^K\prod_{v=1}^V \phi_{kv}^{M_{kv}}.\]
Hence 

\begin{equation} \label{eq:lda_posterior}
q(\zz \mid \ww) \propto q(\zz,\ww) = \prod_{d=1}^D \E\left[\prod_{k=1}^K \theta_{dk}^{N_{dk}}\right] \cdot \prod_{k=1}^K \E\left[\prod_{v=1}^V \phi_{kv}^{M_{kv}}\right]
\end{equation}
where the expectations are trivial if the corresponding quantity is known and higher moments of Dirichlet random probability vectors if unknown (or of course, albeit usually less tractable, the corresponding higher moments for any other desired prior distribution). 
One must then recall that for $\bX \sim Dir(\gamma_1,\ldots,\gamma_I)$,
\[\E\left[\prod_{i=1}^I X_i^{J_i} \right] =  C\frac{\prod_{i=1}^I\Gamma(\gamma_i+J_i)}{\Gamma\left(\sum_{i=1}^I \gamma_i+J_i\right)}\]
for a factor $C$ independent of the $J_i$'s.
We will be working in the regime $\gamma_i=n\delta_i$, with $n$ the total number of tokens in the corpus, with $V$ and $D$ fixed and the $L_d$'s growing linearly with $n$.
By Stirling's approximation and writing $J_i=nj_i$, this now becomes
\[\E\left[\prod_{i=1}^I X_i^{J_i} \right] \approx C\left(\frac{\prod_{i=1}^I (\delta_i+j_i)^{\delta_i+j_i}}{\left(\sum_{i=1}^I \delta_i+j_i \right)^{\sum_{i=1}^I \delta_i+j_i}}\right)^n.\]
This approximation is not sufficiently sharp to leave the posterior unchanged when substituted into its expression.
Indeed, approximating $\Gamma(an)$ by $(an/e)^{an}$ omits the factor 
\[\sqrt{\frac{2\pi}{an}}\left(1+\frac{1}{12an}+O\left(\frac{1}{n^2}\right)\right),\]
potentially perturbing the ratio of the probabilities of widely separated configurations by a constant factor.
However, single‑site Gibbs transition probabilities are only altered by a relative factor $1+O(1/n)$, uniformly over configurations. Such perturbations are immaterial for the mixing‑time analysis; they correspond to an $O(1/n^2)$ modification of the drift and noise terms and are therefore absorbed in the MFGOU framework (see (\ref{eq:trans_prob_in_terms_of_h}) and the definition of the basic MFGOU process). Consequently, although the stationary distribution is slightly altered, the resulting Markov chain has the same mixing‑time asymptotics.
We will therefore use the Stirling expression to identify the MFGOU drift, with all omitted terms absorbed into the $O(n^{-2})$ drift remainder.

\begin{remark} \label{remark:prior_parameters}
    {\rm (Choice of hyperparameters.) In practical applications of LDA, the Dirichlet hyperparameters $\balpha,\bbeta$ are typically fixed and do not scale with corpus size. 
    In contrast, for the purposes of our mixing-time analysis it is natural to consider the linear-growth regime 
    \[\alpha_k=n\delta_k,\; \beta_v=n\omega_v.\] 
    This is precisely the scaling under which the posterior log-density, after Stirling approximation, acquires a clean mean-field structure of the form $e^{-ng(\yy)}$ with $g$ differentiable on its domain, 
    placing the resulting Gibbs sampler in the MFGOU framework,
    whereas in the fixed hyperparameter setting, differentiability on the entire closed domain fails.
    The main complication is that if started from a boundary state, Gibbs sampling may be slow to enter the interior, while in many cases, starting from an interior point may still allow the MFGOU framework to be applied on a censored domain. 
    When applicable, this will be remarked upon in the applications below.} 
    
\end{remark}

\vspace{-0.4mm}
\paragraph{Multinomial mixture models.}

We begin with the case $D=1$ with known class distributions $\Phi=(\bphi_k)$. Each token is generated by first choosing a class $k \sim \btheta$ and then drawing a word according to $\bphi_k$. 
The class weights $\btheta$ are unknown and endowed with a Dirichlet prior $Dir(\balpha)$.
After applying the Stirling approximation to the Dirichlet moments of $\btheta$, writing
\[y_{v,k}=M_{kv}/n\]
and recalling that $\bdelta=\balpha/n$,
the approximate posterior takes the mean-field block-measure form
\[\nu(\zz) \propto \exp\{-n\,g(\yy)\},\]
where
\begin{align*}
g(\yy) &= -\left(\sum_{k=1}^K (\delta_k+y_{\bullet,k})\log(\delta_k+y_{\bullet,k}) + \sum_{k=1}^K\sum_{v=1}^V y_{v,k}\log\phi_{kv}\right).
\end{align*}
The denominator of the Stirling approximation has been omitted, since it equals $(\ell_1+\sum_k\delta_k)\log (\ell_1+\sum_k\delta_k) = (1+\sum_k\delta_k)\log (1+\sum_k\delta_k)$, and is thus independent of $\yy$.

\smallskip

To apply the block-measure machinery of this section, we analyze the free energy
\[h(\yy) = e(\yy)+g(\yy) = \sum_{k=1}^K\sum_{v=1}^V y_{v,k}\log y_{v,k} + g(\yy).\]
Under the assumption that there is a unique minimizer $\aaa$ of $h$ and that $\n\nabla h \n$ is bounded away from $0$ on each $B(\aaa,\kappa)$, $\kappa>0$, Theorem \ref{thm:projected_block_measure} gives
\[\taum(\cY;\ep) = \mu n\log n + O(n),\]
where $\mu^{-1}=2(1+\lambda_1(B\cG))$,
$B=\mathrm{blockdiag}(B_1,\ldots,B_V)$ and $B_v=\mathrm{diag}(\aaa_v)-\tfrac{1}{r_v}\aaa_v\aaa_v^\top$, where $r_v$ is the proportion of tokens in the document whose observed word is $v$. (Note that $k$ here plays the role of $\sigma$ and $v$ plays the role of $k$ in the general block-measure setup.)
Recall that $\yy$ is $(K-1)V$-dimensional, since 
\[y_{v,K}=r_v-\sum_{k=1}^{K-1} y_{v,k},\]
in accordance with the block-measure setup.

Differentiating $g$ twice and keeping the $y_{v,K}$ restriction in mind, we get for $v,w \in \{1,\ldots,V\}$, $k,\ell \in \{1,\ldots,K-1\}$,
\[\frac{\partial^2}{\partial_{(v,k)}\partial_{(w,\ell)}}g(\yy) = -\frac{1}{S_K}-\frac{1}{S_k}\one_{\{k=\ell\}},\]
where $S_k=\delta_k+y_{\bullet,k}$.
In particular, $\cG$ is negative semidefinite, and 
consequently, Corollary \ref{corollary:block_measure_under_concave_g} shows that the same mixing-time result holds for the unprojected Gibbs sampler $\cZ$.
\[\taum(\cZ;\ep) = \frac{1}{2(1+\lambda_1(B\cG))}n\log n + O(n).\]
It remains to investigate when $h$ satisfies the unimodality conditions. We claim that in fact, $\nabla^2 h(\yy)$ is {\em always uniformly positive definite in $\yy$}, so that the given mixing time result holds for all class distributions $\Phi$ and all hyperparameters $\delta_k>0$.
To that end, write
\[h(\yy) = \sum_{k=1}^K \left( \sum_{v=1}^V y_{v,k}\log y_{v,k} - \left(\delta_k+\sum_{v=1}^V y_{v,k}\right)
\log\left(\delta_k+\sum_{v=1}^V y_{v,k}\right) \right).\]
Regarding $h$ as a function of $KV$ variables restricted to $\one_K^\top\yy_{v,:}=r_v$ for all $v$, the smallest eigenvalue of the restricted Hessian is the minimum over quadratic forms $\ww^\top \hat{\cH}(\yy) \ww$ for the unrestricted Hessian $\hat{\cH}$ over
$\ww=(\ww_{1,:},\ldots,\ww_{V,:}) = (\ww_{:,1},\ldots,\ww_{:,K})$ under $\one_K^\top \ww_{v,:}=0$ and $\n\ww\n=1$.

Pick one of the inner $K$ terms arbitrarily and write for simplicity $\delta=\delta_k$, $\xx=y_{:,k}$, $\uu=\ww_{:,k}$ and $\psi(\xx)$
for the chosen term.
Then
\[\nabla^2 \psi(\xx) = \mathrm{diag}\left(\frac{1}{\xx}\right)-\frac{1}{\delta+s}\one\one^\top,\]
where $s=\sum_{v=1}^V x_v$.
Then for any $\uu \in \RR^V$,
\begin{align*}
\uu^\top (\nabla^2 \psi(\xx)) \uu &= \sum_v \frac{1}{x_v}u_v^2 - \frac{1}{\delta+s}\left(\sum_v u_v\right)^2.
\end{align*}
By Cauchy-Schwarz,
\[\sum_{v=1}^V\frac{u_v^2}{x_v} \geq \frac{1}{s}\left(\sum_{v=1}^V |u_v|\right)^2
= \frac{1}{s}\|\uu\|_1^2,\]
with equality if and only if $\uu$ is proportional to $\xx$. 
Moreover,
\[\left(\sum_{v=1}^V u_v\right)^2\leq \|\uu\|_1^2,\]
with equality if and only if all nonzero components of $\uu$ have the same sign. 
Consequently,
\[\uu^\top \nabla^2 \psi(\xx)\uu \geq \left(\frac{1}{s}-\frac{1}{\delta+s}\right)\|\uu\|_1^2
=\frac{\delta}{s(\delta+s)}\|\uu\|_1^2.\]
Equality holds if and only if $\uu$ is proportional to $\xx$.
Summing over $K$ gives
\[\ww^\top \hat \cH(\yy)\ww \geq \sum_{k=1}^K \frac{\delta_k}{s_k(\delta_k+s_k)} \|\ww_{:,k}\|_1^2 \geq \left(\min_{1\leq k\leq K}\frac{\delta_k}{1+\delta_k}\right) \sum_{k=1}^K\|\ww_{:,k}\|_2^2. \] 
Thus $\hat \cH(\yy)$ is uniformly positive definite. Its restriction to the subspace defined by 
\[\sum_{k=1}^Kw_{v,k}=0,\,\, v=1,\ldots,V\] 
is therefore also uniformly positive definite.


\smallskip



Hence, the given mixing time bounds always hold, whenever $\min_k\delta_k>0$.
In general, $\aaa$ and $B\cH:=B(\aaa)\cH(\aaa)$ are intractable, but there are natural cases that are more amenable. In particular, the formulas simplify when $K=2$, which we assume henceforth. 
Then, with $\yy:=\yy_{:,1}$, we get $\yy_{:,2}=\bm{r}-\yy$. Also take $\delta_1=\delta_2=\delta$.
With $\aaa$ being the minimizer of $h$, we then have 
\[B=\mathrm{diag}\left(a_1-\frac{a_1^2}{r_1},\ldots,a_V-\frac{a_V^2}{r_V}\right),\]
\[\cG= -\left(\frac{1}{\delta+\sum_v a_v}+\frac{1}{1+\delta-\sum_v a_v}\right)\one\one^\top.\]
This gives
\[\lambda_1(B\cG) = - \left(\frac{1}{\delta+s}+\frac{1}{1+\delta-s}\right)\left(s-\sum_v\frac{a_v^2}{r_v}\right),\]
where $s=\sum_v a_v$.
Since $\aaa$ is in the interior, $\lambda_1(B\cG) < 0$.
Moreover,
\[\lambda_1(B\cG) \geq -\frac{(1+2\delta)s(1-s)}{(\delta+s)(1+\delta-s)},\] 
with equality iff $\aaa=s\bm{r}$.
In summary, if $K=2$ and $h$ is minimized at $\aaa$, then up to $O(n)$ terms, with $s=\sum_v a_v$,
\[\taum(\cZ;\ep)=\taum(\cY;\ep) = \mu n\log n,\]
with
\[\mu^{-1} = 2\left(1-\left(\frac{1}{\delta+s}+\frac{1}{1+\delta-s}\right)\left(s-\sum_v\frac{a_v^2}{r_v}\right)\right).\]
It is still difficult to find $\aaa$, but consider the case where the mixture model is well-specified, i.e. there is a true underlying mixture measure $\theta^*$.
In this case, under the generative process,
\[\E[r_v]=\sum_k \theta^*_k\phi_{kv} = \theta^*\phi_{1v}+(1-\theta^*)\phi_{2v},\]
and the law of large numbers assures that the actual $r_v$'s are, with high probability, very close to their expectations for large $n$.
This makes it compelling to consider the ideal setting in which
\[r_v=\sum_k \theta^*_k\phi_{kv} = \theta^*\phi_{1v}+(1-\theta^*)\phi_{2v}.\]
Assuming that $\bphi_1$ and $\bphi_2$ are not identical, the only population allocation corresponding to the generating mixture is $y_v=y^*_v=\theta^*\phi_{1v}$.
For $\delta=0$, one confirms by inserting into the expression for $\nabla h$ that $\yy^*$ is the unique minimizer of $h$.
As $\delta$ increases from $0$, the minimizer moves away continuously from this point at rate $O(\delta)$. This makes it useful to investigate $\lambda_1(B(\yy^*)\cG(\yy^*))$ not only for $\delta=0$.
The given form of $\yy^*$ gives $s=\theta^*$ and after some manipulation, 
\begin{align*} 
\mu^{-1}(\delta) &:= 2\bigl(1+\lambda_1(B(y^*)G(y^*))\bigr) \\
&=\frac{2\left(\delta\bigl(\delta+1-2\theta^*(1-\theta^*)\bigr)+(1+2\delta)\bigl(\theta^*(1-\theta^*)\bigr)^2\sum_{v=1}^V\frac{(\phi_{1v}-\phi_{2v})^2}{r_v}\right)}{(\delta+\theta^*)(\delta+1-\theta^*)}.
\end{align*}
Since $h$ is $C^\iy$ on a neighborhood of $\yy^*$ and its Hessian is strictly positive definite there for $\delta=0$, the minimizer $\aaa(\delta)$ is Lipschitz in $\delta$ for $\delta$ small. It follows that up to an error of $O(n)$,
\[\taum(\cZ;\ep) = \taum(\cY;\ep) = (1+O(\delta))\mu(\delta)n\log n.\]
The second term of $\mu^{-1}(\delta)$ is obviously nonnegative (and zero iff $\bphi_1 = \bphi_2$, which we assumed is not the case).
If we set $\delta=0$, the first term vanishes, but $\mu^{-1}$ is still nonzero and the minimum of $h$ is unique.
Since the preceding argument shows that $\nabla h$ does not vanish on compact interior sets, $\cY$ satisfies the convex hull property, even though PID fails. 
This allows us to censor accordingly and consider the process $\hat{\cY}$ on any compact $K \subseteq \cC^\circ$. 
For $\hat{\cY}$ and the corresponding $\hat{\cZ}$ we get exactly 
\[\taum(\hat{\cZ};\ep) = \taum(\hat{\cY};\ep)+O(n) = \mu(0) n\log n+O(n).\]
Consequently, for either $\cZ$ or $\cY$ started from any point in $K$, the same mixing-time asymptotics hold. 
Only starting points at distance $o(1)$ from the boundary of $\cC$ may introduce an additional delay.

Finally, a few words on the case $\bphi_1=\bphi_2$. In this case there is no way to separate the two topics, i.e., \ there is no way to identify the unknown mixing probability $\theta^*$,
and $h$ is minimized for $\yy=s\bm{r}$ for all $s \in [0,1]$. Hence $h$ is flat on a line in its domain. 
This gives us strong reason to believe that 
the mixing time of $\cZ$ and $\cY$ in this case is $\Theta(n^2)$.

\begin{remark} \label{remark:continuous_mixture}
    {\rm If the multinomial probabilities arise from binning continuous mixture densities $\psi_1,\psi_2$, then as the bins are refined, the curvature term 
    \[\sum_{v=1}^V \frac{(\phi_{1v}-\phi_{2v})^2}{r_v}\]
    converges to 
    \[\int \frac{(\psi_1(v)-\psi_2(v))^2}{\theta^* \psi_1(v)+(1-\theta^*)\psi_2(v)}dv,\]
    provided that the densities are regular.
 The same mixing‑time formula then applies in the continuous case by replacing
 the sum with the integral.
    }
\end{remark}

\vspace{-0.4cm}
\paragraph{LDA with known topic-word distributions.}

The extension from $D=1$ to $D>1$ is straightforward, since $g$ is obtained by summing one term of the same form over the documents.
We have, with document lengths $n\ell_1,\ldots,n\ell_D$, $\sum_d\ell_d=1$,
\[g(\yy) = -\sum_{d=1}^D \left(\sum_{k=1}^K (\delta_k+y_{d,\bullet,k})\log(\delta_k+y_{d,\bullet,k}) + \sum_{k=1}^K\sum_{v=1}^V y_{d,v,k}\log\phi_{kv}\right),\]
\[e(\yy) = \sum_d\sum_v\sum_k y_{d,v,k}\log y_{d,v,k}.\]
Importantly, terms corresponding to different documents do not share variables. Hence the argument from the preceding subsection applies separately to each document and gives 
\[ \lambda_1(\mathcal H(\mathbf y)) \geq \min_{1\leq k\leq K}\frac{\delta_k}{1+\delta_k}>0. \] 
The Hessian $\cG$ is block diagonal across documents, and each block has the same negative-semidefinite form as in the multinomial-mixture model above. 
Thus $\cG$ is negative semidefinite, and Corollary \ref{corollary:block_measure_under_concave_g} applies.
Consequently, the mixing times of $\cY$ and $\cZ$ are both 
\[\mu n\log n+O(n)\] 
whenever $\min_k\delta_k>0$, 
where 
\[ \mu^{-1} = 2\left(1+\lambda_1(B(\mathbf a)\cG(\mathbf a))\right). \]


For $K=2$ with all $\delta_k$ equal, 
\[B=\mathrm{diag}\left(a_{1,1}-\frac{a_{1,1}^2}{r_{1,1}},\ldots,a_{D,V}-\frac{a_{D,V}^2}{r_{D,V}}\right)\]
and $\cG$ is block-diagonal, where block $d$ equals
\[\cG_d = -\left(\frac{1}{\delta+s_d}+\frac{1}{\delta+\ell_d-s_d}\right)\one_V\one_V^\top,\]
where $s_d=\sum_v a_{d,v}$.
This gives
\[\lambda_1(B\cG) = - \max_d\left(\frac{1}{\delta+s_d}+\frac{1}{\delta+\ell_d-s_d}\right)\left(s_d-\sum_v\frac{a_{d,v}^2}{r_{d,v}}\right),\]
and then, as usual $\mu^{-1}=2(1+\lambda_1(B\cG))$.

Returning to general $K$, consider the case in which each document has a true topic-probability vector $\btheta^*_d$. 
For $\delta=0$ and under an appropriate identifiability condition on the $\bphi_k$'s, the unique minimizer of $h$ is $y_{d,v,k}=\ell_{d}\theta^*_{dk}\phi_{kv}$, making it possible to derive explicit expressions for $\mu$ as in the case $D=1$.
For $K=2$, distinct topics are sufficient for identifiability.

\vspace{-0.4cm}
\paragraph{Full LDA.}
When the $\bphi_k$'s are unknown and equipped with Dirichlet priors, 
taking expectations and applying Stirling’s approximation in (\ref{eq:lda_posterior}), as above, gives a posterior of the MFGOU form with
\[h(\yy)=e(\yy)+g(\yy),\] 
where $e$ remains unchanged and
\begin{align*}
g(\yy) &= \sum_{k=1}^K \Big(-\sum_{d=1}^D (\delta_k + y_{d,\bullet,k})\log(\delta_k+y_{d,\bullet,k}) \\
&- \sum_{v=1}^V (\omega_v + y_{\bullet,v,k})\log(\omega_v+y_{\bullet,v,k}) \\
&+ (\omega_\bullet + y_{\bullet,\bullet,k})\log(\omega_\bullet + y_{\bullet,\bullet,k}) \Big).
\end{align*}
The Hessian \(\cG\) need not be negative semidefinite. 
Nevertheless, when \(D \geq2\), $\cG$ is not positive definite: 
Fix two documents \(d_1,d_2\), two vocabulary terms \(v_1,v_2\), and one topic coordinate \(k\), and let \(\xx\) be defined by 
\[ x_{d_1,v_1,k} = x_{d_2,v_2,k} = 1, \,\, x_{d_1,v_2,k} = x_{d_2,v_1,k} = -1, \] 
with all other coordinates equal to \(0\). All document-topic, word-topic, and global topic sums of \(\xx\) vanish. 
Since \(g\) depends on \(\yy\) only through these sums, \(g(\yy+t\xx)=g(\yy)\) for all sufficiently small \(t\). 
Hence 
\[\xx^\top\cG(\yy)\xx=0, \] 
and thus $\cG$ cannot be positive definite.
For $D=1$, this is even cleaner, since then 
$\cG(\yy)$ is the sum of a rank-1 matrix and a diagonal matrix with
negative diagonal coefficients.
Hence Corollary \ref{corollary:block_measure_under_concave_g} applies whenever the remaining assumptions of Theorem \ref{thm:projected_block_measure} are satisfied.


For simplicity, write $\Lambda(x)=x\log x$.
We will focus solely on the case $K=2$.
As usual in this case, write $y_{d,v}=y_{d,v,1}$ (so that $y_{d,v,2}=r_{d,v}-y_{d,v}$). Write $c_v=r_{\bullet,v}$ (so that $c_v$ is for words in the vocabulary what $\ell_d=r_{d,\bullet}$ is for documents). 
Then
\begin{align*}
    g(\yy) &= - \sum_d (\Lambda(\delta_1+y_{d,\bullet})+\Lambda(\delta_2+\ell_d-y_{d,\bullet})) \\
    &- \sum_v (\Lambda(\omega_v+y_{\bullet,v})+\Lambda(\omega_v+c_v-y_{\bullet,v})) \\
    &+ \Lambda(\omega_\bullet+y_{\bullet,\bullet})+\Lambda(\omega_\bullet+1-y_{\bullet,\bullet}).
\end{align*}

The case $D=1$ is special. 
Writing $y_v=y_{1,v}$ and $y_\bullet = y_{1,\bullet}$, the Hessian of $h$ becomes
\[\cH(\yy) = \mathrm{diag}(\bb)+a\one\one^\top,\]
where
\[b_v = b_v(\yy) =\frac{1}{y_v}+\frac{1}{c_v-y_v}-\frac{1}{\omega_v+y_v}-\frac{1}{\omega_v+c_v-y_v}\]
and
\[a= a(\yy) = \frac{1}{\omega_\bullet+y_\bullet}+\frac{1}{\omega_\bullet+1-y_\bullet}-\frac{1}{\delta_1+y_\bullet} - \frac{1}{\delta_2+1-y_\bullet}.\]
For $\delta_k=\omega_v=0$, corresponding to Dirichlet hyperparameters $\alpha_k$ and $\beta_v$ that are positive but independent of $n$, the limiting free energy is $h=0$. 
The projected collapsed Gibbs sampler is then a nearest-neighbor chain on a fixed-dimensional lattice domain of diameter $O(n)$ with no macroscopic drift at interior points. 
One expects a mixing time of $\Theta(n^2)$, but the process exhibits a slow-down near the boundary that needs a separate analysis.

When all $\delta_k$ and $\omega_v$ are positive, $\cH(\yy)$ is positive definite for all $\yy$, and hence $h$ satisfies the assumptions of Theorem \ref{thm:projected_block_measure}, whenever 
\[\inf_{\yy \in \prod_v(0,c_v)}\left(1+a(\yy)\sum_v\frac{1}{b_v(\yy)}\right) > 0, \]
giving a mixing time of order $n\log n$ when true.
This rather opaque condition can be shown to hold for example whenever
$\delta_1,\delta_2 \geq \omega_\bullet$.
Suppose that $\delta_1=\delta_2=\delta$ and that $h$ has a unique stationary point at which its Hessian is positive definite. 
Then by symmetry, the stationary point equals $\yy^*=\bm{c}/2$ and it is the unique minimum of $h$.
At $\yy^*$, we have 
\[ B=\frac14\operatorname{diag}(\mathbf c). \] 
Hence $B\mathcal H(\mathbf y^*)$ is similar to the symmetric matrix 
\[ B^{1/2}\mathcal H(\mathbf y^*)B^{1/2} = \operatorname{diag}\left(\frac{c_vb_v}{4}\right) + \frac{a}{4}\sqrt{\mathbf c}\sqrt{\mathbf c}^{\,\top}. \] 
Thus, by the matrix determinant lemma, its eigenvalues are characterized, up to eigenvalues inherited from repeated diagonal entries,
by the secular equation 
\[ 1+a\sum_{v=1}^V \frac{c_v}{c_vb_v-4\lambda}=0. \] 
Writing $\lambda_*$ for the smallest eigenvalue, we get, for every $\varphi\in(0,1)$, 
\[ \taum(\cY;\varphi) = \taum(\cZ;\varphi) = \frac{1}{2\lambda_*}n\log n+O(n). \]

The free energy $h$ frequently has a unique stationary point for $D=1$ even when $\cH$ is not positive definite everywhere, but examples when it does not are not hard to find.
A complete characterization in terms of the $\delta_k$ and $\omega_v$ for when uniqueness holds appears difficult.

For $D \geq 2$, the mixing time is perhaps generically exponential, see e.g. \cite{slow}, 
since $h$ may have several local minima that are not related solely by topic-label switching. 
However, if we assume that the LDA model is well-specified, i.e. there are probabilities $\theta^*_d$ and probability distributions $\phi^*_k$, $k=1,2$, over the vocabulary such that
\[r_{d,v}=r_{d,\bullet}(\theta^*_d\phi^*_{1v} + (1-\theta^*_d)\phi^*_{2v}),\]
the situation is different.
In this case, it can be shown that the system of equations $\nabla h(\yy)=\bz$ reduces to a two-dimensional problem, 
which for $\omega_v=\delta_k=0$ for all $v,k$ has a continuum of solutions forming a two-dimensional manifold. This phenomenon reflects the well-known nonidentifiability of nonnegative matrix factorization; 
there exists a two-dimensional family of parameter triples $(\theta,\phi_1,\phi_2)$ that give rise to the same $\mathbf{r}$.
This strongly suggests that the mixing time of $\cY$ is $\Theta(n^2)$ in this degenerate case.
Introducing positive hyperparameters $\delta_k$ and $\omega_v$ removes this degeneracy. In that case, there are strong reasons to conjecture that in generic situations 
(for example, when all $\theta_d$ and $\phi_{kv}$ are distinct), the free energy $h$ has either one or two isolated local minima. 
However, the system $\nabla h = \bz$ is highly nonlinear and analytically intractable, and we can only support this picture heuristically and numerically, even in relatively simple instances.
We now describe the mixing-time consequences of the conjectured stationary-point structure and then present numerical evidence for that structure.
Once the stationary-point structure of $h$ is understood, the mixing-time behavior follows directly from Theorems \ref{thm:main_theorem} and \ref{thm:exponential_mixing_time}.

Unlike in the Ising model, there is a fundamental distinction between the symmetric case $\delta_1=\delta_2$ and the non-symmetric case $\delta_1 \neq \delta_2$. 
In the symmetric case, any two local minima occur as mirror images under the symmetry $h(\yy)=h(\mathbf{r}-\yy)$, 
and the point $\yy=\tfrac{1}{2}\mathbf{r}$ is always stationary 
(being a saddle point for small hyperparameters and the unique minimizer for large ones). 
From the perspective of the underlying topic modeling problem, however, the two topics carry no intrinsic labels, and interchanging them does not alter the model. 
It is therefore natural to consider the lumped process obtained by identifying configurations that differ only by a global swap of topic labels. 
If the two symmetric minima are the only minima, then they become a single nondegenerate minimum under this identification, and the exponentially slow transition caused by label switching disappears. 
The central stationary point remains an unstable point of the quotient process, so Theorem \ref{thm:main_theorem} does not apply directly. 
Nevertheless, the process is expected to escape from this point on an $n\log n$ time scale and then contract toward the unique minimizer, strongly suggesting that the quotient process has mixing time of order $n\log n$.
On either side of the symmetry point $\bm{r}/2$, the quotient process may be represented by the original process restricted to that side. Theorem 1.1 therefore applies on compact subsets of the basin of either minimum that are bounded away from $\bm{r}/2$. 


In the non-symmetric case, the two topics are no longer interchangeable; the differing hyperparameters encode a genuine prior asymmetry between them. 
As a consequence, distinct local minima of $h$, when present, are not related by any symmetry, and there is no natural way to quotient them out. 
In this regime, multiple nondegenerate local minima therefore imply that the mixing times of both $\cY$ and $\cZ$ are exponential in $n$.
Of course, even in the symmetric case one may consider the unlumped process as a mathematical object in its own right. 
In that case, the chain will exhibit slow transitions between the symmetric modes, 
but this slow mixing is entirely due to label switching and disappears under the natural quotient.

\smallskip

Thus, in the full LDA setting, the main unresolved issue is the analytic description of the stationary points of $h$, rather than the mixing analysis itself; 
conditional on this landscape information, the conclusions for the unlumped process, and for the quotient process away from the unstable central point, follow from the general MFGOU theory.

Some numerical results follow. In all cases, we performed gradient descent from a large number of starting points, and the results consistently support the conjectured picture: there are either one or two minima.

\begin{example}
    The setting here is $D=V=2$ and
    \[\mathbf{r} = \left[\begin{matrix}
        0.15 & 0.35 \\ 0.3 & 0.2
    \end{matrix}\right].\]
    We fix $\omega_v=0.01$ for all $v$, set $\delta_1=\delta_2=\delta$ and vary $\delta$.
    The numerics reveal that the transition from two minima to a unique minimum at the central point occurs at $\delta_c \approx 0.562$. 
     
    Conditional on the landscape indicated by the numerics, for $\delta<\delta_c$, the mixing time of the unlumped process is exponential in $n$, 
    whereas for $\delta>\delta_c$, it equals 
    \[q_\delta n\log n+O(n)\]
    with 
    \[q_\delta=\frac{1}{2\lambda_1(B(\aaa^*)\cH(\aaa^*))}\] 
    and $\aaa^*=\tfrac{1}{2}\mathbf{r}$.
    For $\delta < \delta_c$, quotienting out the topic labels gives a lumped process with mixing time $q_\delta n\log n + O(n)$,
    provided that the process starts at a fixed positive distance from $\tfrac{1}{2}\bm{r}$. 
    The expression for $q_\delta$ remains the same, but now with $\aaa^*$ replaced by either of the two local minima.

Figure \ref{fig:free_energy_various_delta}
illustrates how the free energy changes with $\delta$, showing in particular the transition from two local minima for small $\delta$ to a single minimum for larger $\delta$.
Table \ref{table:q_delta} shows $q_\delta$ for various $\delta$.

\smallskip

    \begin{figure}
   \begin{center}
   \includegraphics[trim = 0mm 0mm 0mm 0mm, clip, width=1.0\textwidth]{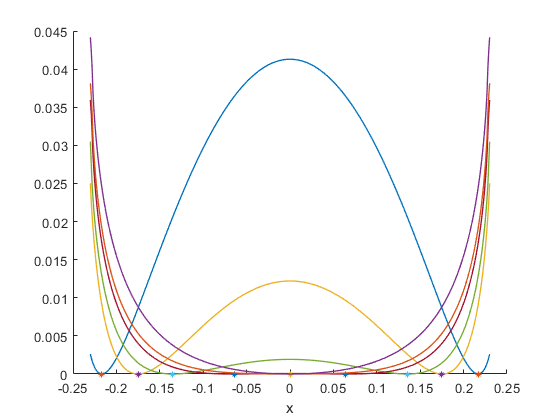}
   \caption{The free energy $h-h_{min}$ evaluated along a one-dimensional slice through the local minima and the central point, for various $\delta$ in the $D=V=2$ case, illustrating the transition from two minima to a single minimum as $\delta$ increases.
   The values used for $\delta$ are $0.001$, $0.1$, $0.3$, $0.5$, $0.6$ and $1$, progressing from $\delta=0.001$ in blue to $\delta=1$ in purple. Respective local minima are marked with asterisks. Recall that $\delta_c \approx 0.56$.} \label{fig:free_energy_various_delta}
   \end{center}
   \end{figure}

\begin{table}[ht]
\centering
\setlength{\tabcolsep}{5pt}
\small
\begin{tabular}{r rrrrrrrrrr}
\toprule
$\delta$ & $0.001$ & $0.01$ & $0.1$ & $0.3$ & $0.5$ & $0.55$ & $0.56\,\,|$ & $0.57$ & $0.6$ & $1$ \\
$q_\delta$ & $29.3$ & $7.9$ & $4.28$ & $10.5$ & $61.5$ & $460$ & $2361\,\,|$ & $610$ & $189$ & $29.6$ \\
\bottomrule
\end{tabular}
\caption{Values of the mixing time constant $q_\delta$ for different values of $\delta$. 
Above $\delta_c$, this is the mixing-time constant of both the projected process and the full Gibbs sampler. 
Below $\delta_c$, it is the mixing-time constant of the quotient process for initial states bounded away from the unstable central point. The vertical bar marks the transition between these two regimes.}
\label{table:q_delta}
\normalsize
\end{table}

\end{example}

A few examples of higher values of $D$ and $V$ for well-specified LDA have also been evaluated numerically, and they support the same picture: two symmetric local minima for $\delta<\delta_c$ and
a single minimum for $\delta \geq \delta_c$ for some critical $\delta_c>0$.

\subsection{Stochastic gradient descent} \label{subsec:SGD}

  Consider a statistical model of the following form
  \[y=\varphi(\xx^T\btheta)+\ep,\]
  where $\xx \in \RR^p$ is a feature vector associated with the observation $y$, $\btheta \in \bTheta$ is a parameter vector and $\ep$ is an error term that has zero mean and finite variance $\si^2$, independent of $\xx$. 
  Given data $(\xx_i,y_i)$, $i=1,\ldots,D$, one wants to optimize $\btheta$ based on these data. 
  To estimate $\btheta$, one specifies a loss function, 
  $\ell:\bTheta \times \RR^p \times \RR \ra \RR$, assumed to be continuously differentiable in $\theta$, with $\nabla_{\btheta}\ell(\btheta,\xx,y)$ jointly continuous in $(\btheta,\xx,y)$.
  For an observation $(\xx_i,y_i)$, the quantity $\ell(\btheta;\xx_i,y_i)$ is the corresponding loss. 
  From this, one derives the total loss
  \[\Leb(\btheta):= \frac1D\sum_{i=1}^{D}\ell(\btheta;\xx_i,y_i),\]
  which is then the minimization objective.
  
  Usually explicit minimization is difficult, and some numerical method must be employed. Among such methods, gradient descent is arguably the most well-known. 
  The method is iterative: starting from an initial value $\btheta_0$, one updates according to
  \[\btheta_{t+1} = \btheta_t-\eta\nabla_{\btheta} \Leb(\btheta_t),\]
  where the learning rate $\eta>0$ is a small fixed positive number. 
  This recursion is then run for a sufficiently long time. 
  
  However, in most modern machine learning applications, the data set and the dimension of $\btheta$ are so large that it is too costly to compute the full gradient for each step. One standard remedy is instead to use stochastic gradient descent (SGD), where a small positive integer $k$ is fixed and at each step of the recursion, one picks a mini-batch, i.e., a uniformly chosen random subset $S \subset [D]$ of size $k$, and updates according to
  \[\btheta_{t+1} = \btheta_t-\eta\nabla_{\btheta} \Leb^S(\btheta_t),\]
  where $\Leb^S(\btheta) = \frac1k\sum_{i \in S}\ell(\btheta;x_i,y_i)$. Often $k$ is simply set to be $1$, which is the case we will consider here. 
  Let $\cU=\{\bU_t\}:=\{\btheta_t\}_{t=0}^\iy$ denote the resulting process. 
  Then $\cU$ forms a Markov process on $\bTheta$ and under suitable recurrence assumptions (e.g.\ if $\bTheta$ is bounded) admits a stationary distribution concentrated around $\hat{\btheta}:=\text{argmin}\Leb(\btheta)$. 
  Constant-step-size SGD has previously been studied from a Markov-chain perspective by Dieuleveut, Durmus and Bach \cite{DieuleveutDurmusBach2020}. 
  More recently, Merad and Ga\"iffas \cite{MeradGaiffas2025} established convergence to an invariant distribution in total variation and Wasserstein distance under strong convexity and regularity assumptions. 
  Related total-variation mixing bounds for projected Langevin iterations, including nonsmooth and weakly smooth potentials, were obtained by Bravo, Flores-Mella and Guzmán \cite{BravoFloresMellaGuzman2025}.
  Our purpose here is complementary: the MFGOU framework identifies the leading total-variation mixing-time asymptotics and their dependence on the Hessian at the minimum.

  The step size of $\cU$ is $-\eta \nabla_{\btheta} \ell(\btheta;\bX,Y)$, where $(\bX,Y)$ is chosen according to the empirical distribution
  of the data.
  If $D$ is large, then this is close to the true underlying data generating distribution. 
  We therefore consider the idealized infinite-data version, in which $(\bX,Y)$ is sampled independently from the data-generating distribution at each step.
  For the remainder of the general discussion, assume that $(X,Y)$ has bounded support.
  Let
  \[g(\btheta)=\E[\ell(\btheta;\bX,Y)],\]
  where the expectation is with respect to the random choice of $(\bX,Y)$, and assume that $g$ is three times continuously differentiable and that differentiation under the expectation is justified, so that
  \[\nabla g(\btheta) = \E[\nabla_{\btheta}\ell(\btheta;\bX,Y)].\]
  Then
  \[\E[\bU_{t+1}-\bU_t|\bU_t=\btheta] = -\eta \nabla g(\btheta).\]
  Set $\eta=\tfrac{1}{2n}$.
  The resulting increment variables then become
  \[\bxi(\btheta) = -\frac{1}{2}\nabla_{\btheta} \ell(\btheta;\bX,Y),\]
  whose law is independent of $n$. Under suitable local boundedness, regularity, and nondegeneracy assumptions on this law, 
  the resulting processes have the local structure of an MFGOU family with $\ff(\btheta)=\nabla g(\btheta)$.
  
  However, if, as is commonly the case, $\nabla \ell$ is unbounded globally in $\btheta$, then the common compact support assumption of the $\bxi(\btheta)$'s fails.
  On the other hand, if we restrict attention to a compact domain $K_2 \subseteq \RR^p$,
  joint continuity of $\nabla \ell$ and compact support of $(\bX,Y)$ give a common compact support for $\bxi(\btheta)$, $\btheta \in K_2$.
  
  Suppose that $K_0,K_1,K_2$ are compact sublevel sets of $g$ such that $K_0 \subset K_1^\circ \subset K_1 \subset K_2^\circ$, and assume that
  $\nabla g$ only vanishes at a unique minimizer $\btheta^*$ of $g$, the Hessian $\cG(\btheta^*)=\nabla^2 g(\btheta^*)$ is positive definite, and
   \[\n\nabla g(\btheta)\n^2 \geq c((g(\btheta)-g(\btheta^*)) \wedge 1)\]
  holds for some $c>0$ (which holds e.g.\ if $\n\nabla g\n$ is bounded away from $0$ on all $B(\btheta^*,\kappa)^c$).
  Then if $\bU_0 \in K_0$, Lemma \ref{lemma:to_small_box_in_linear_time} shows that the process whp stays in $K_1$ for an exponential time, and the censoring mechanism of Section \ref{subsec:to_small_box_in_linear_time} applies with $V(\btheta)=g(\btheta)-g(\btheta^*)$.
  Applying the censoring construction to each $\cU^{(n)}$, the resulting processes $\cU^{(n,r)}$, $n \in \cN$, confined to $K_2,$ form an MFGOU family.
  Since $\cU$ and $\cU^{(r)}$ agree until $\cU$ leaves $K_1$, an event that has exponentially small probability on the relevant time scale, we may identify the two processes for the purpose of estimating the mixing time.
  This is also a natural restriction in practice, where the initial parameter values are confined to some prescribed bounded region, and fixed-learning-rate SGD may itself be unstable when stochastic gradients grow too rapidly at infinity.
  
  The Lyapunov hypotheses of Theorem \ref{thm:main_theorem} hold by Remark \ref{remark:various_things_on_MFGOU_assumptions}(i), and it follows that 
  \[\taum(\cU;\ep,K_0)=qn\log n+O(n) = \frac{q}{2\eta}\log(\eta^{-1})+O(\eta^{-1})\]
  with $q=1/\lambda_1(\cG(\btheta^*))$.
  
  If $g$ has an additional nondegenerate local minimum, the mixing time is exponential.
  In relation to this, metastability and escape-time behavior for nonconvex SGD have been studied through diffusion approximations by Hu et al.\ \cite{HuLiLiLiu2019}.
  
  \smallskip

  \begin{example}
      {\bf Linear and robust regression:} Consider the standard multiple linear regression model without an intercept
      \[y=\xx^\top\bbeta^*+\ep\]
      where $\ep$ is a symmetric absolutely continuous error term independent of $\bX$. 
      Assume in addition that 
      \begin{itemize}
          \item $\bX$ is absolutely continuous, 
          \item $\ep$ has compact support (which in the well-specified case follows from the compact support of $(\bX,Y)$), 
          \item the densities of $\ep$ and $\bX$ are bounded away from $0$ in a neighborhood of $\bz$ (in $\RR$ and $\RR^p$, respectively),
          \item the law of $\ep$ is Lipschitz under translation in total variation (which holds, for example, whenever $\ep$ has a density of bounded variation).
      \end{itemize}

      Define the loss function
      \[\ell(\bbeta;\xx,y) = \rho(y-\xx^\top\bbeta)\]
      and $g(\bbeta) = \E[\rho(Y-\bX^\top\bbeta)]$, where $\rho(x)$
      is an even, nonnegative, continuously differentiable function that is increasing in $|x|$ and satisfies $\rho'\neq 0$ almost everywhere.
      
      Write $\bbeta^*$ for the minimizer of $g$, so that $Y=\bX^\top\bbeta^*+\ep$ whenever the model is well-specified, and assume that $g$ is $C^3$. 
      Assume that $g$ is unimodal with $\n\nabla g\n$ bounded away from $0$ as above and $\nabla^2g(\bbeta^*)$ positive definite. We next verify that the stochastic-gradient increments satisfy the local regularity conditions of the MFGOU framework. Note that the SGD increments in this application are
      \[\bxi(\bbeta) = \frac{1}{2}\bX\rho'(\ep+\bX^\top(\bbeta^*-\bbeta)).\]
      \begin{itemize}
      \item Since $(\bX,\ep)$ is absolutely continuous, so is $(\bX,\ep+\bX^\top(\bbeta^*-\bbeta))$.
      Conditional on $\ep+\bX^\top(\bbeta^*-\bbeta)=r$, the law of the increments $\bxi(\bbeta)=\tfrac{1}{2}\bX\rho'(r)$
      is absolutely continuous for almost every $r$, and hence so is the unconditional law of $\xi(\bbeta)$.
      
      \item Since $\bX$ and $\ep$ have compact support, the increments have common compact support for $\bbeta$ in any compact domain.
      \item Since the law of $\ep$ is Lipschitz under translation in total variation, the law of $\bxi(\bbeta)$ is Lipschitz in total variation as a function of $\bbeta$.
      \item Since the law at $\bbeta^*$ is symmetric and the densities of $\ep$ and $\bX$ are bounded away from $0$ around $\bz$, a common symmetric absolutely continuous component of uniformly positive mass can be extracted in a neighborhood of $\bbeta^*$.
      \end{itemize}
      Consequently, the processes have the local structure required of an MFGOU family, and the corresponding censored processes form an MFGOU family. Hence, for the initial region $K_0$ considered above, 
      \[\taum(\cU;\varphi,K_0)=\frac{q}{2\eta}\log(\eta^{-1})+O(\eta^{-1})\] 
      for every $0<\varphi<1$, where $q^{-1} = \lambda_1(\nabla^2 g(\bbeta^*))$.

      For concrete results, consider $\ep \sim N(0,\si^2)$ and $\bX \sim N(0,\Sigma)$ and the loss function $\rho(x)=|x|^c$ for some $c \in (1,2]$.
      Then 
      \[g(\bbeta)=\E[|\ep+\bX^\top(\bbeta^*-\bbeta)|^c] = (\si^2+(\bbeta^*-\bbeta)^\top\Sigma(\bbeta^*-\bbeta))^{c/2}\mu_c,\]
      where $\mu_c=\E[|Z|^c]$ for a standard normal $Z$.
      Differentiating twice and setting $\bbeta=\bbeta^*$ gives
      \[\nabla^{2}g(\bbeta^*) = c\mu_c\si^{c-2}\Sigma.\]
      We cannot apply the MFGOU mixing-time result directly, since $\ep$ and $\bX$ have unbounded support. However, let $\ep_R$ and $\bX_R$ have the
      respective conditional laws of $\epsilon$ given $|\epsilon|\leq R$ and of $\bX$ given $\n\bX\n \le R$. 
      Let $g_R$ denote the corresponding expected loss. Dominated convergence then gives
      \[\lim_{R \ra \iy} \nabla^2 g_R(\bbeta^*) = \nabla^2 g(\bbeta^*).\]
      For each fixed $R$, the corresponding censored SGD processes form an MFGOU family.
      It follows that
      \[\taum(\cU_R;\varphi,K_0) = \frac{q_R}{2\eta}\log (\eta^{-1}) + O_R(\eta^{-1}),\]
      where
      \[\lim_{R \ra \iy}q_R = \frac{\sigma^{2-c}}{c\mu_c \lambda_1(\Sigma)}.\]
      
      The main reason for considering losses other than squared loss, however, is to perform robust regression in settings where the model may be misspecified or the data may contain outliers.
      In such settings, one typically seeks a loss that is more forgiving of a few large residuals than squared loss, which penalizes such observations heavily.
      Examples covered by the preceding discussion include the $|x|^c$-loss, $1 < c\leq 2$, Huber-type losses, and Welsch loss. 
     The absolute loss $\rho(x)=|x|$ is not differentiable at zero. 
     This causes no immediate difficulty when the residual has an absolutely continuous distribution, 
     since the stochastic gradient then exists almost surely. 
     What is required for the MFGOU analysis is sufficient smoothness of the expected loss $g$. 
     Redescending losses such as Tukey’s loss may instead produce an atom at zero in the stochastic-gradient law, in which case the semicontinuous decomposition must be verified separately.

      The well-specified representation is not essential. In a misspecified model, the argument continues to apply with $\bbeta^*$ defined as the minimizer of the population loss, provided that the stochastic-gradient laws satisfy the same local assumptions near $\bbeta^*$.

      Unfortunately, the Hessian of $g$ at $\bbeta^*$ can rarely be evaluated analytically except in specially constructed examples, and even then substantial work may be required.
      One can easily construct examples exhibiting phase transitions between fast and slow mixing as hyperparameters vary, but we do not pursue such examples here.
      
  \end{example}

  \begin{example}
      Consider logistic regression without intercept, where the $y_i$'s are binary and 
      \[\Pro(Y=1|\mathbf{X}=\xx) = 1-\Pro(Y=0|\bX=\xx) = \sigma(\xx^T\bbeta^*):=\frac{1}{1+e^{-\xx^T\bbeta^*}}.\]
      The loss function is taken to be negative log-likelihood, i.e.,
      \[\ell(\bbeta;\xx_i,y_i)= y_i\log(1+e^{-\xx_i^T\bbeta})+(1-y_i)\log(1+e^{\xx_i^T\bbeta}).\]
      The loss is convex in $\bbeta$, and if the second-moment matrix $\Sigma:=\E[\bX\bX^\top]$ is positive definite, then the expected Hessian $\nabla^2 g(\bbeta)$ is positive definite for every $\bbeta$.
      Hence $g$ is strictly convex, and assuming that a minimizer $\bbeta^*$ exists, it is unique,
      The corresponding SGD increments are
      \[\bxi(\bbeta)=\frac{1}{2}\bX(Y-\sigma(\bX^\top\bbeta)).\]
      Assume that $\bX$ is absolutely continuous, with density bounded away from zero on a neighborhood of the origin. 
      The lower bound on the density near the origin ensures that the increment laws contain a common symmetric absolutely continuous component of uniformly positive mass.
      Assume also that, for $\bbeta$ in a neighborhood of $\bbeta^*$, the SGD increment laws are Lipschitz in total variation as functions of $\bbeta$.
      Under these assumptions, the corresponding censored processes form an MFGOU family on $K_2$.
      
      Computing the first and second derivatives, and using dominated convergence to differentiate under the expectation, gives
      \[\nabla^2 g(\bbeta^*) = \E\left[\frac{\bX\bX^T}{(1+e^{-\bX^T\bbeta^*})(1+e^{\bX^T\bbeta^*})}\right].\]
      The mixing-time result applies with 
      \[ \tau_{\mathrm{mix}}(\cU;\varphi,K_0) = \frac{1}{2\eta\lambda_1(\nabla^2 g(\bbeta^*))} \log(\eta^{-1}) + O(\eta^{-1}). \]
      In general, $\nabla^2 g(\bbeta^*)$ cannot be computed explicitly. 
      However, in the special case of $\bbeta^*=\bz$, the denominator equals $4$ and hence $\nabla^2 g(\bbeta^*)=\frac{1}{4}\Sigma$ and this simplifies to
      \[ \tau_{\mathrm{mix}}(\cU;\varphi,K_0) = \frac{2}{\eta\lambda_1(\Sigma)} \log(\eta^{-1}) + O(\eta^{-1}). \]
      For a concrete example, suppose that $\bbeta^*=\bz$ and the model is well-specified, i.e.\ $Y$ is an independent fair coin flip. 
      Then the increment law at the minimizer is the symmetric mixture of $\bX/4$ and $-\bX/4$. 
      Thus, if $\bX$ has compact support and a sufficiently regular density bounded away from zero near the origin, the local continuous MFGOU conditions hold.
    \end{example}

    \subsection{Hamiltonian Monte Carlo} \label{subsec:HMC}

    Hamiltonian Monte Carlo is a method for sampling from high-dimensional continuous distributions
      \[H(\xx) \propto e^{-nh(\xx)},\, \xx \in \RR^m,\]
      where $n$ is a large number and $h:\RR^m \ra [0,\iy)$. 
      A typical example is a Bayesian posterior based on $n$ observations, in which case $nh$ is the sum of the negative log-likelihood and the negative log-prior.
      
      We assume that $h$ is three times continuously differentiable and unimodal, that $\nabla h$ vanishes only at the minimum
      and is bounded away from zero on the complement of every neighborhood of the minimum, and that the Hessian of $h$ at the minimum is positive definite.
      Without loss of generality, we assume that $h$ is minimized at the origin and $h(\bz)=0$.
      
     When $m$ is large, HMC can mix faster than corresponding random-walk Metropolis--Hastings or Gibbs samplers for unimodal targets. Although rigorous mixing-time results are available in several important settings, they mostly concern asymptotic regimes different from the one considered here.

      Without going into the details of the physics that inspires it, let us now define HMC. 
      The idea is to sample from the $2m$-variate product density 
      \[\psi(\xx,\yy)=e^{-nh(\xx) - \yy^T\yy/2},\]
      which is, up to normalization, the joint density of $(\bX,\bY)$, where $\bX \sim H$ and $\bY \sim N(\bz,I_m)$ is independent of $\bX$. 
      The auxiliary process $\cY=(\bY(t))_{t \geq 0}$ plays the role of a Hamiltonian momentum. Concretely, HMC goes as follows: begin by choosing a starting position $\bX(0)$ and $\bZ_0 \sim N(\bz,I_m)$. Then follow the differential equations
\[\bX'(t)=\bY(t),\, \bY'(t) = -n\nabla h(\bX(t)),\, \bY(0)=\bZ_0\]
up to some predefined time $T$.
After that, reset $\cY$ to a new independent $\bZ_1 \sim N(0,I_m)$ and follow the differential equations for another time period of length $T$, starting from $(\bX(T),\bZ_1)$.
Then iterate this procedure for the desired number of steps.

We restrict attention to the frequent-refreshment regime, in which the interval between momentum refreshments is of order $1/n$.

In practice, this system of ODEs generally cannot be solved explicitly, so a numerical integrator must be used. The standard numerical integrator for HMC is the leapfrog integrator. We consider a single leapfrog step between momentum refreshments and do not apply a Metropolis correction for the discretization error.
In the present frequent-refreshment setting, the time between refreshments is sufficiently short that a single leapfrog step closely approximates the exact Hamiltonian motion over that interval.

Quantitative total-variation mixing bounds for unadjusted HMC, together with bounds on the discrepancy between its invariant distribution and the target distribution, were obtained by Bou-Rabee and Eberle \cite{BouRabeeEberle2023}. 
Their work primarily addresses dependence on dimension, accuracy and discretization.
Rapid-mixing bounds for ideal HMC and its leapfrog implementation on strongly log-concave targets were obtained by Mangoubi and Smith \cite{MangoubiSmith2021,MangoubiSmith2019}. 
Their results focus on dimension and condition-number dependence for trajectories of appreciable length.
The regime considered here is different: the dimension is fixed, the target becomes increasingly concentrated as $n$ grows, and the momentum is refreshed after a single leapfrog step of length $a/n$. Our aim is to identify the leading mixing-time asymptotics in this regime.
The leapfrog algorithm is carried out as
\begin{align*}
    & \bY(T/2) = \bZ_0-\frac{nT}{2}\nabla h(\bX(0)), \\
    & \bX(T) = \bX(0)+T\bY(T/2), \\
    & \bY(T) = \bY(T/2) - \frac{nT}{2} \nabla h(\bX(T)).
\end{align*}
For the $\cX$ process, this gives simply that
\[\bX(T) = \bX(0)-\frac{nT^2}{2}\nabla h(\bX(0))+T\bZ_0.\]
Taking $T=a/n$ for a constant $a$, this reads 
\[\bX(a/n) = \bX(0)-\frac{a^2}{2n}\nabla h(\bX(0))+\frac{a}{n}\bZ_0.\]
Defining the discrete-time process $\cU=\{\bU_k\}_{k=0}^\iy$ by $\bU_k=\bX(ka/n)$ then gives
\[\bU_{k+1}=\bU_k-\frac{a^2}{2n}\nabla h(\bU_k)+\frac{a}{n}\bZ_k.\]
Thus, after one leapfrog step and momentum refreshment, the position chain takes the form of an Euler-type Langevin recursion. We nevertheless retain the HMC terminology to emphasize its derivation from the leapfrog dynamics.

The resulting processes do not quite form an MFGOU family according to the definition, since $\nabla h$ may be unbounded and since the Gaussian random variables $\bZ_k$ have unbounded support. 
For the second of these problems, we replace the $\bZ_k$'s with their truncated versions $\bZ_{R,k}$ with the conditional law of $\bZ$ given $\n\bZ\n \leq R$ for an arbitrary fixed $R<\iy$, 
and consider the resulting process $\cU_R$.
For each fixed $R$, the truncated Gaussian law is symmetric, absolutely continuous, independent of $n$,
and Lipschitz under translation in total variation.
Since the increment law is a smoothly varying translate of the truncated Gaussian law and is symmetric at the origin, 
a fixed common symmetric absolutely continuous component of uniformly positive mass can be extracted in a neighborhood of the origin.
Hence the local continuous MFGOU regularity conditions hold in a neighborhood of the origin.

For the first obstacle, we apply the censoring construction of Section \ref{subsec:to_small_box_in_linear_time} on an arbitrarily large compact sublevel set $K_2$ of $h$. 
This allows $\cU_R$, started in a compact $K_0\subset K_2^\circ$, to be treated as an MFGOU process confined to $K_2$, up to an exponentially small error on the relevant time scale.

With drift field $\ff(\uu)=a^2 \nabla h(\uu)$, 
and using Remark \ref{remark:various_things_on_MFGOU_assumptions} (i) to confirm the Lyapunov conditions,
Theorem \ref{thm:main_theorem} yields
\[\taum(\cU_R;\ep,K_0) = \frac{1}{a^2\lambda_{1}(\cH)}n \log n + O_R(n).\]
The mixing time here is with respect to the stationary distribution of the truncated unadjusted chain $\cU_R$, 
which is not the target distribution proportional to $e^{-nh}$. 
For fixed $R$, truncation changes the covariance of the noise and may therefore produce a discrepancy on the equilibrium scale, but this discrepancy is expected to vanish as $R \to \iy$. 
The remaining discretization bias of the untruncated chain is separate, and, in the quadratic case, is of relative order $O(1/n)$.
The result concerns mixing to the stationary distribution of $\cU_R$, not approximation of the target distribution.

We expect that allowing Gaussian increments, despite their unbounded support, 
would require only minor modifications of the MFGOU analysis, particularly in the present setting. 
Over the relevant time scale, the probability that the norm of any increment exceeds $\log n/n$ is superpolynomially small. 
We therefore expect the same leading mixing-time asymptotics for the untruncated process, 
though possibly with the lower-order $O(n)$ term replaced by $o(n\log n)$.

\begin{remark} \label{remark:m=m(n)}
    {\rm In the quadratic case $h(\xx)=\tfrac{1}{2}\xx^T\cH\xx$, the recursion
\[\bU_{k+1}=\left(I-\frac{a^2}{2n}\cH\right)\bU_k+\frac{a}{n}\bZ_k,\]
$\bZ_k \sim N(\bz,I_m)$,
is explicitly solvable. In particular, one can compute the covariance structure exactly and analyze the dependence of the mixing time on both the dimension $m$ and the initial state. 
When $m=m(n)$ grows with $n$, this yields an additional logarithmic dependence on $m$, suggesting mixing times of order $n\log(mn)$. 
Although this observation arises here in the context of HMC, it indicates a natural direction for the general MFGOU theory: allowing $m=m(n)$. 
The quadratic case suggests that such an extension should remain valid whenever the nonlinear drift is sufficiently close to its linearization near equilibrium, although quantifying the required degree of closeness appears to require significant care.}
\end{remark}

\section{Discussion and open problems} \label{sec:discussion_and_open_problems}

The results of this paper leave several natural directions for further work. The most obvious one concerns the landscape of the free energy for full LDA. 
The MFGOU theory reduces the mixing analysis to the stationary-point structure of this landscape, but a complete analytic description of that structure remains open. 
When the Dirichlet hyperparameters are symmetric, we conjecture, subject to suitable identifiability and nondegeneracy conditions, that the following are equivalent for sufficiently small hyperparameters: the only local minima are two minima related by topic-label switching, and the observed word-frequency matrix admits a genuine underlying LDA factorization in terms of document-specific topic weights and two topic-word distributions.
It would be valuable to formulate and prove this conjecture precisely, to describe how the landscape changes as the hyperparameters increase, 
and to establish the mixing time of the quotient process when the central stationary point is unstable. 
Such results would complete the mixing-time analysis of the full LDA models considered here.

A second direction is to allow the dimension $m$ of the MFGOU process to grow with $n$. The present theory keeps $m$ fixed, and its constants may depend on $m$. 
As observed in Remark \ref{remark:m=m(n)}, the quadratic case can be analyzed explicitly and suggests an additional logarithmic dependence on the dimension. 
Closely related high-dimensional cutoff phenomena for Ornstein--Uhlenbeck processes have been analyzed by Boursier, Chafa\"i and Labb\'e \cite{BoursierChafaiLabbe2023}.
A growing-dimensional theory would require uniform versions of the variance, mean-tracking and coupling estimates, together with suitable control of the nonlinear remainder. 
Determining conditions under which the fixed-dimensional mixing-time formula remains valid, possibly with an additional dependence on $m$, is a natural next step. 

A more ambitious project concerns degenerate stationary points of the drift field. 
The present theory treats nondegenerate attracting zeros of the drift and gives exponentially slow mixing when there is more than one such attractor. 
At a critical parameter value, however, the Jacobian may have zero eigenvalues, and the leading restoring force is then determined by higher-order terms. 
One should consequently expect mixing-time scales different from $n\log n$, depending on the order and geometry of the degeneracy. 
A general theory covering higher-order minima, saddles and unstable stationary points would encompass critical phenomena in mean-field spin systems, 
escape from the unstable central point in quotient LDA, and analogous transitions in stochastic optimization. 
Developing such a theory appears to require substantially different techniques from those used here.

\bigskip

\end{document}